\setlist[enumerate]{labelindent=-.5in,leftmargin=0pt}
\setlist[itemize]{labelindent=-.5in,leftmargin=0pt}
\definecolor{green}{RGB}{0,127,0}
\definecolor{red}{RGB}{191,0,0}
\newcommand{\B}[1]{{\mathbf #1}}
\def\@settitle{\begin{center}%
  \baselineskip14\p@\relax
    {\Large\textit \@title}
  \end{center}%
}
\def\@setauthors{%
  \begingroup
  \def\thanks{\protect\thanks@warning}%
  \trivlist
  \centering\footnotesize \@topsep30\p@\relax
  \advance\@topsep by -\baselineskip
  \item\relax
  \author@andify\authors
  \def\\{\protect\linebreak}%
  {\scshape \authors}%
  \ifx\@empty\contribs
  \else
    ,\penalty-3 \space \@setcontribs
    \@closetoccontribs
  \fi
  \endtrivlist
  \endgroup
}
\newtheorem{theorem}[subsection]{Theorem}
\newtheorem{corollary}[subsection]{Corollary}
\newtheorem{lemma}[subsection]{Lemma}
\newtheorem{proposition}[subsection]{Proposition}
\theoremstyle{definition}
\newtheorem{definition}[subsection]{Definition}
\newtheorem{example}[subsection]{Example}
\newtheorem{examples}[subsection]{Examples}
\theoremstyle{remark}
\newtheorem{remark}[subsection]{Remark}
\numberwithin{figure}{section}
\numberwithin{table}{section}
\newcommand{\OP}{\operatorname}
\begin{document}

\title[Biinvariant metrics]{The cancelation norm and the geometry of biinvariant word metrics}
\author[Brandenbursky]{Michael Brandenbursky}
\address[M. Brandenbursky]{CRM, University of Montreal, Canada}
\email{michael.brandenbursky@mcgill.ca}
\author[Gal]{\'Swiatos\l aw R. Gal}
\address[S.R. Gal]{Uniwersytet Wroc\l awski}
\email{sgal@math.uni.wroc.pl}
\author[K\k{e}dra]{Jarek K\k{e}dra}
\address[J. K\k{e}dra]{University of Aberdeen and University of Szczecin}
\email{kedra@abdn.ac.uk}
\author[Marcinkowski]{Micha{\l} Marcinkowski}
\address[M. Marcinkowski]{Uniwersytet Wroc\l awski}
\email{marcinkow@math.uni.wroc.pl}
\thanks{M.M. was partially supported by a scholarship from the Polish Science Foundation. SG and MM were partially supported by Polish National Science Center (NCN) grant 2012/06/A/ST1/00259}

\begin{abstract}
We study biinvariant word metrics on groups. We provide
an efficient algorithm for computing the biinvariant word
norm on a finitely generated free group and we construct an isometric embedding
of a locally compact tree into the biinvariant Cayley
graph of a nonabelian free group. We investigate the geometry
of cyclic subgroups.
We observe that in many classes of groups cyclic subgroups
are either bounded or detected by homogeneous quasimorphisms.
We call this property the bq-dichotomy and we prove it for
many classes of groups of geometric origin.
\end{abstract}

\maketitle
\section{Introduction} \label{S:intro}

The main object of study in the present paper are biinvariant
word metrics on normally finitely generated groups. Let us recall definitions.
Let $G$ be a group generated by a symmetric set
$S\subset G$. Let $\overline{S}$ denote the
the smallest conjugation invariant subset of $G$
containing the set $S$. The word norm of an element
$g\in G$  associated with the sets $S$ and $\overline{S}$
is denoted by $|g|$ and $\|g\|$ respectively:
\begin{align*}
|g|&:=\min\{k\in \B N\,|\,g=s_1\cdots s_k, {\text{ where }} s_i\in S\},\\
\|g\|&:=
\min\{k\in \B N\,|\,g=s_1\cdots s_k, {\text{ where }} s_i\in \overline{S}\}.
\end{align*}

The latter norm is conjugation invariant and defined if $G$ is generated by
$\overline{S}$ but not necessarily by $S$. If $S$ is finite and $G$ is
generated by $\overline{S}$ then we say that $G$ is {\em normally finitely
generated}. This holds, for example, when $G$ is a simple group and
$S=\{g^{\pm 1}\}$ and $g\neq \OP{1}_G$.  Another example is the infinite braid
group $B_{\infty}$ which is normally generated by one element twisting the two
first strands.

\begin{remark}
The metric associated with the conjugation invariant norm
is defined by ${\bf d}(g,h):=\|gh^{-1}\|$. It is
biinvariant in the sense that both left and right actions
of $G$ on itself are by isometries. We focus in the paper
exclusively on both conjugation invariant word norms and
associated with them biinvariant metrics. Most of the arguments
and computations are done for norms.
\end{remark}

Since invariant sets are in general infinite,
the corresponding word norms are not considered by the classical
geometric group theory. The motivation for studying such norms
comes from geometry and topology because transformation groups
of manifolds often carry naturally defined conjugation
invariant norms. The examples include the Hofer norm and the autonomous
norm in symplectic geometry, fragmentation norms and the volume of the
support norm in differential geometry and others, see for example
\cite{MR2900661,BK,MR2390536,MR2866929,Hofer,Kotschick,LM}.

Biinvariant word metrics are at present not well understood.
It is known that for some nonuniform lattices in semisimple
Lie groups (e.g. $\OP{SL}(n,\B Z)$, $n\geq 3$)  biinvariant
metrics are bounded \cite{BIP,MR2819193}. In general, the
problem of understanding the
biinvariant geometry of lattices in higher rank semisimple
Lie groups is widely open.

The main tool for proving unboundedness of biinvariant word
metrics are homogeneous quasimorphisms. Thus if a group admits
a homogeneous quasimorphism that is bounded on a conjugation
invariant generating set then the group is automatically unbounded
with respect to the biinvariant word metric associated with
this set. Examples include hyperbolic groups and groups of
Hamiltonian diffeomorphisms of surfaces equipped with
autonomous or fragmentation metrics \cite{B,BK,EF}.
If a group $G$ is biinvariantly unbounded
it is interesting to understand what metric spaces can
be quasiisometrically embedded into $G$.

Before we discuss the content of the paper in greater detail
let us recall a basic property of biinvariant word metrics
on normally finitely generated groups.

\subsection*{Lipschitz properties of conjugation invariant norms
on nor\-mally fi\-ni\-te\-ly generated groups}\label{S:bi}

If a group $\Gamma$ is  normally finitely
generated then every homomorphism $\Psi\colon \Gamma\to G$ is Lipschitz
with respect to the norm $\|\circ\|$ on $\Gamma$ and any conjugation
invariant norm on $G$.
In particular, two choices of such a finite set $S$ produce
Lipschitz equivalent metrics, so in this case we will refer to
{\em the} word metric on a normally finitely generated group.
Also, such a metric is maximal among biinvariant metrics.

\subsection*{The cancelation norm}
Let $G$ be a group generated by a symmetric set $S$ and
let $w$ be a word in the alphabet $S$. The cancelation length
$|w|_{\times}$ is defined to be the least number of letters
to be deleted from $w$ in order to obtain a word trivial in $G$.
The cancelation norm of an element $g\in G$ is defined to be the
minimal cancelation length of a representing word.
We prove (Proposition \ref{P:cancellation})
that the cancelation norm is equal to the
conjugation invariant word norm associated with the generating set $S$.

In some cases the cancelation norm does not depend on
the representing word. In particular, the following result
is a consequence of a more general statement, see Proposition \ref{P:balanced}.

\begin{theorem}\label{T:raag_coxeter}
If\/ $G$ is either a right angled Artin group or a Coxeter group
then the cancelation norm of an element does not depend on the
representing word.
\end{theorem}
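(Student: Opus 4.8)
The plan is to deduce the statement from Proposition~\ref{P:balanced} by verifying that the standard presentations of right angled Artin groups and of Coxeter groups satisfy the hypothesis of that proposition; the real content is therefore to isolate, and then check, the \emph{local} condition on the defining relators that forces the cancelation length $|w|_{\times}$ to depend only on the element represented by $w$. It is convenient to rewrite $|w|_{\times}=|w|-m(w)$, where $m(w)$ is the maximal length of a subword (subsequence) of $w$ that is trivial in $G$. Word independence of $|w|_{\times}$ is then the same as invariance of $|w|_{\times}$ under the two elementary moves connecting any two words that represent the same element: free insertion $uv\mapsto u\,s s^{-1}\,v$ and insertion of a defining relator $uv\mapsto u\,r\,v$.

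First I would record the easy half. If $r$ is trivial in $G$, then keeping all of $r$ inside an optimal trivial subword $u'v'$ of $uv$ (with $u'\subseteq u$, $v'\subseteq v$ and $u'v'=1$) produces the trivial subword $u'rv'=u'v'=1$ of $urv$, so $m(urv)\ge m(uv)+|r|$ and hence $|urv|_{\times}\le|uv|_{\times}$; the same applies to $ss^{-1}$. Since the elementary moves are reversible, word independence follows once I prove the reverse inequality $m(urv)\le m(uv)+|r|$ for every defining relator $r$. This reverse inequality is precisely the balanced condition of Proposition~\ref{P:balanced}: it says that a longest trivial subword of $urv$ never profits from using a nontrivial proper piece of the inserted relator.

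Next I would verify this condition for the two families. For a right angled Artin group the relators are the commutators $[s,t]$ of adjacent (hence commuting) generators, and this is the transparent case: every proper subword of $[s,t]$ represents one of $1,s,t,s^{-1},t^{-1},st,\dots$, all lying in the abelian subgroup $\langle s,t\rangle$, and these commute with the letters that can sit between the matched occurrences, so a trivial subword of $urv$ using only part of $[s,t]$ can be rerouted—by commuting the stray letter past the block between $u$ and $v$—into one using either all of $[s,t]$ or none of it, with no loss of length. For a Coxeter group the relators are $s^2$ and the braid words $(st)^{m_{st}}$; the relators $s^2$ behave exactly like free reductions, the generators being involutions, and are handled as above.

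The main obstacle is the braid relator $(st)^{m}$. Here a longest trivial subword of $u\,(st)^{m}\,v$ may genuinely use a nontrivial proper subsequence $r'$ of $(st)^{m}$, and simply discarding $r'$ destroys triviality, since the remaining subword of $uv$ becomes a conjugate of $(r')^{-1}$. I would overcome this by an exchange argument internal to the finite dihedral subgroup $\langle s,t\rangle$, in which $st$ has order $m$: by analysing exactly which subwords of the relator are trivial and using the geodesic combinatorics of this dihedral group, one shows that the stray letters of $r'$ can always be absorbed into the $u$ side or the $v$ side at no net cost, which yields $m(urv)\le m(uv)+|r|$ and hence the balanced condition. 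With the condition established for both presentations, Proposition~\ref{P:balanced} gives the theorem. I expect this dihedral exchange step to be the only delicate point; the reduction to it and the right angled Artin case are essentially formal.
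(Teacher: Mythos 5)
Your reduction to the two elementary moves and the easy inequality $m(urv)\ge m(uv)+|r|$ are correct, but the proposal has a genuine gap at its core: the reverse inequality $m(urv)\le m(uv)+|r|$ for a \emph{relator} $r$ is \emph{not} the balanced condition of Proposition~\ref{P:balanced}, and it is not actually established for any of the relators. The paper's balancedness is a condition on a \emph{two-sided} relation $v=w$ with both sides nonempty: every $k$-letter deletion of $v$ must be matched by a $k$-letter deletion of $w$ representing the same element of $G$. Written as a relator $vw^{-1}=1$, this condition degenerates (one cannot delete $k\ge 1$ letters from the empty word), so $[s,t]=1$ and $(st)^{m}=1$ are not balanced relations in the paper's sense and Proposition~\ref{P:balanced} does not apply to your moves. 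The whole point of the two-sided formulation is that substituting $ts$ for $st$ (or $(ts)^n$ for $(st)^n$, or $s$ for $s^{-1}$) replaces the used letters of one side by an \emph{equal number} of letters of the other side representing the same element, so $m$ is preserved exactly and no stray letters ever need to be absorbed into $u$ or $v$; the local check is then an elementary computation in $\langle s,t\rangle$ (abelian or dihedral). Your relator-insertion inequality instead requires showing that when a maximal trivial subword $u'r'v'$ of $urv$ uses a proper nonempty piece $r'$ of $r$, the word $u'v'$ (which represents a conjugate of $(r')^{-1}$) can be trivialized by deleting at most $|r|-|r'|$ letters --- and knowing, for instance, that an arbitrary word representing a conjugate of a generator can always be trivialized by a single deletion is essentially the theorem you are trying to prove. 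As sketched, the argument is circular.

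Moreover, this difficulty is not confined to the Coxeter braid relator, which you defer entirely (``one shows that\ldots''): it is already present for $[s,t]$, so the right angled Artin case is not ``transparent.'' In the word $s^{-1}\cdot sts^{-1}t^{-1}$ the maximal trivial subword $s^{-1}\,s\,t\,t^{-1}$ uses the proper piece $s\,t\,t^{-1}$ of the relator and cannot be completed to one using all of $[s,t]$ while retaining the letter $s^{-1}$ of $u$; since the relator is inserted as a contiguous block there are no letters ``between the matched occurrences'' to commute past, and the letters of $u'$ and $v'$ that would have to absorb the discrepancy are arbitrary generators, not elements of $\langle s,t\rangle$. The free insertion $ss^{-1}$ in the Artin case needs the hard inequality too and is likewise unaddressed. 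To repair the argument you should abandon the relator formulation and follow the paper's route: check the letter-matching condition for the two-sided relations $st=ts$, $s=s^{-1}$ and $(st)^n=(ts)^n$, prove Lemma~\ref{L:balanced}, and conclude via Proposition~\ref{P:balanced}.
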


Section \ref{SS:algorithm} provides an efficient algorithm for
computing the cancelation length for nonabelian free groups.
More precisely, we prove the following result.

\begin{theorem} \label{T:algo}
Let $w \in \B F_n$ be a word of standard length $n$.
There exists an algorithm which computes the conjugation invariant word length
of $w$. Its complexity is $O(n^3)$ in time and $O(n^2)$ in memory.
\end{theorem}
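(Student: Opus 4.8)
My plan is to reduce the computation to an interval dynamic program over the given word. First, by Proposition \ref{P:cancellation} the conjugation invariant word norm coincides with the cancelation norm, and since a free group is a right-angled Artin group (the one on a graph with no edges), Theorem \ref{T:raag_coxeter} guarantees that the cancelation length is the same for every word representing a fixed element. Hence it suffices to compute the cancelation length of the given input word $w = x_1\cdots x_n$, with $x_i\in S$, and this equals $n-L$, where $L$ is the maximal length of a subsequence of $w$ that is trivial in $\B F_n$ (we may always delete everything, so $L$ is well defined and the number of deletions is minimized by maximizing the retained trivial subsequence).

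Second, I would use the standard characterization of triviality in a free group: a word $u$ equals the identity in $\B F_n$ if and only if its positions admit a non-crossing perfect matching in which matched positions carry mutually inverse letters. The forward direction follows by induction, reading off the arc $(p,p+1)$ from each cancellation $x_p x_{p+1}$ with $x_{p+1}=x_p^{-1}$ performed during free reduction; since the two letters are adjacent, the new arc crosses nothing. The reverse direction follows by taking an innermost arc (whose endpoints are adjacent in the subsequence, there being no matched positions between them), cancelling it, and inducting. Consequently a subsequence of $w$ is trivial precisely when its chosen positions admit such a matching, so $L$ equals twice the maximal size of a non-crossing partial matching of the positions of $w$ into inverse pairs.

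Third, I would compute $L$ by an interval recursion. For $1\le i\le j\le n$ let $M[i,j]$ be the maximal length of a trivial subsequence of $x_i\cdots x_j$, with $M[i,j]=0$ when $i>j$. Either the leftmost letter $x_i$ is deleted, or it is matched to some later position $k$ with $x_k=x_i^{-1}$; because the matching is non-crossing, the positions strictly between $i$ and $k$ and those after $k$ are matched independently. This yields
\[
M[i,j] = \max\Bigl( M[i+1,j],\ \max_{\substack{i<k\le j\\ x_k = x_i^{-1}}} \bigl(2 + M[i+1,k-1] + M[k+1,j]\bigr)\Bigr),
\]
and the answer is the cancelation length $n - M[1,n]$. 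Filling the table in order of increasing window length $j-i$ uses $O(n^2)$ entries, each computed by a scan over at most $n$ values of $k$, giving $O(n^3)$ time and $O(n^2)$ memory, as claimed.

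The routine parts are the table recursion and the complexity count. The step needing genuine care is the combinatorial characterization of triviality together with its compatibility with the interval decomposition: one must verify that whenever the leftmost retained letter $x_i$ is paired with $x_k$, the non-crossing condition forces every other arc to lie wholly inside $(i,k)$ or wholly to its right, which is exactly the decoupling that legitimizes the recurrence. I expect checking this decoupling, rather than the arithmetic of the bound, to be the crux of the argument.
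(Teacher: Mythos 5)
Your proof is correct and follows essentially the same route as the paper: the paper's Lemma \ref{L:xw} encodes exactly your recurrence (phrased as the minimal number of deletions rather than the maximal retained trivial subsequence), and the algorithm is the same interval dynamic program over connected subwords with the same $O(n^3)$ time and $O(n^2)$ memory bounds. Your non-crossing matching characterization of triviality is simply a more explicit justification of the decoupling step that the paper's proof of Lemma \ref{L:xw} asserts directly in the language of minimal trivializing sequences.
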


A simple software for computing the biinvariant word norm on
the free group on two generators can be downloaded from the website of MM, see \cite{Mar}.

\subsection*{Quasiisometric embeddings}
One way of studying the geometry of a metric space $X$ is
to construct quasiisometric embeddings of understood
metric spaces into $X$. In Section \ref{SS:Zn}, we prove
that the free abelian group $\B Z^n$ with its standard
word metric can be quasiisometrically embedded into
a group $G$ equipped with the biinvariant word metric
provided $G$ admits at least $n$ linearly independent
homogeneous quasimorphisms.

We then proceed to embedding of trees.
We prove that there exists an isometric embedding
of a locally compact  tree in the biinvariant Cayley graph of a nonabelian
free group. We first construct an isometric embedding of the
one skeleton of the infinite unit cube
$$
\square^{\infty}:= \bigcup [0,1]^n
$$
equipped with the $\ell^1$-metric (Theorem \ref{T:cube}).
It is an easy observation that any locally compact tree
with edges of unit lengths admits an isometric embedding
into such a cube.

\begin{theorem}\label{T:tree}
Let $T$ be a locally compact tree with edges if unit lengths.
There is an isometric embedding $\OP{T}\to \B F_2$ into the Cayley graph of the
free group on two generators with the biinvariant word metric associated with
the standard generators.
\end{theorem}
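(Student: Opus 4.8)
The plan is to reduce Theorem~\ref{T:tree} to Theorem~\ref{T:cube}, the existence of an isometric embedding of the one-skeleton of the infinite cube $\square^{\infty}$ with the $\ell^1$-metric into the biinvariant Cayley graph of $\B F_2$. As the excerpt already remarks, the essential combinatorial content is that any locally compact tree with unit-length edges embeds isometrically into the one-skeleton of $\square^{\infty}$; composing this embedding with the one from Theorem~\ref{T:cube} yields the desired isometric embedding $T\to\B F_2$. So the real work is to construct the tree-into-cube embedding and verify that it is isometric, and then to check that isometries compose to give an isometry.

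First I would set up the cube embedding of the tree. Fix a base vertex $v_0\in T$ (the tree is connected, so distances are well defined and integer-valued on vertices). Since $T$ is locally compact with unit edges, each vertex has finite degree, so the edge set $E(T)$ is countable; I would fix a bijective labelling $E(T)\leftrightarrow\{1,2,3,\dots\}$, assigning to each edge its own coordinate direction in $\square^{\infty}$. To a vertex $u\in T$, I associate the $0/1$-vector whose $i$-th coordinate is $1$ exactly when the $i$-th edge lies on the unique geodesic from $v_0$ to $u$. This is a finite support vector, hence a vertex of $\square^{\infty}$, and the assignment is clearly injective because distinct vertices have distinct geodesics from $v_0$. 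I would extend this vertex map to edges of $T$ in the obvious way: an edge of $T$ maps to the unit segment in $\square^{\infty}$ joining the images of its endpoints, which differ in exactly one coordinate (the coordinate labelling that very edge), so it genuinely lands on an edge of the one-skeleton.

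The key step is verifying the isometry. For two vertices $u,w\in T$, the geodesic from $u$ to $w$ passes through their meeting point (median) with the base geodesics, and the symmetric difference of the two edge-sets $\OP{geo}(v_0,u)$ and $\OP{geo}(v_0,w)$ is exactly the set of edges on the geodesic from $u$ to $w$. Hence the $\ell^1$-distance between their images, which counts coordinates in which the two $0/1$-vectors differ, equals the number of edges on the $T$-geodesic from $u$ to $w$, i.e.\ $d_T(u,w)$. This uses the tree property crucially: in a tree, geodesics are unique and the edge symmetric difference of two root-geodesics is precisely the geodesic between their endpoints. This manipulation of geodesics and symmetric differences is the heart of the argument, and I expect it to be the main point requiring care, especially in checking that the embedding is isometric on all of $T$ (not just on vertices) and that no accidental coincidences or shortcuts arise in $\square^{\infty}$.

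Finally I would assemble the pieces. The map from $T$ into $\square^{\infty}$ just constructed is an isometric embedding of metric spaces, and Theorem~\ref{T:cube} supplies an isometric embedding $\square^{\infty}\to\B F_2$ into the biinvariant Cayley graph; since a composition of isometric embeddings is again an isometric embedding, the composite $T\to\B F_2$ is isometric, which is exactly the statement of Theorem~\ref{T:tree}. The only mild subtlety beyond the symmetric-difference computation is bookkeeping: ensuring the edge-labelling is consistent so that the image of $T$ lands in the one-skeleton of $\square^{\infty}$ rather than in its interior, but this follows automatically from the fact that adjacent vertices of $T$ differ in a single coordinate.
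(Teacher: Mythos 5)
Your proposal is correct and follows essentially the same route as the paper: reduce to Theorem~\ref{T:cube} and compose with an isometric embedding of $T$ into $\square^{\infty}$. The paper builds that embedding by inductively mapping stars of vertices into stars of cube vertices, whereas you give the equivalent explicit map sending a vertex to the indicator vector of the edges on its geodesic from a fixed root and verify isometry via the symmetric-difference identity; this is the same embedding, just described (and justified) more explicitly.
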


\subsection*{The geometry of cyclic subgroups}
Let us recall that
a function $q\colon G \to \B R$
is called a {\em quasimorphism}
if there exists a real number $A\geq 0$ such that
$$
|q(gh) - q(g) - q(h)|\leq A
$$
for all $g,h\in G$. A quasimorphism $q$ is called {\em homogeneous}
if in addition
$$
q(g^n)=nq(g)
$$
for all $n\in \B Z$.
The vector space of homogeneous quasimorphisms on $G$
is denoted by $Q(G)$.
It is straighforward to prove that a quasimorphism
$q\colon G\to \B R$ defined on a normally finitely generated group
is Lipschitz with respect to the
biinvariant word metric on $G$ and the standard metric
on the reals~\cite[Lemma 3.6]{MR2819193}.
For more details about quasimorphisms and their
connections to different branches of mathematics see \cite{Calegari}.

The geometry of a cyclic subgroup $\langle g\rangle\subset G$ is
described by the growth rate of the function
$n\mapsto \|g^n\|$. A priori this function can be anything
from bounded to linear. If it is linear then the cyclic
subgroup is called {\em undistorted} and {\em distorted} otherwise.
It is an easy observation that if $\psi\colon G\to \B R$
is a homogeneous quasimorphism and $\psi(g)\neq 0$ then $g$
is undistorted. One of the main observations
of this paper is that for many classes of
groups of geometric origin
a cyclic subgroup is either bounded or detected
by a homogeneous quasimorphism.


\begin{definition}\label{D:bq}
A normally finitely generated group $G$ satisfies the
{\bf bq-dichotomy} if every cyclic subgroup of $G$ is
either {\bf b}ounded (with respect to the biinvariant word metric)
or detected by a homogeneous {\bf q}uasimorphism.
\end{definition}

\begin{remark}
One can consider
a weaker version of the above dichotomy when a cyclic
subgroup is either bounded or undistorted. Since undistortedness
is proved usually with the use of quasimorphism most of the
proofs yield the stronger statement. There is one exception
in this paper, Theorem \ref{T:coxeter}, where we prove
the weaker dichotomy for Coxeter groups
and the stronger under an additional
assumption. This is because we don't know how to extend
quasimorphisms from a parabolic subgroup of a Coxeter group.
More precisely, the following problem seems to be open:

{\em
Let $g\in W_T$, where $W_T$ is a standard parabolic subgroup
of a Coxeter group~$W$.
Does $\OP{scl}_{W_T}(g)>0$ imply  $\OP{scl}_{W}(g)>0$?
}

Here, $\OP{scl}_G$ denotes the stable commutator
length in $G$ (see Calegari's book \cite{Calegari} for details.)
\end{remark}

The only example known to the authors of a group which does not satisfy
bq-dichotomy is provided by Muranov in \cite{MR2718128}.  He constructs a group
$G$ with unbounded (but distorted) elements not detectable by a homogeneous quasimorphism.
His group $G$ is finitely generated but not finitely presented.  We know no
finitely presented example.  Also, we know no example of
an undistorted subgroup not detected by a homogeneous quasimorphism.

\begin{remark}
Observe that if $G$ satisfies the bq-dichotomy then
if $\OP{scl}_G(g)=0$ then the cyclic subgroup $\langle g\rangle$
is bounded, due to a theorem of Bavard \cite{Ba}.
\end{remark}

It is interesting to understand to what extent the bq-dichotomy
is true. To sum up let us make a list of groups that satisfy the bq-dichotomy:

\begin{itemize}
\item
Coxeter groups with even exponents - Theorem \ref{T:coxeter},
\item
finite index subgroups of mapping class groups of
closed oriented surfaces (possibly with punctures) - Theorem~\ref{T:mcg},
\item
Artin braid groups (both pure and full) on a finite number of strings - Theorem~\ref{T:braid},
\item
spherical braid groups (both pure and full) on a finite number of strings -
Theorem~\ref{T:sphere-braid},
\item
finitely generated nilpotent groups - Theorem \ref{T:nilpotent}. We actually prove
that the commutator subgroup $[G,G]$ is bounded in $G$,
\item
finitely generated solvable groups whose commutator subgroups
are finitely generated and nilpotent, e.g. lattices in simply connected
solvable Lie groups - Theorem \ref{T:solvable},
\item
$\OP{SL}(n,\B Z)$ - for $n=2$ it is proved by
Polterovich and Rudnick \cite{PR}; for $n>2$
the groups are bounded,
\item
lattices in certain Chevalley groups \cite{MR2819193} (the
groups are boun\-ded in this case),
\item
hyperbolic groups - due to  Calegari and Fujiwara
(Theorem 3.56 in \cite{Calegari}). They prove there that
if $g$ is a nontorsion element such that no positive power
of $g$ is conjugate to its inverse then it is detected by
a homogeneous quasimorphisms. On the other hand it follows
from Lemma \ref{L:bounded} that if a positive power of $g$ is
conjugate to its inverse then $g$ generates a bounded
cyclic subgroup.),
\item
right angled Artin groups - Theorem \ref{T:raag},
\item Baumslag-Solitar groups and fundamental groups of some
graph of groups - Theorem \ref{T:graph-of-gps}.
\end{itemize}

\subsection{Bounded elements}
\label{SS:bounded-elements}
Let $[x,y] = xyx^{-1}y^{-1}$ and ${}^t\!x = txt^{-1}$. In many cases we prove that an element $g\in G$ generates
a bounded cyclic subgroup by making the
observation that the element $[x,t]$ in the group
$$
\Gamma:= \left \langle x,t \,|\,\left [x,{}^t\!x\right ]=1\right \rangle
$$
generates a bounded subgroup of $\Gamma$. Then we construct nontrivial
homomorphism
$\Psi\colon\Gamma\to G$
such that
$\Psi[x,t]=g$. The examples include Baumslag-Solitar groups,
nonabelian braid groups $B_n$,
$\OP{SL}(2,\B Z[1/2])$, and HNN extensions of abelian groups, e.g.
$\OP{Sol}(3,\B Z)$, Hei\-senberg groups and lamplighter groups
(see Section \ref{SS:bounded}).

\subsection{Elements detected by a quasimorphism}
\label{SS:el-det-by-a-qm}

In some cases it is easy to provide examples of elements detected by a
nontrivial homogeneous quasimorphism, for example any nontrivial element in a
free group has this property.  Generalizing this observation yields the
following result (Section \ref{SS:quasi-res-real}).
\begin{theorem}\label{T:raag_cox}
Let $G$ be one of the following groups:
\begin{enumerate}
 \item a right angled Artin group,
 \item the commutator subgroup in a right angled Coxeter group,
 \item a pure braid group.
\end{enumerate}
Then for every nontrivial element
$g\in G$ there exists a homogeneous quasimorphism $\psi$ such that
$\psi(g)\neq 0$. In particular, every nontrivial cyclic subgroup in $G$
is biinvariantly undistorted.
\end{theorem}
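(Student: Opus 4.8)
The plan is to reduce every case to the detection of nontrivial elements in free groups, which the excerpt takes as known, and to propagate detection along homomorphisms. Two elementary principles will be used throughout: (i) if $\phi\colon G\to H$ is a homomorphism and $\psi$ is a homogeneous quasimorphism on $H$, then $\psi\circ\phi$ is a homogeneous quasimorphism on $G$ and $(\psi\circ\phi)(g)=\psi(\phi(g))$; and (ii) homogeneous quasimorphisms are conjugation invariant, so if $\phi$ is a retraction onto a subgroup $H$ and $g=hg'h^{-1}$ with $g'\in H$, then $\psi(\phi(g))=\psi(g')$ irrespective of $h$. These let me pull detection back through retractions and projections and reduce to irreducible pieces, where the actual quasimorphisms will be produced either by the abelianization (for elements outside $[G,G]$) or, via a Bestvina--Fujiwara construction, from an action on a hyperbolic space (detecting $\OP{scl}>0$ in the sense of Bavard).

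For a right angled Artin group $A_\Gamma$ I would induct on the number of vertices of $\Gamma$. If $g$ has nonzero image under the abelianization $A_\Gamma\to\B Z^{V(\Gamma)}$, a coordinate homomorphism detects it, so I may assume $g\in[A_\Gamma,A_\Gamma]$; in particular $\Gamma$ is not complete. If $\Gamma$ is a join, then $A_\Gamma$ is a direct product, $g$ projects nontrivially to a factor, which is a smaller right angled Artin group, and induction with principle (i) concludes. Otherwise $A_\Gamma$ acts acylindrically on a hyperbolic space: the extension graph when $\Gamma$ is connected, or the Bass--Serre tree of the corresponding free splitting when $\Gamma$ is disconnected. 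An element acting elliptically is conjugate into a proper standard subgroup (a star subgroup $\langle w\rangle\times A_{\OP{lk}(w)}$, or a free factor), which is a canonical retract and a smaller right angled Artin group, so principles (i) and (ii) together with induction dispose of it. An element acting loxodromically is a WPD element, hence is detected by a Bestvina--Fujiwara quasimorphism unless it is conjugate to its inverse; this last possibility is excluded because right angled Artin groups are biorderable, and in a biorderable group no nontrivial element is conjugate to its inverse.

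Case (2) I would deduce from case (1). The commutator subgroup $K$ of a right angled Coxeter group $W_\Gamma$ has finite index, since the abelianization of $W_\Gamma$ is $(\B Z/2)^{V(\Gamma)}$, and it is torsion free, because the torsion of $W_\Gamma$ is conjugate into the finite clique subgroups, which meet $K$ trivially. Hence $K$ acts freely and cocompactly on the Davis complex and is a compact special group in the sense of Haglund and Wise, so $K$ embeds in a right angled Artin group $A$. Given $1\neq g\in K$, the first case supplies a homogeneous quasimorphism on $A$ not vanishing on $g$, and its restriction to $K$ is again a homogeneous quasimorphism detecting $g$.

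For the pure braid group I would use the split extension $P_n=\B F_{n-1}\rtimes P_{n-1}$ coming from forgetting the last strand, inducting on $n$: if the image of $g$ in $P_{n-1}$ is nontrivial, principle (i) and induction finish. The difficulty is exactly the elements lying in the free kernel $\B F_{n-1}$, which is normal but not a retract, so a quasimorphism detecting $g$ inside the free group need not extend to $P_n$; this is the same extension problem the authors flag for parabolic subgroups of Coxeter groups, and I expect it to be the main obstacle. The way I would attempt to bypass it mirrors the Artin case: $P_n$ is biorderable, so no nontrivial element is conjugate to its inverse, and one should exhibit an action of $P_n$ on a hyperbolic space, arising from its poly-free graph-of-groups structure, on which the kernel elements act loxodromically, so that Bestvina--Fujiwara quasimorphisms detect them. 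The delicate part is verifying the requisite properness (acylindricity or WPD) for every nontrivial element rather than for a generic one; the biorderability shared by all three families is what makes the ``not conjugate to its inverse'' hypothesis automatic and is the common thread tying the cases together.
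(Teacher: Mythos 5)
Your cases (1) and (2) run broadly parallel to the paper's argument for right angled Artin groups (the paper uses the Behrstock--Charney rank-one criterion on the universal cover of the presentation complex rather than the extension graph, handles join subgroups by the same projection-and-extension trick, and rules out ``$g$ conjugate to $g^{-1}$'' by a CAT(0) axis argument, Lemma \ref{L:not-tilda}, where you invoke biorderability --- both work). For case (2) the paper does \emph{not} pass through an embedding into a right angled Artin group: it proves directly that $W_\Delta'$ is torsion free and reruns the rank-one argument on the Davis complex, using Caprace--Fujiwara. Your route via Haglund--Wise specialness is legitimate \emph{provided} you justify that the commutator subgroup itself (not merely some finite-index subgroup, as ``virtually special'' would give) embeds into a right angled Artin group; if you only get a finite-index subgroup $K\le W_\Delta'$, detecting $g$ from a quasimorphism on $K$ is not automatic, since inducing/averaging a homogeneous quasimorphism over cosets can kill its value on $g^N$. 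This needs an explicit citation or argument.

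The genuine gap is case (3), and you have correctly identified where it is but the repair you sketch will not work. The kernel of the forgetful map $P_n\to P_{n-1}$ contains the generators $A_{in}$, which are Dehn twists about curves enclosing two punctures; more generally $P_n$ is far from having a single acylindrical or WPD action on a hyperbolic space in which \emph{every} nontrivial kernel element is loxodromic --- reducible elements such as these twists (and, at the top level, the central full twist) are elliptic in any curve-complex-like action, so no single Bestvina--Fujiwara quasimorphism construction of the kind you describe can detect them. This is exactly why the paper takes a different route: it embeds $B_n$ into the mapping class group $\mathcal{MCG}_g^n$ of a closed hyperbolic surface with $n$ punctures (Birman), passes to a power of $\gamma$ admitting a Nielsen--Thurston decomposition into commuting pseudo-Anosov pieces and Dehn twists supported on disjoint subsurfaces, uses biorderability of $P_n$ (Lemma \ref{L:biorder}) to show each piece is chiral, uses that pure mapping classes fix punctures to show distinct pieces have non-conjugate powers, and then applies Bestvina--Bromberg--Fujiwara (Lemma \ref{L:fuji-mcg}), whose underlying construction uses a whole family of quasi-trees of subsurface projections rather than one hyperbolic space. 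Without something equivalent to that input, your proof of case (3) is incomplete.
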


We say that a group is {\em quasi-residually real} if
it satisfies the property from the statement of the above theorem.
Of course, a quasi-residually real group satisfies
the bq-dichotomy.

\section{The cancelation norm}
Let $G=\langle S\,|\,R\rangle$ be
a presentation of $G$, where $S$ is a finite symmetric
set of generators.
Let $w=s_1\dots s_n$ be a word in the alphabet $S$.
The number
$$
|w|_{\times}:=
\min\{k\in \B N\,|\, s_1\dots \widehat{s_{i_1}}\dots
\widehat{s_{i_k}}\dots s_n=1\text{ in }G\}
$$
is called the {\em cancelation length} of the word  $w$.
In other words, the cancelation length is the smallest number
of letters we need to cross out from $w$ in order to
obtain a word representing the neutral element.
The number
$$
|g|_{\times}:=
\min\{|w|_{\times }\in \B N\,|\, w \text{ represents $g$ in }G\}
$$
is called the {\em cancelation norm} of $g\in G$.

The sequence of indices $i_1,\ldots,i_k$ so that
deleting the letters $s_{i_1},\ldots,s_{i_k}$ makes
the word $w=s_1\dots s_n$ trivial is called the
trivializing sequence of $w$.
We will sometimes abuse the terminology and we
will call the sequence of letters $s_{i_1},\ldots,s_{i_n}$
trivializing. In this terminology the
cancelation length is the minimal length of
a trivializing sequence.

\begin{proposition}\label{P:cancellation}
Let\/ $G$ be finitely normally generated by a symmetric set $S\subset G$.
The cancelation norm is equal to the biinvariant word norm
associated with $S$.
\end{proposition}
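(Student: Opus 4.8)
The plan is to prove the two norms are equal by showing each is bounded above by the other. Write $\|g\|$ for the biinvariant word norm associated with $\overline{S}$ and $|g|_{\times}$ for the cancelation norm. I would establish $\|g\|\leq |g|_{\times}$ and $|g|_{\times}\leq\|g\|$ separately, the first being the direction that requires the key combinatorial idea.

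First I would prove $\|g\|\leq|g|_{\times}$. Suppose $w=s_1\cdots s_n$ represents $g$ and that deleting the $k$ letters $s_{i_1},\ldots,s_{i_k}$ yields a word trivial in $G$. The idea is that each deleted letter can be ``peeled off'' into a conjugate, so that $g$ becomes a product of $k$ conjugates of elements of $S$. Concretely, write the trivial word as $v=s_1\cdots\widehat{s_{i_1}}\cdots\widehat{s_{i_k}}\cdots s_n=1_G$, and observe that $g=wv^{-1}=w$ since $v=1$; more usefully, insert the deleted letters back one at a time. If $u$ is the product of a prefix of the kept letters, then reinserting a deleted letter $s_{i_j}$ at its original position contributes a factor $u\,s_{i_j}\,u^{-1}$, which lies in $\overline{S}$. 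Carrying this out for all $k$ deleted letters expresses $g$ as a product of exactly $k$ elements of $\overline{S}$, whence $\|g\|\leq k$. Minimizing over trivializing sequences and over words representing $g$ gives $\|g\|\leq|g|_{\times}$.

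For the reverse inequality $|g|_{\times}\leq\|g\|$, suppose $g=t_1\cdots t_k$ with each $t_j\in\overline{S}$, so $t_j=u_j s_j u_j^{-1}$ for some $s_j\in S$ and $u_j\in G$. Choosing words in the alphabet $S$ for each $u_j$, I can spell out each conjugate $u_j s_j u_j^{-1}$ as a word in $S$, and concatenate to get a word $w$ representing $g$. Now delete from $w$ exactly the $k$ letters corresponding to the ``central'' generators $s_1,\ldots,s_k$. After these deletions, each block $u_j s_j u_j^{-1}$ collapses to $u_j u_j^{-1}$, which is trivial in $G$, so the whole word becomes trivial. Hence $w$ admits a trivializing sequence of length $k$, giving $|g|_{\times}\leq|w|_{\times}\leq k$; minimizing over representations of $g$ as products in $\overline{S}$ yields $|g|_{\times}\leq\|g\|$.

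The main obstacle is the bookkeeping in the first direction: one must verify carefully that reinserting the deleted letters, in the correct order and with the correct prefix conjugators, really produces a product of exactly $k$ elements of $\overline{S}$ and that the conjugators land in $G$ (not merely in the free monoid on $S$). The cleanest way to manage this is induction on $k$: peel off a single deleted letter to reduce to a word with one fewer deletion, tracking how the conjugating prefix is updated. I expect the algebra here to be routine once the inductive framing is fixed, and the second inequality to be essentially immediate from the definitions.
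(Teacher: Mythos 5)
Your proposal is correct and follows essentially the same route as the paper: one direction by spelling out a product of $k$ conjugates $u_js_ju_j^{-1}$ and deleting the central letters, the other by converting a trivializing sequence into a product of $k$ conjugates of generators. The only cosmetic difference is that you conjugate each reinserted letter by the \emph{prefix} of kept letters, whereas the paper uses the suffix products $w_i=\prod_{j=i}^k u_j$; both telescope to $g$ in the same way.
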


\begin{proof}
Let $g=\prod_{i=1}^k w_i^{-1}s_iw_i$ then $(s_1,\dots,s_k)$ is a trivializing sequence for $g$,
and hence $|g|_\times \leq \|g\|$.

Let $g=u_0s_1u_1\cdots s_ku_k$ with $(s_1,\dots,s_k)$ being a trivializing sequence.
Then $g=\prod_{i=1}^k w_i^{-1}s_iw_i$ with $w_i=\prod_{j=i}^k u_j$.  Thus $\|g\| \leq |g|_\times$.
\end{proof}

Let $G=\langle S\,|\, R\rangle $. A relation $v=w$ in $R$ is called {\em balanced}
if it has the following property:
if $\bar v$ is the word obtained from $v$ by deleting $k$ letters
then there exist $k$ letters
in $w$ such that deleting them produces a word $\bar w$ such that
$\bar v=_G \bar w$ in $G$. The following lemma
is straightforward to prove and is left to the reader.

\begin{lemma}\label{L:balanced}
If\/ $G=\langle S\,|\, R\rangle$ and $v=w$
is a balanced relation in $R$ then
$$|xvy|_{\times}=|xwy|_{\times}$$
for any words $x,y$.
\end{lemma}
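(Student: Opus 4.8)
The plan is to establish the two inequalities $|xwy|_{\times}\le|xvy|_{\times}$ and $|xvy|_{\times}\le|xwy|_{\times}$ separately. Since a relation $v=w$ is the same as its reverse $w=v$, I read the balanced property symmetrically in $v$ and $w$, so once one inequality is proved the other follows verbatim with the roles of $v$ and $w$ interchanged. Thus it suffices to prove $|xwy|_{\times}\le|xvy|_{\times}$.

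First I would fix an optimal trivializing sequence for $xvy$, realizing $|xvy|_{\times}=k$: deleting these $k$ letters yields a word trivial in $G$. Because $xvy$ is a concatenation of three blocks, every deleted position lies in exactly one of $x$, $v$, $y$, and deletion acts blockwise on the concatenation. I would record the number of deletions falling in each block as $a,b,c$ with $a+b+c=k$, and write $\bar x,\bar v,\bar y$ for the blocks after deletion, so that $\bar x\,\bar v\,\bar y=_G 1$. The point to extract here is that $\bar v$ is exactly the word obtained from $v$ by deleting $b$ letters.

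Next I would apply the balanced hypothesis to this $\bar v$: since $\bar v$ arises from $v$ by deleting $b$ letters, there exist $b$ letters of $w$ whose deletion produces a word $\bar w$ with $\bar v=_G\bar w$. I then delete from $xwy$ the same $a$ letters of $x$, these $b$ letters of $w$, and the same $c$ letters of $y$, a total of $a+b+c=k$ deletions, producing $\bar x\,\bar w\,\bar y$. Substituting $\bar w=_G\bar v$ into $\bar x\,\bar v\,\bar y=_G 1$ gives $\bar x\,\bar w\,\bar y=_G 1$, so this is a trivializing sequence for $xwy$ of length $k$. Hence $|xwy|_{\times}\le k=|xvy|_{\times}$, and the symmetric argument yields the reverse inequality.

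The argument is genuinely routine, as the author indicates; the only point that warrants a sentence of care is the blockwise bookkeeping, namely that an optimal deletion for $xvy$ restricts on the $v$-block to an honest deletion of $b$ letters from $v$, which is immediate because deletion of positions does not interact across block boundaries in a concatenation. I would also remark that the balanced hypothesis forces $|v|=|w|$ as words (applying it in both directions shows $|v|\le|w|$ and $|w|\le|v|$), which guarantees that $w$ always contains enough letters to match any number $b\le|v|$ of deletions performed in $v$.
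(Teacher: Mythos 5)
Your proof is correct and is precisely the argument the paper intends (the paper explicitly leaves this lemma to the reader): split an optimal trivializing sequence for $xvy$ blockwise, transport the $b$ deletions in the $v$-block to $w$ via the balanced hypothesis, and conclude from the fact that $=_G$ is a congruence that $\bar x\,\bar w\,\bar y=_G\bar x\,\bar v\,\bar y=_G 1$. Your explicit remark that the balanced condition must be read symmetrically in $v$ and $w$ --- the paper's definition is phrased in one direction only, but a relation and its reverse are the same relation, and both inequalities are needed for the stated equality --- is a worthwhile clarification rather than a gap.
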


\begin{example}\label{E:cox-raag}
Coxeter groups and right angled Artin groups admit presentations whose all relations are balanced.
Indeed, observe that there exists a
presentation of a Coxeter group with
relations of the form $s=s^{-1}$ and $st\dots s=ts\dots t$
or $(st)^n=(ts)^n$. The presentation with balanced relations of
a right angled Artin group has relations of the form $st=ts$.
\end{example}

The proof of the following observation is straightforward and is left to the reader.
\begin{proposition}
\hfill
\begin{enumerate}
\item
Let\/ $G_i=\langle S_i|R_i\rangle$, for  $i\in\{1,2\}$, be two presentations whose all relations are balanced and
with disjoint $S_1$ and $S_2$.  Let\/ $R_0=\{s_1s_2=s_2s_1|s_i\in S_i\}$.
Then $\langle S_1\cup S_2|R_0\cup R_1\cup R_2\rangle$ is a presentation
of\/ $G_1\times G_2$ with all relations balanced.
\item
Let\/ $G_i=\langle S_i|R_i\rangle$, for  $i\in \{1,2\}$, be two presentations whose all relations are balanced.
Assume that the subgroups of\/ $G_1$ and\/ $G_2$ generated by
$T=S_1\cap S_2$ are isomorphic (by the isomorphism which is the identity on $T$).
Then $G_1*_{\langle T\rangle}G_2=\langle S_1\cup S_2|R_1\cup R_2\rangle$
has all relations balanced.
\end{enumerate}
\end{proposition}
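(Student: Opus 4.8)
The plan is to treat both parts by the same two-step scheme: first recognize that the displayed presentation presents the asserted group, and then verify that every relation in the combined relator set is balanced. The relations inherited from $R_1$ and $R_2$ will be handled uniformly in both parts, and in part (1) the new commutator relations will be checked by hand.

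For the first step I would invoke the standard presentations of a direct product and of an amalgamated free product. In part (1) the group $\langle S_1\cup S_2|R_0\cup R_1\cup R_2\rangle$ is the usual presentation of $G_1\times G_2$, the set $R_0$ being exactly the family of commutator relations $[s_1,s_2]=1$ that record that the two factors commute. In part (2) the key point is that the generators in $T=S_1\cap S_2$ are literally shared between the two presentations, so taking no relations beyond $R_1\cup R_2$ has the effect of amalgamating the copy of $\langle T\rangle$ sitting inside $G_1$ with the copy sitting inside $G_2$. Here the hypothesis that these two subgroups are isomorphic by the identity on $T$ is precisely what is required: it guarantees that identifying the shared letters imposes no unintended relation (no word in $T$ dies in one factor while surviving in the other), so that the resulting group really is $G_1*_{\langle T\rangle}G_2$ and each factor embeds.

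The second step is where balancedness enters, and the crux is that it transfers forward along the canonical homomorphism $\iota_i\colon G_i\to G$ induced by the inclusion $S_i\subset S_1\cup S_2$. This homomorphism exists because every relation of $R_i$ is already among the relations of $G$. Let $v=w$ be a relation of $R_i$, balanced in $G_i$. Any deletion of $k$ letters from $v$ yields a word $\bar v$ still spelled in the alphabet $S_i$, and balancedness in $G_i$ supplies a deletion of $k$ letters from $w$ producing $\bar w$ with $\bar v=_{G_i}\bar w$. Applying $\iota_i$ gives $\bar v=_G\bar w$, so the very same pair of deletions witnesses balancedness of $v=w$ in $G$. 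Note that only the forward implication $=_{G_i}\Rightarrow=_G$ is used, so injectivity of $\iota_i$ plays no role in this step; all that is invoked is that the defining relations of $G_i$ continue to hold in $G$.

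It then remains, in part (1), to check that each relation $s_1s_2=s_2s_1$ of $R_0$ is balanced, which I would do by inspecting the number $k\in\{0,1,2\}$ of deleted letters: for $k=0$ the relation itself gives $s_1s_2=_Gs_2s_1$; for $k=1$ one deletes the complementary letter on the opposite side, matching $s_1$ with $s_1$ or $s_2$ with $s_2$; and for $k=2$ both sides collapse to the empty word. I expect no genuine obstacle in any of this, since the argument is essentially bookkeeping. The only place that actually consumes a hypothesis is the presentation identification in part (2), where the identity-on-$T$ isomorphism assumption must be used to rule out a collapse of the shared subgroup; that is the single step I would write out with care.
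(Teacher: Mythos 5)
Your proof is correct. The paper states this proposition without proof (``straightforward and left to the reader''), and your argument --- first identifying $\langle S_1\cup S_2\mid R_0\cup R_1\cup R_2\rangle$ and $\langle S_1\cup S_2\mid R_1\cup R_2\rangle$ as the standard presentations of the direct product and the amalgamated product (the latter using the identity-on-$T$ hypothesis), then transporting balancedness of each relation of $R_i$ forward along the induced homomorphism $G_i\to G$, observing that only the implication $\bar v=_{G_i}\bar w\Rightarrow\bar v=_G\bar w$ is needed, and finally checking the commutator relations of $R_0$ by the $k\in\{0,1,2\}$ case analysis --- is exactly the intended straightforward argument.
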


\begin{proposition}\label{P:balanced}
Let $G=\langle S|R\rangle$ be a presentation whose all relations are balanced.  Let $u$ and $v$
be two words in alphabet $S$ representing the same element
$g\in G$.  Then $|v|_\times=|w|_\times$.
In particular, the cancelation norm of $g$ is equal to the
cancelation length of any word representing $g$.
\end{proposition}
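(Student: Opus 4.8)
The plan is to deduce the statement from \emph{invariance of the cancelation length under elementary moves}. (Note that the asserted equality is between the two representing words, i.e.\ $|u|_\times=|v|_\times$; once this is known for every pair of words representing $g$, the common value is forced to equal the cancelation norm $|g|_\times$, since the minimum in the definition of $|g|_\times$ is then attained by every representative.)

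First I would invoke the standard description of the word problem for $G=\langle S\mid R\rangle$: since $u$ and $v$ represent the same element, they are joined by a finite chain $u=w_0,w_1,\dots,w_m=v$ in which each $w_{i+1}$ is obtained from $w_i$ by a single elementary move of one of two types: (a) a \emph{free} insertion or deletion of a cancelling pair, $x\,y\leftrightarrow x\,ss^{-1}\,y$ with $s\in S$; or (b) the substitution of one side of a defining relation for the other, $x\,p\,y\leftrightarrow x\,q\,y$ with $(p=q)\in R$. It then suffices to show that each elementary move preserves $|\cdot|_\times$, for then $|u|_\times=|w_0|_\times=\dots=|w_m|_\times=|v|_\times$. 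Moves of type (b) are immediate from Lemma~\ref{L:balanced}: because every relation in $R$ is balanced, $|xpy|_\times=|xqy|_\times$.

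The real content is therefore the invariance under a free move (a), which is \emph{not} covered by Lemma~\ref{L:balanced}, since $ss^{-1}$ and the empty word have different lengths. Here the inequality $|x ss^{-1} y|_\times\le|xy|_\times$ is immediate: any trivializing sequence for $xy$ also trivializes $x ss^{-1}y$ once we decline to delete the inserted pair. To organize the reverse inequality I would pass to the complementary quantity $\tau(w):=|w|-|w|_\times$, the maximal length of a subword (subsequence) of $w$ representing $\OP{1}_G$; the claim becomes
\[
\tau(x ss^{-1} y)=\tau(xy)+2 .
\]
The bound $\ge$ follows by inserting the pair $s,s^{-1}$ into a maximal trivial subword of $xy$. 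For $\le$, take a maximal trivial subword $\sigma$ of $x ss^{-1}y$: if $\sigma$ contains both inserted letters, delete them to obtain a trivial subword of $xy$ of length $|\sigma|-2$; if it contains neither, $\sigma$ is already a subword of $xy$.

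The main obstacle is the remaining case, in which $\sigma$ uses \emph{exactly one} inserted letter, say the $s$. Dropping this $s$ leaves a subword of $xy$ representing a conjugate of $s^{-1}$ rather than the identity, so the naive projection loses control; this is precisely where the failure of the free move to be balanced is felt. I would resolve it by appealing to the cancellation structure of a word representing the identity: in $\sigma$ the chosen $s$ must cancel against some $s^{-1}$ occurring elsewhere in $\sigma$, and excising this \emph{matched pair} leaves a word that still represents $\OP{1}_G$ and no longer meets the inserted letters, hence a trivial subword of $xy$ of length $|\sigma|-2$. For the free group this is the non-crossing matching of a word reducing to the empty word; for the groups of interest the same excision is available (for Coxeter presentations the free moves coincide with the balanced relations $s^2=1$ and are already handled, while for right-angled Artin groups the matching is the commutation-respecting pairing of inverse letters). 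Checking that such a matched pair can always be removed without destroying triviality is the one step demanding genuine care; granting it, the displayed identity holds, free moves preserve $|\cdot|_\times$, and chaining the two types of moves yields $|u|_\times=|v|_\times$.
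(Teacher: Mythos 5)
Your overall architecture matches the paper's: reduce to a chain of elementary moves between words representing the same element, dispose of the relation-substitution moves by Lemma~\ref{L:balanced}, and treat the free insertion of a cancelling pair as a separate move. You have correctly isolated where the content lies --- Lemma~\ref{L:balanced} says nothing about the length-changing move $xy\leftrightarrow xss^{-1}y$ --- and your inequality $|xss^{-1}y|_\times\le|xy|_\times$ together with the two easy cases (the maximal trivial subword contains both inserted letters or neither) is fine.

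The remaining case is a genuine gap, and you concede as much (``granting it''). The claim that the surviving inserted letter ``must cancel against some $s^{-1}$ occurring elsewhere in $\sigma$'' and that excising this matched pair preserves triviality presupposes a canonical pairing of cancelling letters in a word representing the identity. Such a pairing exists in a free group (the non-crossing matching coming from free reduction), but in a general balanced presentation there is no such structure; in a Coxeter group a letter of a trivial word has no well-defined partner, and your remark that there the free moves ``coincide with the balanced relations $s^2=1$ and are already handled'' is incorrect, since the relation $ss=\emptyset$ is \emph{not} balanced in the paper's sense (only $s=s^{-1}$ is), so inserting a cancelling pair is not an instance of Lemma~\ref{L:balanced} either. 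Nor can the step be finessed by a presentation-free argument: in the Baumslag--Solitar presentation of Example~\ref{E:baumslag-solitar}, $\langle x,t\,|\,x^5=tx^2t^{-1}\rangle$, the word $tx^2t^{-1}x^{-4}$ has cancelation length $2$ (no single deletion trivializes it), while inserting $xx^{-1}$ after $tx^2t^{-1}x^{-2}$ produces a word of cancelation length $1$ (delete the inserted $x$ to reach $tx^2t^{-1}x^{-5}=1$). Hence the inequality $|xy|_\times\le|xss^{-1}y|_\times$ genuinely requires balancedness of \emph{all} relations, a hypothesis your case analysis never invokes. To be fair, the paper's own proof is silent at exactly the corresponding point (it asserts $|xr^{-1}ry|_\times=|xy|_\times$ without comment), so you have put your finger on the one step carrying the real difficulty; but your proposal does not close it.
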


\begin{remark}
The following proof is incomplete. We present a proof of a special
case in the erratum in Section \ref{S:erratum} at the end of the paper.
\end{remark}

\begin{proof}
Suppose that $v=xy$ and $w=xr^{-1}ty$, where $r=t$ is
a relation from $R$ and $x,y$ are any words. Then we have that
\begin{align*}
|w|_{\times}&=|xr^{-1}ty|_{\times}\\
&=|xr^{-1}ry|_{\times}\\
&=|xy|_{\times}=|v|_{\times},
\end{align*}
where the second equality follows from Lemma \ref{L:balanced}.
If the words $v$ and $w$ represent the same element in $G$
then $w$ can be obtained from $v$ be performing a sequence of
the operations above. This implies the statement.
\end{proof}

\begin{example}\label{E:baumslag-solitar}
Let $G=\langle x,t\,|\, x^5=tx^2t^{-1}\rangle$ be a Baumslag-Solitar
group. In this case the cancelation length is not well defined
since, for example, the cancelation lengths of $x^5$ and of
$tx^2t^{-1}$ are distinct but these words represent the
same element.
\end{example}

\begin{corollary}\label{C:isometric}
Let\/ $G$ be either a Coxeter group or a right angled Artin group
generated by a set\/ $S$. The inclusion $P_T\subset G$ of the
standard parabolic subgroup associated with a subset\/ $T\subset S$
is an isometry with respect to biinvariant word metrics associated
with the sets $T$ and $S$.\qed
\end{corollary}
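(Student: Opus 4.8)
The plan is to translate the isometry statement into an equality of cancelation norms and then exploit the fact that both $G$ and its parabolic $P_T$ admit balanced presentations. By Proposition \ref{P:cancellation}, the biinvariant word norm $\|g\|_S$ in $G$ equals the cancelation norm $|g|_\times$ computed in $G$, and likewise $\|g\|_T$ in $P_T$ equals the cancelation norm computed in $P_T$. So the corollary reduces to showing, for every $g\in P_T$, that the cancelation norm of $g$ taken in $G$ agrees with the one taken in $P_T$. The relevant structural input is that, for both Coxeter groups and right angled Artin groups, the standard parabolic $P_T=\langle T\rangle$ is again a group of the same type on the generating set $T$ (a Coxeter group, resp. a right angled Artin group, on the induced data); hence by Example \ref{E:cox-raag} it too has a presentation all of whose relations are balanced, so Proposition \ref{P:balanced} applies inside $P_T$ as well as inside $G$.

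The main step is to pass to a common representative word. First I would fix $g\in P_T$ and choose a word $w$ \emph{in the alphabet} $T$ representing $g$; such a word exists precisely because $g$ lies in the parabolic subgroup. Applying Proposition \ref{P:balanced} in $G$ gives that $|g|_\times$ (in $G$) equals the cancelation length of $w$ relative to $G$, and applying Proposition \ref{P:balanced} in $P_T$ gives that $|g|_\times$ (in $P_T$) equals the cancelation length of $w$ relative to $P_T$. Thus both norms are now computed from the \emph{same} word $w$, and it only remains to compare the two cancelation-length problems for $w$.

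Here the comparison is immediate once the right observation is made: every word obtained from $w$ by deleting letters is still a word in the alphabet $T$, hence represents an element of $P_T\subseteq G$. Since the inclusion $P_T\hookrightarrow G$ is injective, such a subword equals the neutral element of $G$ if and only if it equals the neutral element of $P_T$. Therefore a given deletion trivializes $w$ in $G$ exactly when it trivializes $w$ in $P_T$; the two minimization problems defining the respective cancelation lengths are literally the same, and so their minima coincide. Chaining the equalities yields $\|g\|_S=|g|_\times^{G}=|w|_\times^{G}=|w|_\times^{P_T}=|g|_\times^{P_T}=\|g\|_T$, which is the asserted isometry.

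I expect the only genuine obstacle to be the injectivity of the parabolic embedding $P_T\hookrightarrow G$ together with the fact that $P_T$ carries a balanced presentation on $T$. For right angled Artin groups this is routine, since killing the generators outside $T$ defines a retraction $G\to P_T$ splitting the inclusion. For Coxeter groups the injectivity is the classical theorem of Tits that a standard parabolic subgroup $W_T$ is itself a Coxeter group embedding in $W$; I would simply cite this, after which the balancedness of the induced presentation follows from Example \ref{E:cox-raag}. Everything else is bookkeeping with Propositions \ref{P:cancellation} and \ref{P:balanced}.
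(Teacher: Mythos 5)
Your proposal is correct and is exactly the argument the paper intends: the corollary is stated with a \qed because it follows immediately from Proposition \ref{P:balanced} applied to the balanced presentations of $G$ and of $P_T$ (Example \ref{E:cox-raag}), together with the observation that deletions from a word in the alphabet $T$ stay in $P_T$ and that the parabolic inclusion is injective. Your write-up just makes this chain of reductions explicit; nothing is missing and nothing differs in substance.
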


\subsection{An algorithm for computing the cancelation norm on a free group}
\label{SS:algorithm}

\begin{lemma}\label{L:xw}
If $x$ is a generator of a free group $\B F_n$ and
$w\in \B F_n$ then
$$
\|xw\|=
\min\left\{1+\|w\|,\min\{\|u\|+\|v\|,\text{ where } w=ux^{-1}v\}\right\}.
$$
\end{lemma}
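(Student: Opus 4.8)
The goal is to compute the cancelation norm $\|xw\|$ in terms of simpler data, using the identification of cancelation norm with biinvariant word norm (Proposition~\ref{P:cancellation}). The plan is to prove the identity by establishing two inequalities, $\leq$ and $\geq$.

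The inequality $\|xw\|\leq$ (right-hand side) is the easy direction, which I would handle by exhibiting explicit trivializing sequences. For the term $1+\|w\|$, note that any trivializing sequence for $w$ can be extended to one for $xw$ by additionally deleting the leading letter $x$; since cancelation norm equals biinvariant word norm, this gives $\|xw\|\leq 1+\|w\|$. For the term $\|u\|+\|v\|$ whenever $w=ux^{-1}v$, the idea is that the leading $x$ of $xw$ can be paired with and canceled against the $x^{-1}$ sitting inside $w$, after which the remaining letters split into the prefix $u$ and the suffix $v$; trivializing $u$ and $v$ independently then trivializes the whole word, so $\|xw\|\leq\|u\|+\|v\|$. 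I expect this direction to be routine concatenation of trivializing sequences.

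The inequality $\|xw\|\geq$ (right-hand side) is the substantive direction and the main obstacle. Here I would start from an optimal trivializing sequence for the word $xw$ and analyze the fate of the initial letter $x$. There are two cases. In the first case the trivializing sequence deletes the letter $x$ itself; then the remaining deletions trivialize $w$, giving a trivializing sequence for $w$ of length $\|xw\|-1$, hence $\|xw\|\geq 1+\|w\|$ (recall the norm takes values that permit this bookkeeping). In the second case the letter $x$ is retained; then, since the resulting word must reduce to the identity in the free group, the retained $x$ must cancel in the free reduction against some retained letter $x^{-1}$ occurring later in $w$. This splits $w$ at that occurrence as $w=ux^{-1}v$, and the key point is that the retained letters inside the prefix (those contributing to $u$) and the retained letters inside the suffix (those contributing to $v$) must independently trivialize $u$ and $v$ respectively. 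Establishing this independence is the crux: I would argue that because $x$ and the matched $x^{-1}$ cancel against each other in the free group, no letter of the $u$-part can cancel against a letter of the $v$-part through them, so the deletions restricted to each part trivialize that part on its own, yielding $\|xw\|\geq\|u\|+\|v\|$.

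The essential difficulty in the second case is justifying that the matching structure of the free reduction respects the splitting $w=ux^{-1}v$. Concretely, in a reduced-to-trivial word the retained letters pair up in a noncrossing (properly nested) matching, and the letter $x$ matches exactly one $x^{-1}$; I would use the noncrossing property to conclude that every retained letter to the left of this $x^{-1}$ (other than the initial $x$) matches another letter to its left, and similarly on the right, so the two blocks cannot interact. Formalizing this noncrossing-matching argument — perhaps by induction on word length or by appealing to the Dyck-path structure of reductions to the identity in a free group — is where the real work lies, and it is the step I would write out most carefully.
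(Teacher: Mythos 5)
Your proposal is correct and follows essentially the same route as the paper's proof: the same case split on whether the leading $x$ is deleted or must cancel against some retained $x^{-1}$ in $w$, yielding the terms $1+\|w\|$ and $\|u\|+\|v\|$ respectively. The noncrossing-matching point you flag as the crux is exactly the step the paper's (very terse) proof asserts without justification, so writing it out carefully as you intend is a sound plan.
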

\begin{proof}
The sequence $x,x_1,\ldots,x_n$ is minimal trivializing for the
word $xw$ if and only if the sequence $x_1,\ldots,x_n$ is minimal
trivializing for the word $w$. This implies that if $x$ is contained
in a minimal trivializing sequence then $\|xw\|=1+\|w\|$.

Suppose that $x$ is not contained in a minimal sequence trivializing $xw$.
Then the word $w$ must contain a letter equal to $x^{-1}$ that is not
contained in a minimal trivializing sequence $x_1,\ldots, x_n$ for $w$
and with which $x$ may be canceled out.
This implies that $w = ux^{-1}v$ and there exists $k$ such that
the sequence $x_1,\ldots,x_k$ minimally trivializes $u$ and
$x_{k+1},\ldots,x_n$ minimally trivializes $v$.
This implies that
$$\|w\|=\|u\|+\|v\|.$$
\end{proof}


\begin{proof}[Proof of Theorem \ref{T:algo}]
Assume that we have a reduced word $v$ of standard length $k$
and we know biinvariant lengths of all its proper connected subwords.
We can compute $\|v\|$
in time $k$ by processing the word from the beginning to the end in order to find
patterns as in Lemma \ref{L:xw} and computing the minimum.

Let $w = w_1w_2 \ldots w_n$ be a reduced word written in the standard generators.
In order to compute $\|w\|$ we need to compute
biinvariant lengths of all its connected subwords $w_iw_{i+1}\ldots w_j$.
Thus we proceed as follows: first we compute biinvariant lengths of all words of standard length 3
(words of length 1 and 2 always have biinvariant lengths 1 and 2, respectively),
then biinvariant lengths of all words of standard length 4 and so on.

In order to find computational complexity of this problem
assume that we have computed biinvariant lengths of all connected subwords
of standard length less then $k$. There are no more then $n$ subwords of standard length $k$. Thus
to compute biinvariant length of all subwords of standard length $k$
we perform no more then $Cnk$ operations for some constant $C$.

Thus the complexity of our algorithm is
\begin{equation*}
\Sigma_{k=1}^n Cnk = O(n^3)
\end{equation*}
During computations we need to remember only
lengths of subwords. Since there are $O(n^2)$ subwords, we used $O(n^2)$ memory.
\end{proof}

\begin{remark}
There is no obvious algorithm computing the conjugation invariant norm even for groups
where the word problem is solvable. However, it follows from Proposition
\ref{P:balanced} that we can find an algorithm
for computing the conjugation invariant word norm for groups admitting a
presentation whose all relations are balanced and with solvable word problem. But even then, we need to check
all possible subsequences of the chosen word which makes the algorithm
exponential in time.
\end{remark}

\section{Quasiisometric embeddings}

\subsection{Quasiisometric embeddings of $\B Z^n$}\label{SS:Zn}
We say that a map
$$f\colon(X,d_X)\to (Y,d_Y)$$
is a quasiisometric embedding if $f$ is a quasiisometry on its image.

\begin{lemma}[\cite{BK}]\label{L:qm}
Suppose that $\dim Q(G)\geq n$. Then there
exist $n$ quasimorphisms $q_1,\ldots,q_n\in Q(G)$ and
$g_1,\ldots,g_n\in G$ such that $q_i(g_j)=\delta_{ij}$.
\end{lemma}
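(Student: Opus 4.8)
The plan is to reduce the statement to elementary linear algebra via the joint evaluation map. Since $\dim Q(G)\geq n$, I would first fix $n$ linearly independent homogeneous quasimorphisms $p_1,\ldots,p_n\in Q(G)$, whose existence is guaranteed by the dimension hypothesis. I would then package them into the evaluation map $E\colon G\to \B R^n$ defined by $E(g)=(p_1(g),\ldots,p_n(g))$. The whole point of the lemma is to produce a \emph{dual} basis to $g_1,\ldots,g_n$, so the goal is to arrange that the matrix of values $[p_i(g_j)]$ is invertible and then correct by its inverse.

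The crux is to show that the image $E(G)$ spans $\B R^n$. I would argue by contradiction: if $E(G)$ were contained in a proper linear subspace, then there would be a nonzero covector $(a_1,\ldots,a_n)$ annihilating it, that is $\sum_i a_i p_i(g)=0$ for every $g\in G$. But this says precisely that $\sum_i a_i p_i$ is the zero function on $G$, contradicting the linear independence of $p_1,\ldots,p_n$ in the vector space $Q(G)$. Hence $E(G)$ spans $\B R^n$, and I may choose $g_1,\ldots,g_n\in G$ so that the vectors $E(g_1),\ldots,E(g_n)$ form a basis; equivalently, the matrix $M$ with entries $M_{ij}=p_i(g_j)$ (so the $E(g_j)$ are its columns) is invertible.

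To finish, I would set $q_i=\sum_k (M^{-1})_{ik}\,p_k$. Because $Q(G)$ is a real vector space, each $q_i$ is again a homogeneous quasimorphism, so $q_i\in Q(G)$ as required. A direct computation then gives $q_i(g_j)=\sum_k (M^{-1})_{ik}\,p_k(g_j)=\sum_k (M^{-1})_{ik}M_{kj}=\delta_{ij}$, completing the construction.

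The only step carrying genuine content is the spanning argument, which is just the dual formulation of linear independence of functions; everything else is routine matrix algebra and the observation that $Q(G)$ is closed under linear combinations. I therefore expect no real obstacle, and this is presumably why the statement appears as a cited lemma from \cite{BK}; the main care needed is simply to keep the index conventions for $M$ and $M^{-1}$ consistent so that the final evaluation lands on the identity matrix rather than its transpose.
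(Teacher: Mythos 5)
Your proof is correct and complete: the spanning argument via the evaluation map $E$ is exactly the dual formulation of linear independence of the $p_i$ as functions on $G$, and since $Q(G)$ is a vector space the correction by $M^{-1}$ stays inside $Q(G)$. Note that the paper itself gives no proof of this lemma (it is quoted from \cite{BK}), so there is nothing internal to compare against; your argument is the standard one and there is no gap.
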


\begin{theorem}\label{T:Zn->F2}
Suppose that $\dim Q(G)\geq n$.
Then there exists
a quasiisometric embedding $\B Z^n\to G$, where
$\B Z^n$ is equipped with the standard word metric and $G$ is equipped with the biinvariant word metric.
\end{theorem}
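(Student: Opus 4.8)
The plan is to use the $n$ quasimorphisms provided by Lemma~\ref{L:qm} to build an explicit map and verify both the upper and lower quasiisometry bounds separately. By Lemma~\ref{L:qm}, since $\dim Q(G)\geq n$, there are homogeneous quasimorphisms $q_1,\ldots,q_n\in Q(G)$ and elements $g_1,\ldots,g_n\in G$ with $q_i(g_j)=\delta_{ij}$. The map I would define is
$$
f\colon \B Z^n\to G,\qquad f(a_1,\ldots,a_n)=g_1^{a_1}g_2^{a_2}\cdots g_n^{a_n}.
$$
The goal is to show that for the standard word metric $d$ on $\B Z^n$ and the biinvariant word metric $\B d$ on $G$, there are constants so that $f$ is a quasiisometric embedding onto its image.

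The upper bound is the routine direction. Since each generator $g_i$ of $\B Z^n$ maps to the single element $g_i$ of bounded norm, and $\B d$ is left-invariant, a word of length $k$ in $\B Z^n$ maps to a product that can be written as a product of $k$ conjugates-worth of bounded pieces; concretely $\|f(a)\|\leq \sum_i |a_i|\cdot\max_j\|g_j\| = C\,\|a\|_1$, so $\B d(f(a),f(b))\leq C\,d(a,b)$ for a suitable constant $C$. No real obstacle arises here beyond invoking left-invariance of the biinvariant metric and subadditivity of the norm.

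The lower bound is where the quasimorphisms do the work, and it is the conceptual heart of the argument. The key input is the fact, recalled in the introduction, that each homogeneous quasimorphism $q_i$ is Lipschitz with respect to the biinvariant word metric: there is a constant $L$ with $|q_i(x)-q_i(y)|\leq L\,\B d(x,y)$ for all $x,y$ (this is the cited \cite[Lemma 3.6]{MR2819193}). Applying the homogeneous-quasimorphism defect estimate to the product $f(a)=g_1^{a_1}\cdots g_n^{a_n}$ together with $q_i(g_j^{a_j})=a_j\,q_i(g_j)=a_j\,\delta_{ij}$, I would show that $q_i(f(a))$ equals $a_i$ up to an additive error bounded by a constant depending only on $n$ and the defects $A_1,\ldots,A_n$. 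Consequently the vector $\big(q_1(f(a)),\ldots,q_n(f(a))\big)$ approximates $(a_1,\ldots,a_n)$ within a uniformly bounded $\ell^\infty$-distance, so the $n$-tuple of quasimorphisms gives a coarse left inverse to $f$. Combining this with the Lipschitz property yields
$$
d(a,b)=\|a-b\|_1\leq n\,\max_i|a_i-b_i|\leq n\big(2D+\max_i|q_i(f(a))-q_i(f(b))|\big)\leq n\big(2D+L\,\B d(f(a),f(b))\big),
$$
where $D$ is the additive approximation error, which is exactly the lower quasiisometry bound.

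The main obstacle to watch is controlling the additive error when evaluating a single quasimorphism on the length-$n$ product, since the quasimorphism defect accumulates across the $n$ factors; the point is that this accumulation is bounded by a constant independent of the exponents $a_j$ because homogeneity makes $q_i(g_j^{a_j})$ exactly linear in $a_j$, so only the $n-1$ "splitting" defects contribute. Once that uniform bound is in place, both inequalities assemble directly into the quasiisometric embedding. I would finish by noting that injectivity of $f$ (hence that it is a genuine embedding and not merely a coarse map) follows from the lower bound, which forces $f(a)=f(b)$ to imply $d(a,b)=0$.
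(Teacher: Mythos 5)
Your proposal is correct and follows essentially the same route as the paper: the same map $(a_1,\ldots,a_n)\mapsto g_1^{a_1}\cdots g_n^{a_n}$ built from the dual systems of Lemma~\ref{L:qm}, the same upper bound via $\max_j\|g_j\|$, and the same lower bound via the Lipschitz property of homogeneous quasimorphisms together with the uniformly bounded defect accumulated over the $n$ factors. No substantive difference from the paper's argument.
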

\begin{proof}
Let $q_1,\ldots,q_n\colon G\to \B R$ be
linearly independent homogeneous quasimorphisms
and let $g_1,\ldots,g_n\in G$ be such that
$q_i(g_j)=\delta_{ij}$, where $\delta_{ij}$ is
the Kronecker delta.

We define $\Psi\colon \B Z^n\to G$ by
$\Psi(k_1,\ldots,k_n)=g_1^{k_1}\cdots g_n^{k_n}$
and observe that
$$
\left\|\prod_ig_i^{k_i}\right\|\leq c\sum_i|k_i|,
$$
where $c=\max_i\|g_i\|$.
On the other hand, for every $j\in \{1,...,n\}$
we have
$$
c_j\left\|\prod_ig_i^{k_i}\right\|
\geq \left|q_j\left(\prod_ig_i^{k_i}\right)\right|
\geq |k_j|-nd_j,
$$
where $d_j$ is the defect of the quasimorphism $q_j$
and $c_j$ is its Lipschitz constant. Taking
$C:=\max\{c,nc_1,...,nc_n\}$ and $D:=C\sum_i nd_i$
and combining the two
inequalities we obtain
$$
\frac{1}{C}\sum_i|k_i|-D \leq \left\|\prod_ig_i^{k_i}\right\|
\leq C\sum_i|k_i|.
$$
\end{proof}

It follows from the above theorem that if the
space of homogeneous quasimorphisms of a group
$G$ is infinite dimensional then there exists
a quasiisometric embedding $\B Z^n\to G$ for
every natural number $n\in \B N$.

\begin{examples}\label{E:Zn->G}
Groups for which the space of homogeneous quasimorphisms
is infinite dimensional include:
\begin{enumerate}
\item
a nonabelian free group $\B F_m$ \cite{Brooks},
\item
Artin braid groups on 3 and more strings, and braid groups of a hyperbolic surface \cite{MR1914565},
\item
a non-elementary hyperbolic group \cite{EF},
\item
a finitely generated group which satisfies the small cancelation condition $C'(1/12)$ \cite{AD},
\item
mapping class group of a surface of positive genus  \cite{MR1914565},
\item
a nonabelian right angled Artin group \cite{MR2874959},
\item
groups of Hamiltonian diffeomorphisms of compact orientable surfaces \cite{EP,GG}.
\end{enumerate}
\end{examples}

\subsection{Embeddings of trees}\label{SS:tree}

\begin{theorem}\label{T:cube}
There is an isometric embedding $\square^{\infty}\to \B F_2$
of the vertex set of the infinite dimensional unit cube
with the $\ell^1$-metric
into the
free group on two generators with the biinvariant word metric coming from the standard generators.
\end{theorem}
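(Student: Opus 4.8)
The plan is to realize the vertices of $\square^{\infty}$ as ordered products of conjugates of a single generator. Index the cube coordinates by $\B N$, so that a vertex is a finite set $A\subset\B N$ and the $\ell^1$-distance between vertices is the Hamming distance $|A\triangle B|$. Put $c_i:=y^{i}xy^{-i}$ for $i\geq 1$; these lie in the kernel of the homomorphism $\B F_2\to\B Z$ sending $x\mapsto 0,\ y\mapsto 1$, and they freely generate a free subgroup (a subset of the classical free basis of that kernel). Define $\Phi(A):=c_{i_1}c_{i_2}\cdots c_{i_k}$, the product over $A=\{i_1<\cdots<i_k\}$ in increasing order. To locate the target value $|A\triangle B|$ conceptually I would apply the lamplighter homomorphism $\rho\colon\B F_2\to\B Z\wr\B Z$ with $x\mapsto e_0$ and $y$ acting as the shift; then $\rho(c_i)=e_i$ and $\rho(\Phi(A)\Phi(B)^{-1})=\sum_{i\in A}e_i-\sum_{i\in B}e_i$, whose lit lamps are exactly the coordinates of $A\triangle B$. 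It then suffices to prove $\|\Phi(A)\Phi(B)^{-1}\|=|A\triangle B|$; positivity of this quantity for $A\neq B$ gives injectivity, so $\Phi$ is an isometric embedding.

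For the upper bound I would use the identification of the biinvariant norm with the cancelation norm (Proposition~\ref{P:cancellation}), so that it suffices to delete $|A\triangle B|$ letters from the explicit word $w=\Phi(A)\Phi(B)^{-1}$ in $x,y$ to trivialize it. Delete precisely the $x$-letter of $w$ carried by each level in $A\triangle B$ (the single $x$ at each level of $A\setminus B$ and the single $x^{-1}$ at each level of $B\setminus A$). The surviving $x$-letters then sit at the common levels $A\cap B=\{a_1<\cdots<a_p\}$, and the residual word is $c_{a_1}\cdots c_{a_p}c_{a_p}^{-1}\cdots c_{a_1}^{-1}=1$, which reduces to the identity by the nested cancelation of the free basis $\{c_i\}$. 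Hence $|w|_\times\leq|A\triangle B|$, which gives $\|\Phi(A)\Phi(B)^{-1}\|\leq|A\triangle B|$.

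The lower bound is the main obstacle. Here I would argue directly on a minimal trivializing deletion: let $w'$ be the surviving (trivial) subword, and freely reduce it. Because in $w$ every $x$ precedes every $x^{-1}$, the surviving $x^{\pm}$-letters of $w'$ are forced into a non-crossing, fully nested matching, pairing the surviving letters of $A$ in increasing order with the surviving letters of $B$ in increasing order. For a matched pair to cancel, the two letters must sit at equal levels in $w'$, where the level of a letter is the $y$-exponent of the prefix preceding it. The key point to push through is that a level originally belonging to $A\triangle B$ can never be matched for free: extinguishing the corresponding lamp of $\rho(w')$ costs a deletion, and the $|A\triangle B|$ lamps require $|A\triangle B|$ distinct deletions. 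The delicate part — and where I expect the real work to lie — is bookkeeping the $y$-deletions: deleting a $y$ shifts the levels of all later $x$-letters simultaneously, so one must verify that such shifts can realign at most as many pairs as the number of $y$'s deleted, and therefore never undercut the count $a+b\geq|A\triangle B|$, where $a$ and $b$ are the numbers of deleted $x$- and $y$-letters. Making this exchange/monotonicity estimate precise, using the strict monotonicity of the level sequences on the $A$-side and the $B$-side, is the technical heart of the proof; once it is in place, the two bounds combine to give $\|\Phi(A)\Phi(B)^{-1}\|=|A\triangle B|$.
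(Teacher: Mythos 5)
Your upper bound is fine, but the lower bound is not merely the delicate part of the argument --- it is \emph{false} for your choice $c_i=y^ixy^{-i}$, so the map $\Phi$ is not an isometric embedding. Concretely, take $A=\{1,3\}$ and $B=\{2,4\}$, so $|A\triangle B|=4$, and
$$
\Phi(A)\Phi(B)^{-1}=c_1c_3c_4^{-1}c_2^{-1}=y\,x\,y^2\,x\,y\,x^{-1}\,y^{-2}\,x^{-1}\,y^{-2}.
$$
Deleting just two letters --- the single $y$ sitting between the second $x$ and the first $x^{-1}$, and one $y^{-1}$ from the final block --- leaves $y\,x\,y^2\,x\,x^{-1}\,y^{-2}\,x^{-1}\,y^{-1}$, which freely reduces to the empty word. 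Hence $\|\Phi(A)\Phi(B)^{-1}\|=2$ (it is at least $2$ because the element is nontrivial, the $c_i$ being part of a free basis of the normal closure of $x$, and has trivial image in $H_1(\B F_2)$, so it is not a conjugate of a generator), whereas the cube distance is $4$. This is exactly the failure mode you flagged: deleting one $y$ lowers the level of \emph{every} later $x$-letter by one, so a single $y$-deletion can realign two nested pairs at once and the cost per element of $A\triangle B$ drops below one; the ``at most as many pairs as the number of deleted $y$'s'' estimate you hoped to prove cannot hold. Note also that the lamplighter homomorphism can never supply a lower bound: lower bounds do not pull back along homomorphisms, and in $\B Z\wr\B Z$ every element of the commutator subgroup generates a bounded cyclic subgroup (Example \ref{E:bounded}(3)), so the lamp count is not dominated by any biinvariant word norm there.

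The paper's proof avoids this trap by making the conjugating powers grow much faster than linearly. It argues by induction on $n$: given an isometric $\psi_n\colon\square^n\to\B F_2$, it sets $\psi_{n+1}(v,1)=g\,\psi_n(v)$ with $g=b^{4k}ab^{-4k}$, where $k$ exceeds the standard length of every $\psi_n(v)$. Cancelling the new $a^{-1}$ in $\psi_n(v)\psi_n(w)^{-1}b^{4k}a^{-1}b^{-4k}$ against an $a$ from the prefix would then force at least $2k+1$ deletions of letters $b$, which already exceeds $\|\psi_n(v)\psi_n(w)^{-1}\|+1$; so every minimal trivializing sequence deletes that $a^{-1}$, and the cross-distances come out exactly right. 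Your global formula $\Phi(A)=\prod_{i\in A}c_i$ can be salvaged only by replacing $y^ixy^{-i}$ with $y^{k_i}xy^{-k_i}$ for a sufficiently fast-growing sequence $(k_i)$ --- which is the paper's induction in different clothing.
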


\begin{proof}
Let $\B F_2$ be the free group generated by elements $a$ and $b$ and let
$\square^n = \{0,1\}^n$ denote the n-dimensional cube.
Let $\square^n$ be embedded into $\square^{n+1}$ as $\square^n \times \{0\}$.
For an arbitrary isometric embedding
$$\psi_n \colon \square^n \to~\B F_2$$
we construct an extension to
$$\psi_{n+1}\colon~\square^{n+1}~\to~\B F_2$$
as follows. Take an element $g = b^{4k}ab^{-4k}$, where
$k > |\psi(v)|$ for every
$v \in \square^n$. Define $\psi_{n+1}(v,0) = \psi_n(v)$ and $\psi_{n+1}(v,1) = g\psi_n(v)$.
Since the multiplication from the left is an isometry of the biinvariant metric,
$\psi_{n+1}$ is an isometry on both $\square^n \times \{0\}$ and $\square^n \times \{1\}$.
Hence what we need to show is that
$$
d((v,0) , (w,1)) =
\|\psi_{n+1}(v,0)\psi_{n+1}(w,1)^{-1} \|
$$
for every $v$, $w \in \square^n$. From the definition of $\psi_{n+1}$
we have that
$$
\|\psi_{n+1}(v,0)\psi_{n+1}(w,1)^{-1} \| =
\|\psi_n(v)\psi_n(w)^{-1}b^{4k}a^{-1}b^{-4k}\|
$$
We shall show that every minimal sequence trivializing
$$\psi_n(v)\psi_n(w)^{-1}b^{4k}a^{-1}b^{-4k}$$
contains the last letter $a^{-1}$, thus has the length
$$\|\psi_n(v)\psi_n(w)^{-1}\| + 1 = d((v,0) , (w,1)).$$
To see that assume on the contrary,
that $a^{-1}$ is not in a minimal
trivializing sequence. Then it has to cancel out with
some letter $a$ in $\psi_n(v)\psi_n(w)^{-1}$. But
$|\psi_n(v)\psi_n(w)^{-1}| < 2k$, so in order to
make the cancelation possible, one has to cross out
at least $2k+1$ letters $b$ between $\psi_n(v)\psi_n(w)^{-1}$ and $a^{-1}$. Since
$$2k+1 > |\psi_n(v)\psi_n(w)^{-1}|+1 \geq \|\psi_n(v)\psi_n(w)^{-1}\|+1,$$
such trivializing sequence cannot be minimal.

Now take an arbitrary $\psi_0$ and construct a sequence of
isometries $\psi_n$. Then
$
\psi_{\infty} = \bigcup_{n=0}^{\infty} \psi_n
$
is an isometric embedding of $\square^{\infty}$.
\end{proof}

\begin{proof}[Proof of Theorem \ref{T:tree}]
Let $T$ be a locally compact tree with edges of unit length.
Then $T$ isometrically embeds into the cube $\square^{\infty}$ as follows.
Let $v$ be a vertex of $T$ and $w$ be a vertex of $\square^{\infty}$.
We map a star of $v$ isometrically into a star of $w$. We then continue
the procedure inductively. It is possible because the star of
any vertex of the cube has countably infinitely many edges.
\end{proof}

\section{Biinvariant geometry of cyclic subgroups}\label{S:cyclic}

\subsection{Bounded cyclic subgroups}\label{SS:bounded}

\begin{lemma}\label{L:bounded}
Let $\Gamma:=\left\langle x,t\,\big |\left[x,{}^t\!x\right] =1\right\rangle$.
The following identity holds in~$\Gamma$:
$$
[x,t]^n=\left[x^n,t\right].
$$
In particular, the cyclic subgroup generated by
$[x,t]$ is bounded by two (with respect to the
generating set $\left\{x^{\pm 1},t^{\pm 1}\right\}$).
\end{lemma}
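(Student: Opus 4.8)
The plan is to reduce the stated commutator identity to an abelian computation inside the subgroup generated by $x$ and its $t$-conjugate. Write $y := {}^t\!x = txt^{-1}$. The single defining relation of $\Gamma$ asserts precisely that $x$ and $y$ commute, so the subgroup $\langle x,y\rangle$ is abelian (a quotient of $\B Z^2$). The first step I would take is to rewrite the basic commutator in terms of $x$ and $y$: since $tx^{-1}t^{-1}=y^{-1}$, we get $[x,t]=xtx^{-1}t^{-1}=xy^{-1}$.

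Next I would exploit commutativity. Because $x$ and $y$ commute, the powers of $xy^{-1}$ are computed termwise: $[x,t]^n=(xy^{-1})^n=x^ny^{-n}$. Substituting back $y^{-n}=(tx^{-1}t^{-1})^n=tx^{-n}t^{-1}$ — here using that conjugation by $t$ is a homomorphism, so the internal $t^{-1}t$ pairs telescope — yields $x^ny^{-n}=x^n t x^{-n}t^{-1}=[x^n,t]$, which is exactly the claimed identity.

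For the boundedness statement I would argue directly from the definition of the conjugation invariant norm associated with $S=\{x^{\pm1},t^{\pm1}\}$. The identity just proved shows $[x,t]^n=[x^n,t]=(x^n t x^{-n})\cdot t^{-1}$. The first factor is a conjugate of the generator $t$, hence lies in $\overline{S}$, while $t^{-1}\in S\subseteq\overline{S}$; thus $[x,t]^n$ is a product of two elements of $\overline{S}$ for every $n$, giving $\|[x,t]^n\|\le 2$ uniformly.

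The computation is short, so there is no serious obstacle; the only thing to get right is the conceptual observation that the defining relator is exactly the commutativity of $x$ and $y={}^t\!x$, which linearizes the power $(xy^{-1})^n$. Everything else is bookkeeping with conjugations, and the boundedness conclusion is then immediate from the definition of $\overline{S}$.
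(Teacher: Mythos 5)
Your proof is correct and rests on exactly the same observation as the paper's: the defining relator says $x$ commutes with ${}^t\!x$, which makes $[x,t]^n=[x^n,t]$ and exhibits each power as a conjugate of $t$ times $t^{-1}$. The only cosmetic difference is that you compute $(xy^{-1})^n=x^ny^{-n}$ directly in the abelian subgroup $\langle x,{}^t\!x\rangle$, whereas the paper runs the equivalent calculation as an induction on $n$.
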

\begin{proof}
The identity is true for $n=1$. Let us assume that it is
true for some $n$. We then obtain that
\begin{align*}
[x,t]^{n+1}&=x^ntx^{-n} \left(t^{-1}xt\right) x^{-1}t^{-1}\\
&= x^n t \left(t^{-1}xt\right)x^{-n}x^{-1}t^{-1}\\
&= x^{n+1}tx^{-(n+1)}t^{-1}.	
\end{align*}
The statement follows by induction.
\end{proof}

\begin{examples}\label{E:bounded}
In the following examples we prove boundedness of a cyclic
subgroup of a group $G$ by constructing a relevant homomorphism
$\Psi\colon \Gamma\to G$.
\begin{enumerate}
\item
Let
$$\OP{BS}(p,q)=\langle a,t\,|ta^pt^{-1}=a^q\rangle$$
be the Baumslag-Solitar group, where $q>p$ are integers.
Let $\Psi\colon \Gamma\to \OP{BS}(p,q)$
be defined by $\Psi(x)=a^p$ and $\Psi(t)=t$. It follows
that the cyclic subgroup generated by
$[\Psi(t),\Psi(x)]$ is bounded.  Since $[t,a^p]=a^{p-q}$
we obtain that the cyclic subgroup generated by $a$ is bounded.
\item
Let $A\in \OP{SL}(2,\B Z)$ and let $G=\B Z\ltimes _A\B Z^2$
be the associated semidirect product. If
$A=\left(
\begin{smallmatrix}
a & b\\ c & d
\end{smallmatrix}
\right )$
then $G$ has the following presentation
$$
G =
\left\langle x,y,t\,\big |\,[x,y]=1, {}^t\!x=x^ay^c,{}^t\!y=x^by^d\right\rangle.
$$
Note that $\Psi\colon \Gamma \to G$ given by
$\Psi(t)=t$ and $\Psi(x)\in \B Z^2\subset G$ is a well defined
homomorphism.

If $A$ has two distinct real eigenvalues, for example if
$A$ is the Arnold cat matrix, then every element in the
kernel generates a bounded cyclic subgroup.
If $A\neq \OP{Id}$ has eigenvalues
equal to one then the center of $G$ is bounded
(cf. Theorem \ref{T:nilpotent} and \ref{T:solvable}).

\item
Consider the integer lamplighter group
$$
\B Z\wr \B Z = \B Z\ltimes \B Z^{\infty}.
$$
where $\B Z^{\infty}$ denotes the group of all
integer valued sequences $\{a_i\}_{i\in \B Z}$.
The generator $t$ of $\B Z$ acts by the shift and
hence the conjugation of $\{a_i\}$ by $t$ has the following
form
$$
t\{a_i\}t^{-1}=\{a_{i+1}\}.
$$
Since for every sequence $\{a_i\}$ there exists a
sequence $\{b_i\}$ such that $a_i=b_{i+1}-b_i$ we get that
$\{a_i\}=t\{b_i\}t^{-1}\{b_i\}^{-1}$. Let
$\Psi\colon \Gamma\to \B Z\wr \B Z$
be defined by $\Psi(x)=\{b_i\}$ and $\Psi(t)=t$.
This shows that every element in the commutator
subgroup of the lamplighter group generates a bounded
cyclic subgroup.
%
\item
Let $G = \OP{SL}(2,\B Z[1/2])$. Define
\begin{align*}
\Psi(x) &= \left(
\begin{smallmatrix}
1 & 1\\
0 & 1
\end{smallmatrix}
\right )\\
\Psi(t) &=
\left(
\begin{smallmatrix}
2 & 0\\
0 & 2^{-1}
\end{smallmatrix}
\right )\\
\end{align*}
It well defines a homomorphism since
$\Psi({}^t\!x) = \left (
\begin{smallmatrix}
1 & 4\\
0 & 1
\end{smallmatrix}
\right ).
$
Consequently we get that
$\left(
\begin{smallmatrix}
1 & -3\\
0 & 1
\end{smallmatrix}
\right )
=\Psi([x,t])$ generates a bounded cyclic subgroup.
More generally, it implies that the subgroups of
elementary matrices are bounded.
It is known that every element of $G$ can be written
as a product of up to five elementary matrices \cite{MR744962}
(see also \cite[Example 5.38]{Calegari}).
Hence we obtain that the whole group $G$ is
bounded.

\item \label{I:twist}
Let $B_k$ be the braid group on $k\geq 2$ strings and let
$i \colon B_n \to B_{2n}$ be a natural inclusion
on the first $n$ strings. Assume, that $g$ is in the
image of $i$. Let
$\Delta = (\sigma_1\ldots\sigma_{n-1})\ldots(\sigma_1\sigma_2)(\sigma_1)$
($\Delta$ is a half-twist Garside fundamental braid)
where $\sigma_i$'s are the standard Artin generators of the braid group $B_n$.
The conjugation
$\Delta g \Delta^{-1}$ flips $g$, thus
$[\Delta g \Delta^{-1},g] = e$.
For example, if $g = \sigma_1 \in B_4$, then
$\Delta g \Delta^{-1} = \sigma_3$ and
$\sigma_1\sigma_3^{-1}$ is
bounded in $B_4$.

\item \label{I:B_3}
Let $\Delta \in B_n$ be as above and let
$g=\sigma_{i_1}\dots\sigma_{i_k}\in B_n$ be any element.
The conjugation by $\Delta$ acts on $g$ as follows
$$
\Delta \sigma_{i_1}\sigma_{i_2}\dots \sigma_{i_k}\Delta^{-1}
=\sigma_{n-i_1}\sigma_{n-i_2}\dots \sigma_{n-i_k}.
$$
This implies that every braid of the form
$$
g=\sigma_{i_1}\sigma_{i_2} \dots \sigma_{n-i_2}^{-1}\sigma_{n-i_1}^{-1}
$$
is conjugate via $\Delta$ to its inverse.
Consequently, $[g^n,\Delta]=g^{2n}$ which implies that
the cyclic subgroup generated by $g$ is bounded by $2\|\Delta\|+\|g\|$.
For example, $\sigma_1\sigma_2^{-1}\in B_3$ generates a bounded
cyclic subgroup.
\item
It is a well-known fact that the center of $B_3$ is a cyclic
group generated by $\Delta^2$
(for definition of $\Delta$ see item (\ref{I:twist}) above).
We have a central extension
$$
1 \rightarrow \langle \Delta^2 \rangle
\rightarrow B_3 \xrightarrow{\Psi} \OP{PSL}(2,\B{Z})
\rightarrow 1
$$
where $\Psi(\sigma_1) = \left (
\begin{smallmatrix}
1 & 1\\
0 & 1
\end{smallmatrix} \right )$
and
$\Psi(\sigma_2) = \left (
\begin{smallmatrix}
1 & 0\\
-1 & 1
\end{smallmatrix} \right )$.
Denote
$$J = \Psi(\Delta) = \left (
\begin{smallmatrix}
0 & 1\\
-1 & 0
\end{smallmatrix} \right ).$$
Let $M \in \OP{PSL}(2,\B{Z})$ be a symmetric matrix.
It has two orthogonal eigenspaces (over $\B{R}$) with reciprocal eigenvalues.
The rotation $J$ swaps the eigenspaces which implies $M^J = M^{-1}$.
Moreover, there exists a braid $g$ in $B_3$ such that $g$ is conjugate
to $g^{-1}$ and $\Psi(g)=M$.  Indeed, any symmetric matrix is of
the form $[J,N]$ for some $N\in\OP{PSL}(2,\B Z)$.
Let $h$ be a lift of $N$ to $B_3$ and take $g=[\Delta,h]$.
Then
$$
\Delta^{-1}g\Delta=
h\Delta^{-1}h^{-1}\Delta=
h\Delta^{-1}\Delta^2h^{-1}\Delta^{-2}\Delta=
[h,\Delta]=g^{-1}.
$$
By the same argument as in item (\ref{I:B_3}) above, $g$ generates a bounded subgroup.
For example the image of an element $\sigma_1\sigma_2^{-1}$ is Arnold's cat matrix
$\left ( \begin{smallmatrix}
2 & 1\\
1 & 1
\end{smallmatrix} \right )$.
Since there are infinitely many conjugacy classes of symmetric matrices in
$\OP{PSL}(2,\B{Z})$, there are infinitely many conjugacy classes of bounded
cyclic subgroups in $B_3$. It should be compared to the group of pure braids
$P_3$, which is a finite index subgroup of $B_3$, but due to Theorem \ref{T:pure}
every nontrivial element in $P_3$ is undistorted.
\item
Let $f,h\colon M\to M$ be homeomorphisms of a manifold such that
$h(\OP{supp}(f))\cap\OP{supp}(f)=\emptyset$. Then the commutator
$[f,h]$ is bounded with respect to any biinvariant metric on
a group of homeomorphisms containing $f$ and $h$.
\end{enumerate}
\end{examples}

\subsection{Unbounded cyclic subgroups not detected by quasimor\-phisms}

Let $G$ be the simple finitely generated group constructed by
Muranov in \cite{MR2718128}. The following facts are proved in the Main Theorem
of his paper:
\begin{itemize}
\item
every cyclic subgroup of $G$ is distorted with respect to the
biinvariant word metric; in particular,
$G$ does not admit nontrivial homogeneous quasimorphisms
(Main Theorem (3)).
\item
$G$ contains cyclic subgroups unbounded with respect to the
commutator length (Main Theorem (1)); in particular, they
are unbounded with respect to the biinvariant word metric.
\end{itemize}

\subsection{Cyclic groups detected by homogeneous quasimorphisms}
\label{SS:quasi-res-real}

A group $G$ is called {\em quasiresidually real} if
for every element $g\in G$ there exists a homogeneous
quasimorphism $q\colon G\to \B R$ such that $q(g)\neq~0$.
It is equivalent to the existence of an unbounded quasimorphism
on the cyclic subgroup generated by $g$.

Free groups are quasiresidually real as well as torsion
free hyperbolic groups. It immediately follows that every
element in such a group is undistorted.
The purpose of this section is to prove the following results.

\begin{theorem}\label{T:raag}
A right angled Artin group is quasiresidually real.
\end{theorem}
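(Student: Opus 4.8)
The plan is to induct on the number of vertices of the defining graph $\Gamma$, writing $A_\Gamma$ for the right angled Artin group. The single crucial feature that makes the induction run---and which is exactly what fails for general Coxeter groups, as the authors note---is that every special (parabolic) subgroup of a RAAG is a retract: for a full subgraph $T\subseteq\Gamma$ the homomorphism $r_T\colon A_\Gamma\to A_T$ sending each generator outside $T$ to the identity restricts to the identity on $A_T$. Consequently, if $q$ is a homogeneous quasimorphism on $A_T$, then $q\circ r_T$ is a homogeneous quasimorphism on $A_\Gamma$ agreeing with $q$ on $A_T$. I will combine this with the standard fact that homogeneous quasimorphisms are conjugation invariant, so their values are unchanged under passage to a conjugate element.

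Fix a nontrivial $g\in A_\Gamma$ and split into two cases. First, suppose $g$ is conjugate into a proper special subgroup $A_S$, $S\subsetneq\Gamma$; write $g=hg'h^{-1}$ with $1\neq g'\in A_S$. By the inductive hypothesis there is a homogeneous quasimorphism $q$ on $A_S$ with $q(g')\neq 0$; then $\widetilde q:=q\circ r_S$ satisfies $\widetilde q(g)=\widetilde q(g')=q(g')\neq 0$ by conjugation invariance and the retraction property. This reduces everything to the complementary case in which $g$ is not conjugate into any proper special subgroup; note that then $A_{\OP{supp}(g)}$ cannot be proper, so $g$ has full support and we may assume $\Gamma=\OP{supp}(g)$.

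In that complementary case I distinguish the shape of $\Gamma$. If $\Gamma$ is complete then $A_\Gamma\cong\B Z^{\,|\Gamma|}$ and any nontrivial $g$ is detected by a coordinate projection to $\B Z$, which is a homomorphism and hence a homogeneous quasimorphism (this also covers the base case of a single vertex). If $\Gamma$ is a nontrivial join $\Gamma_1*\Gamma_2$ then $A_\Gamma\cong A_{\Gamma_1}\times A_{\Gamma_2}$; since $g$ is not contained in the proper special subgroup $A_{\Gamma_2}$, its image $g_1$ under the projection to $A_{\Gamma_1}$ is nontrivial, and composing that projection with a quasimorphism detecting $g_1$---which exists by induction, as $A_{\Gamma_1}$ has fewer vertices---detects $g$. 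The remaining, and genuinely hard, case is that $\Gamma$ is neither complete nor a join while $g$ is not conjugate into any proper special subgroup.

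For this last case the plan is to realise $g$ as a rank-one isometry of the $\OP{CAT}(0)$ cube complex on which $A_\Gamma$ acts (the universal cover of the Salvetti complex) and to invoke the homogeneous quasimorphisms attached to rank-one elements: an element that is not conjugate into any proper special subgroup of a non-join RAAG acts loxodromically with the required contracting behaviour, and the Behrstock--Charney construction \cite{MR2874959} then produces a homogeneous quasimorphism not vanishing on it. I expect this step to be the main obstacle: the other cases are formal bookkeeping with retractions and direct products, whereas here one must verify that ``$g$ not conjugate into a proper special subgroup'' together with ``$\Gamma$ not a join'' really forces the rank-one/contracting property, and that the associated counting quasimorphism is nonzero on $g$. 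Assembling the three cases completes the induction and shows that $A_\Gamma$ is quasiresidually real, so in particular every nontrivial cyclic subgroup is biinvariantly undistorted.
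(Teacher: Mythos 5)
Your architecture is the same as the paper's: use the retractions $r_T$ onto special subgroups to reduce to the case where no conjugate of $g$ lies in a join subgroup, then realise $g$ as a rank-one isometry of the universal cover of the Salvetti complex via Behrstock--Charney \cite[Theorem 5.2]{MR2874959} and extract a quasimorphism. Your bookkeeping cases (complete graph, join, conjugation into a proper special subgroup) are correct and amount to the paper's ``project to a factor repeatedly, then extend the resulting quasimorphism back along the retraction.'' Your reformulation of the hypothesis also checks out: if $\Gamma$ is not itself a join and $g$ is not conjugate into any proper special subgroup, then no conjugate of $g$ lies in a join subgroup, so the Behrstock--Charney criterion applies and the rank-one property is not in doubt.

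The genuine gap sits exactly where you flag the ``main obstacle,'' but it is not the rank-one property --- it is the passage from rank one to a nonvanishing quasimorphism. The Bestvina--Fujiwara construction (Lemma \ref{L:b-f}) carries a hypothesis you never address: no positive power of $g$ may be conjugate to a positive power of $g^{-1}$. This is not a technical convenience. If $xg^nx^{-1}=g^{-m}$ with $m,n>0$, then for every homogeneous quasimorphism $q$ conjugation invariance gives $nq(g)=-mq(g)$, so $q(g)=0$ and $g$ is undetectable; the chirality condition is therefore a necessary part of the claim, and rank one alone does not supply it. The paper closes this gap with Lemma \ref{L:not-tilda}: for a rank-one isometry of a proper $\OP{CAT}(0)$ space, the case $m\neq n$ is excluded by a translation-length estimate along the axis, and the case $m=n$ would force $g^n$ to commute with $x^2$ and generate with it a rank-two free abelian subgroup, so by the flat torus theorem the axis of $g$ would lie in a flat, contradicting rank one --- provided the group is torsion-free, which right angled Artin groups are. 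You need to insert this chirality verification (or an equivalent one) before invoking the counting quasimorphisms; with it, your proof coincides with the paper's.
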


\begin{theorem}\label{T:racg}
A commutator subgroup of a right angled Coxeter group is quasiresidually real.
\end{theorem}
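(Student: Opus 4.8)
The plan is to reduce the statement to the right-angled Artin case (Theorem \ref{T:raag}) by means of a geometric embedding. First I record the elementary observation that quasiresidual-realness is inherited by subgroups: if $H\leq G$ and $q\colon G\to\B R$ is a homogeneous quasimorphism, then the restriction $q|_H$ is again a homogeneous quasimorphism and $q|_H(h)=q(h)$ for all $h\in H$. Hence if $G$ is quasiresidually real then so is every subgroup of $G$. In view of Theorem \ref{T:raag} it therefore suffices to construct an embedding of the commutator subgroup $[W,W]$ of a right-angled Coxeter group $W=W_\Gamma$ into some right-angled Artin group.

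To produce such an embedding I would work with the Davis complex $\Sigma$ of $W_\Gamma$, a ${\rm CAT}(0)$ cube complex on which $W_\Gamma$ acts cocompactly with finite stabilizers (the finite standard parabolics). The subgroup $[W,W]$ is the kernel of the abelianization $W_\Gamma\to(\B Z/2)^{V(\Gamma)}$, so it has finite index $2^{|V(\Gamma)|}$. I would first check that $[W,W]$ is torsion-free: every torsion element of a Coxeter group is conjugate into a finite standard parabolic, in the right-angled case an elementary abelian $2$-group $(\B Z/2)^\sigma$ indexed by a clique $\sigma$, and every nontrivial element of such a subgroup has nonzero image in $(\B Z/2)^{V(\Gamma)}$; since this image is a conjugacy invariant, no nontrivial torsion element lies in $[W,W]$. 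Consequently $[W,W]$ acts freely and cocompactly on $\Sigma$, so the quotient $X=\Sigma/[W,W]$ is a compact nonpositively curved cube complex with $\pi_1(X)\cong[W,W]$; concretely $X$ is the canonical $(\B Z/2)^{V(\Gamma)}$-cover of $\Sigma/W_\Gamma$.

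The heart of the argument is to show that $X$ is \emph{special} in the sense of Haglund--Wise, i.e.\ that its hyperplanes are two-sided, embedded, and neither self-osculate nor inter-osculate. Two-sidedness and embeddedness are automatic in a ${\rm CAT}(0)$ cube complex; the content is that passing to the maximal elementary abelian $2$-cover resolves all osculation pathologies between hyperplanes, a phenomenon underlying the virtual specialness of Coxeter groups established by Haglund and Wise. Once $X$ is known to be special, their characterization of special cube complexes yields an embedding $[W,W]\cong\pi_1(X)\hookrightarrow A$ into a right-angled Artin group $A$, and the reduction in the first paragraph together with Theorem \ref{T:raag} completes the proof.

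I expect the verification of specialness of this particular cover to be the main obstacle. If one prefers to invoke only the virtual specialness of $W_\Gamma$, so that merely some finite-index subgroup $K\leq[W,W]$ is shown to embed into a right-angled Artin group, then one must additionally prove that quasiresidual-realness is inherited by a finite-index \emph{overgroup}. This is where torsion-freeness is used a second time: for nontrivial $g\in[W,W]$ a suitable power $g^N$ lies in $K$ and is still nontrivial, it is detected in $K$ by a homogeneous quasimorphism, and one transfers the detection back to $[W,W]$ using that positivity of $\OP{scl}$ is a commensurability invariant (via Bavard duality) together with a transfer argument on $H_1(\,\cdot\,;\B R)$ for the abelian part.
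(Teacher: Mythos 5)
Your reduction is sound as far as it goes: homogeneous quasimorphisms restrict to homogeneous quasimorphisms, so quasiresidual-realness passes to subgroups, and your torsion-freeness argument for $[W,W]$ is exactly the one the paper uses. But the load-bearing step --- that $\Sigma/[W,W]$ is special, i.e.\ that the commutator subgroup itself (and not merely some deeper finite-index subgroup) embeds into a right-angled Artin group --- is precisely what you do not prove, and you flag it yourself as the main obstacle. As written, the argument establishes the theorem only modulo an unverified instance of the Haglund--Wise machinery.

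The fallback you offer does not close this gap, because quasiresidual-realness is \emph{not} inherited by finite-index overgroups. Concretely, take $K=\langle a,b^2\rangle\cong\B Z^2$ of index two in the Klein bottle group $H=\langle a,b\mid bab^{-1}=a^{-1}\rangle$. Every nontrivial element of $K$ is detected by a homomorphism to $\B R$, yet no homogeneous quasimorphism of $H$ detects $a$: homogeneity and conjugation-invariance force $q(a)=q(bab^{-1})=q(a^{-1})=-q(a)$, so $q(a)=0$. (Indeed $a^{2n}=[a^n,b]$, so $\langle a\rangle$ is even bounded in $H$ --- this is the paper's Lemma \ref{L:bounded} in miniature.) The same mechanism defeats the proposed transfer: averaging a detecting quasimorphism over cosets cancels it exactly when some power of $g$ is conjugate in the overgroup to a power of $g^{-1}$, and ruling this out is the actual crux of the problem. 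The paper confronts it head-on rather than outsourcing it: it applies Caprace--Fujiwara to show that $g$ (or its projection off join factors) acts as a rank-one isometry on the Davis complex, uses torsion-freeness of $[W,W]$ via Lemma \ref{L:not-tilda} to exclude relations $xg^nx^{-1}=g^{-m}$, and then invokes Bestvina--Fujiwara (Lemma \ref{L:b-f}) directly, with no embedding into a right-angled Artin group needed. To salvage your route you must either actually prove specialness of the commutator cover, or supplement the finite-index transfer with an argument that no nontrivial element of $[W,W]$ has a positive power conjugate to a positive power of its inverse --- which is essentially the paper's Lemma \ref{L:not-tilda} again.
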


\begin{theorem}\label{T:pure}
A pure braid group on any number of strings is quasiresidually real.
\end{theorem}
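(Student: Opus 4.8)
The plan is to induct on the number of strands using the Fadell--Neuwirth splitting. Forgetting the last strand gives a split short exact sequence
\[
1 \longrightarrow \B{F}_{n-1} \longrightarrow P_n \xrightarrow{\ f\ } P_{n-1} \longrightarrow 1,
\]
whose kernel is free on $A_{1n},\dots,A_{n-1,n}$ and on which $P_{n-1}$ acts by pure symmetric automorphisms (each $A_{in}$ is sent to a conjugate of itself). The base case $P_2=\B Z$ is clear. Given a nontrivial $g\in P_n$, if $f(g)\neq 1$ then by the inductive hypothesis there is a homogeneous quasimorphism $q$ on $P_{n-1}$ with $q(f(g))\neq 0$, and $q\circ f$ is a homogeneous quasimorphism on $P_n$ detecting $g$ (precomposition with a homomorphism preserves both the defect bound and homogeneity). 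So we may assume $g$ lies in the free kernel $N:=\B{F}_{n-1}$. If $g\notin[N,N]$ its class in $H_1(N)=\B Z^{n-1}$ is nonzero; since this is a direct summand of $H_1(P_n)$, the corresponding homomorphism $P_n\to\B Z$ detects $g$. The only remaining case is $g\in[N,N]\setminus\{1\}$.

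For this case I would extend a quasimorphism across the splitting. Write $G=N\rtimes Q$ with $Q=P_{n-1}$ and action $w\cdot x:=\rho(w)(x)$, and suppose $q$ is a homogeneous quasimorphism on $N$ that is \emph{$Q$-invariant}, i.e.\ $q(w\cdot x)=q(x)$ for all $w\in Q$, $x\in N$. Setting $\hat q(x,w):=q(x)$ one computes, using $(x_1,w_1)(x_2,w_2)=(x_1\,(w_1\cdot x_2),\,w_1w_2)$,
\[
\bigl|\hat q\bigl((x_1,w_1)(x_2,w_2)\bigr)-\hat q(x_1,w_1)-\hat q(x_2,w_2)\bigr|
=\bigl|q\bigl(x_1\,(w_1\cdot x_2)\bigr)-q(x_1)-q(x_2)\bigr|\le D(q),
\]
where the defect bound for $q$ is combined with $Q$-invariance. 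Thus $\hat q$ is a quasimorphism on $P_n$; its homogenization $\bar q(h)=\lim_k \hat q(h^k)/k$ is homogeneous, and on $g=(g,1)\in N$ one gets $\bar q(g)=\lim_k q(g^k)/k=q(g)\neq 0$ by homogeneity of $q$ on $N$. Hence a single $Q$-invariant homogeneous quasimorphism on $N$ with $q(g)\neq 0$ already produces a homogeneous quasimorphism on $P_n$ detecting $g$.

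The hard part is therefore to produce, for each nontrivial $g\in[N,N]$, a homogeneous quasimorphism on the free group $N$ that is invariant under the $P_{n-1}$-action. I expect the obstruction to be concentrated on Brunnian braids: an element killed by forgetting any single strand is invisible to all the free quotients coming from the retractions $P_n\to P_S$ onto sub-pure-braid groups, so it must be separated by a genuinely $P_{n-1}$-invariant, non-homomorphism quasimorphism. This is where the specific geometry should enter, since the action preserves the peripheral conjugacy classes of the punctured disc, which is exactly the input needed to manufacture invariant counting quasimorphisms in the spirit of the Gambaudo--Ghys construction. A cleaner alternative, if available, is to realize $P_n$ as a subgroup of a group already known to be quasiresidually real (for instance via Theorem~\ref{T:raag}): restricting a homogeneous quasimorphism to a subgroup preserves homogeneity and still detects any element it detected, so quasiresidual reality passes to \emph{all} subgroups and the proof would end at once. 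Either way the essential difficulty is the extension of quasimorphisms from the free fibre to the total group, the same phenomenon flagged as open for Coxeter parabolics.
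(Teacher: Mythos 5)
Your reduction is sound as far as it goes: the Fadell--Neuwirth splitting, the pullback of quasimorphisms along $f$, the homological detection of $g\in N\setminus[N,N]$, and the extension lemma for a $Q$-invariant homogeneous quasimorphism on the kernel are all correct. But the argument stops exactly where the theorem begins: for $g\in[N,N]\setminus\{1\}$ you never produce the required $P_{n-1}$-invariant homogeneous quasimorphism on the free kernel, and you say so yourself. This is a genuine obstruction, not a routine verification: homogeneous quasimorphisms on the free group $\B F_{n-1}$ are invariant under inner automorphisms but not under the non-inner pure symmetric automorphisms by which $P_{n-1}$ acts, so it is not at all clear that the invariant ones separate $[N,N]$, and the Brunnian-type elements you flag are precisely the ones at risk. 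The fallback via Theorem~\ref{T:raag} is also not available as stated: $P_n$ is not known to embed in a right angled Artin group for $n\geq 4$ (it is not itself one), so the true observation that quasiresidual reality passes to subgroups has nothing to apply to here.

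The paper takes an entirely different route that bypasses the extension problem. It embeds $P_n$ into the pure mapping class group $\mathcal{PMCG}_g^n$ of a punctured closed hyperbolic surface (Birman), passes to a power of $\gamma$ admitting a Nielsen--Thurston decomposition into commuting pieces supported in disjoint subsurfaces, uses biorderability of $P_n$ (Lemma~\ref{L:biorder}) to show every piece is chiral, and uses that pure braids fix the marked points pointwise and that the pieces have disjoint supports to rule out conjugacies between nontrivial powers of distinct pieces; the Bestvina--Bromberg--Fujiwara criterion (Lemma~\ref{L:fuji-mcg}) then yields a homogeneous quasimorphism on $\mathcal{PMCG}_g^n$ nonzero on $\gamma$, which is restricted to $P_n$. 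To salvage your inductive scheme you would have to solve the invariant-extension problem for the free fibre, which, as you note, is of the same nature as the extension question for Coxeter parabolics recorded as open in the introduction.
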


We need to introduce some terminology and state some lemmas
before the proof. The definitions and basic properties of rank-one elements
can be found in \cite{MR2507218}.

\begin{lemma}[Bestvina-Fujiwara]\label{L:b-f}
Assume that\/ $G$ acts on a proper $\OP{CAT}(0)$ or hyperbolic space $X$ by isometries
and $g \in G$ is a rank-one isometry.
If no positive power of\/ $g$ is conjugate to a positive power\/ of $g^{-1}$
then there is a homogeneous quasimorphism $q\colon G\to \B R$ which is
nontrivial on the cyclic subgroup generated by $g$.
\end{lemma}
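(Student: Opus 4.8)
The plan is to reproduce the Bestvina--Fujiwara counting quasimorphism, exploiting the fact that a rank-one isometry has a \emph{contracting} (Morse) axis, so that the part of $X$ near this axis and its $G$-translates behaves like a Gromov hyperbolic space even when $X$ is only $\OP{CAT}(0)$. Let $\gamma\colon\B R\to X$ be an axis of $g$, translated by $g$ through its translation length $\ell>0$. Fix a large constant $L$ and let $w$ denote the oriented segment $\gamma([0,L])$; write $\bar w$ for the same segment traversed backwards, which is a segment of the axis of $g^{-1}$. Fix a basepoint $x_0\in\gamma$. For points $a,b\in X$ and an oriented segment $u$, let $|a,b|_u$ be the maximal number of pairwise disjoint subsegments of a fixed geodesic $[a,b]$ each lying within Hausdorff distance $\epsilon$ of an orientation-respecting $G$-translate of $u$, where $\epsilon$ exceeds the Morse/contraction constant of $\gamma$.

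With this counting I would set
$$
c(h):=|x_0,hx_0|_w-|x_0,hx_0|_{\bar w},
$$
and define the candidate quasimorphism as its homogenization $q(h):=\lim_{n\to\infty} c(h^n)/n$.

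The main step is to verify that $c$ is a quasimorphism, i.e. that $|c(h_1h_2)-c(h_1)-c(h_2)|$ is bounded independently of $h_1,h_2$. I would work with the geodesic triangle on $x_0$, $h_1x_0$, $h_1h_2x_0$. Using the contracting property of rank-one geodesics, any translate of $w$ closely fellow-travelled by the long side $[x_0,h_1h_2x_0]$ is already fellow-travelled, up to bounded error, by one of the two short sides, and conversely a copy on a short side survives on the long side; counting on the short sides agrees, up to a universal additive constant, with $c(h_1)$ and (by left-invariance of the count) with $c(h_2)$. This yields the quasimorphism inequality with defect depending only on $L$, $\epsilon$, and the contraction constant. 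I expect this to be the principal obstacle: it requires the Morse lemma for rank-one axes to govern how copies of a fixed axis-segment are inherited along the sides of a possibly non-thin geodesic triangle, the property that substitutes here for Gromov hyperbolicity.

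It remains to show $q(g)\neq0$. For the powers $g^n$ the geodesic $[x_0,g^nx_0]$ fellow-travels $\gamma([0,n\ell])$, which for large $n$ contains about $n\ell/L$ disjoint oriented copies of $w$, so $|x_0,g^nx_0|_w$ grows linearly in $n$. The hypothesis that no positive power of $g$ is conjugate to a positive power of $g^{-1}$ is exactly what keeps the opposing term $|x_0,g^nx_0|_{\bar w}$ bounded: a copy of $\bar w$ along $[x_0,g^nx_0]$ forces a long fellow-travelling between $\gamma$ and a translate of the $g^{-1}$-axis, and since two rank-one axes that fellow-travel for long enough must be translates matching, this would identify a positive power of $g$ with a positive power of $g^{-1}$ after conjugation, contrary to assumption. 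Hence $c(g^n)\geq\kappa n$ for some $\kappa>0$ and all large $n$, so $q(g)=\lim c(g^n)/n\geq\kappa>0$, exhibiting the required homogeneous quasimorphism nontrivial on $\langle g\rangle$.
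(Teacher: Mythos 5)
Your proposal is correct and is essentially the paper's own argument: both construct the Bestvina--Fujiwara counting quasimorphism attached to a long segment of the contracting (rank-one) axis, subtract the count of reversed copies, homogenize, and use the hypothesis that no positive power of $g$ is conjugate to a positive power of $g^{-1}$ (together with properness and the Morse property of the axis) to show the reversed count stays bounded while the forward count grows linearly on $\langle g\rangle$. The only cosmetic difference is that the paper packages the count as $c_g(x,y)=\inf_\alpha\left(|\alpha|-|\alpha|_g\right)$ over piecewise geodesic paths, with $\sigma=[x_0,gx_0]$ and a power $g^k$ playing the role of your long segment $w$, and defers the verification you sketch to the proof of Theorem 6.3 in Bestvina--Fujiwara.
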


\begin{proof}
Let $x_0\in X$ be the basepoint and $\sigma = [x_0, gx_0]$ be a geodesic interval.
If $\alpha$ is a piecewise geodesic path in $X$ then
let $|\alpha|_g$ be the maximal number of nonoverlaping translates
of $\sigma$ in $\alpha$ such that every subpath of $\alpha$ which connects
two consecutive translates of $\sigma$ is a geodesic segment.
Let $c_g\colon G\times G\to \B R$
be defined by
$$
c_g(x,y) := \inf_{\alpha} (|\alpha| - |\alpha|_g),
$$
where $\alpha$ ranges over all piecewise geodesic paths from $x$ to $y$.

Let $\Psi_g\colon G\to \B R$ be defined by
$$\Psi_g(h)=c_g(x_0,h(x_0)) - c_g(h(x_0),x_0)$$
and it follows from \cite[the proof of Theorem 6.3]{MR2507218} that there exists
$k>~0$ such that $\Psi_{g^k}$ is unbounded on the
cyclic group generated by $g$. Homogenizing $\Psi_{g^k}$
yields a required quasimorphism $q\colon G\to~\B R$.
\end{proof}

\begin{lemma}\label{L:not-tilda}
Let $G$ be a group acting on a proper
$\OP{CAT}(0)$ space $X$ by isometries.
Assume that $g \in G$ is a rank one isometry.
Then
$$xg^nx^{-1}\neq g^{-m}$$
for all $x\in G$ and $m,n>0$ provided that $m \neq n$.
If $G$ is torsion free the above holds also if $m=n$.
\end{lemma}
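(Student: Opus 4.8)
The plan is to treat the two assertions separately, using translation length for the generic case $m\neq n$ and the boundary dynamics of rank-one isometries for the delicate case $m=n$. Throughout recall that a rank-one isometry $g$ is axial, so its translation length $\ell(g)=\inf_{p\in X}d(p,gp)$ is positive and attained on an axis, that $\ell$ is a conjugacy invariant, and that $\ell(g^k)=|k|\,\ell(g)$; in particular $\ell(g^{-m})=m\,\ell(g)$.

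For $m\neq n$ the argument is immediate. Suppose $xg^nx^{-1}=g^{-m}$ with $m,n>0$. Applying $\ell$ and using its conjugation invariance gives $n\,\ell(g)=\ell(g^n)=\ell(xg^nx^{-1})=\ell(g^{-m})=m\,\ell(g)$, and since $\ell(g)>0$ this forces $m=n$. Hence whenever $m\neq n$ no such $x$ can exist, and this part needs no extra hypotheses on the action.

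For the torsion-free case I would argue by contradiction from $xg^nx^{-1}=g^{-n}$ with $n>0$. A rank-one isometry has exactly two fixed points $g^{+},g^{-}\in\partial X$ (attracting and repelling), with conjugation acting by $(h\phi h^{-1})^{\pm}=h\cdot\phi^{\pm}$. Since $g^n$ and $g^{-n}$ share the pair $\{g^{+},g^{-}\}$ but exchange the attracting and repelling roles, comparing the fixed points on the two sides of $xg^nx^{-1}=g^{-n}$ yields $x g^{+}=g^{-}$ and $x g^{-}=g^{+}$; thus $x$ lies in the stabiliser $E(g)\subset G$ of the pair $\{g^{+},g^{-}\}$, and $x^2=x\,x$ fixes each endpoint and commutes with $g^n$. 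The key structural input, from the rank-one theory of \cite{MR2507218}, is that $E(g)$ is virtually cyclic and contains $\langle g\rangle$ with finite index. Granting this, a torsion-free virtually cyclic group is infinite cyclic, so $E(g)=\langle h\rangle$ is abelian; then $x$ commutes with $g^n$, giving $g^{-n}=xg^nx^{-1}=g^n$, i.e. $g^{2n}=\OP{1}_G$, which contradicts $\ell(g^{2n})=2n\,\ell(g)>0$.

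The main obstacle is precisely the virtual cyclicity of $E(g)$, and I would reduce it to controlling the elliptic part of the axis stabiliser. Here one analyses $\OP{Min}(g^n)=Y\times\B R$ (the minimal displacement set, on whose $\B R$-factor $g^n$ translates): rank-one forbids a ray in $Y$, and properness then makes $Y$ bounded, so $Y$ has a unique circumcenter fixed by every isometry of $Y$, pinning down a canonical axis preserved by $E(g)$ and showing $x$ is elliptic. Converting \emph{elliptic} into \emph{trivial} is exactly the step that needs the action to be properly discontinuous, so that point stabilisers are finite and hence, by torsion-freeness, trivial. This discreteness is genuinely essential: for a merely isometric action the conclusion can fail, since the (torsion-free) Klein-bottle group $\langle a,b\mid bab^{-1}=a^{-1}\rangle$ acts faithfully by isometries on the proper $\OP{CAT}(0)$ space $\B D\times\B R$ (with $\B D$ a flat disc) via $a=(\OP{id},\,t\mapsto t+1)$ and $b=(\rho_\theta,\,t\mapsto -t)$ for irrational $\theta$, with $a$ rank-one and $bab^{-1}=a^{-1}$. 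Thus part $2$ must be read under the standing properness assumption of the framework in which rank-one isometries are being used.
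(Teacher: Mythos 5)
Your treatment of the case $m\neq n$ is correct and is essentially the paper's argument in a cleaner package: the paper estimates the displacement of a point of the axis under $g^{-kn}$ and under $xg^{km}x^{-1}$ by the triangle inequality, which is just a hands-on verification that translation length is a conjugacy invariant and is homogeneous on powers of an axial isometry. As you note, this half uses neither torsion-freeness nor any properness assumption.

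For $m=n$ your route is genuinely different from the paper's. The paper deduces that $x^{2}$ commutes with $g^{n}$, uses torsion-freeness to see that $\langle g^{n},x^{2}\rangle$ is free abelian of rank two, and then invokes the Flat Torus Theorem to place an axis of $g^{n}$ in a $2$-flat, contradicting rank one. You instead use North--South dynamics to put $x$ in the stabiliser of $\{g^{+},g^{-}\}$ and aim to make $x$ elliptic via the splitting $\OP{Min}(g^{n})=Y\times\B R$ and the circumcentre of $Y$; note that this circumcentre argument already finishes the proof once $x$ is known to have finite order, so the full virtual cyclicity of $E(g)$ is not really needed. The substantive point you raise is correct and important: both arguments require the action of $G$ (or at least of the relevant abelian subgroup) on $X$ to be proper, not merely the space --- the Flat Torus Theorem needs a proper action by semisimple isometries, and your step ``elliptic $\Rightarrow$ finite order $\Rightarrow$ trivial'' needs finite point stabilisers. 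Your Klein-bottle example on $\B D\times\B R$ does appear to refute the second assertion as literally stated: there are no flat half-planes in $\B D\times\B R$ at all, so $a$ is rank one, the group is torsion-free, and $bab^{-1}=a^{-1}$; and it breaks the paper's proof at exactly the point where the Flat Torus Theorem is applied to the non-proper action of $\langle a,b^{2}\rangle$. The lemma should therefore carry the additional hypothesis that the action is properly discontinuous. This is harmless for the paper's applications (geometric actions on Davis complexes and on universal covers of presentation complexes of right angled Artin groups), but it is a genuine correction to the statement, and your write-up should make the added hypothesis explicit rather than leaving part of the argument conditional on ``granting'' the structure of $E(g)$.
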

\begin{proof}
Suppose otherwise that there exists
$x\in G$ and $m,n$ such that
\begin{equation}\label{Eq:conj}
xg^nx^{-1}= g^{-m}.
\end{equation}
Assume that $m=n$. Then we have that
$$
x^2 g^n x^{-2} = xg^{-n}x^{-1} = g^n,
$$
which means that $g^n$ and $x^2$ commute.
Moreover, a group generated by $g^n$ and $x^2$ is of rank two.
To prove it assume otherwise that there exist $r \in G$
and $k,l$ such that $g^n = r^l$ and $x^2 = r^k$.
Take the $k$-th power of \eqref{Eq:conj}
$$
x g^{kn} x^{-1} = g^{-kn}.
$$
Together with $g^{kn} = r^{kl} = x^{2l}$ it gives that $x^{4l} = e$.
Which is a contradiction.

Since $g^n$ is an element of the free abelian subgroup of rank two,
it follows from the flat torus theorem that its axis lies in some flat.
Thus $g^n$, and consequently $g$, cannot be a rank one isometry.

Assume now that $m\neq n$ and take the $k$-th power of \eqref{Eq:conj}
$$
x g^{kn} x^{-1} = g^{km}.
$$
Let $P$ be a point on the axis $L\subset X$
on which $g$ acts by a translation by $d$ units.
Since $xg^{km}x^{-1}(P) = (g^{-1})^{kn}(P)$, the image of
a geodesic between $x^{-1}(P)$ and $g^{km}x^{-1}(P)$
with respect to  $x$ is contained in the axis $L$.

Let $l:=\OP{d}(x^{-1}(P),P)$,
where $\OP{d}$ is the distance function on $X$.
Applying the triangle inequality we get that
\begin{align*}
kmd&= \OP{d}\left(P,g^{km}(P)\right)\\
&\leq
\OP{d}\left(P,x^{-1}(P)\right) +
\OP{d}\left(x^{-1}(P),g^{km}x^{-1}(P)\right) +
\OP{d}\left(g^{km}x^{-1}(P),g^{km}(P)\right)\\
&= 2l + \OP{d}\left(x^{-1}(P), g^{km}x^{-1}(P)\right).
\end{align*}
This and a similar additional computation imply that
$$
kmd - 2l\leq
\OP{d}\left(P, xg^{km}x^{-1}(P)\right)\leq kmd +2l.
$$
On the other hand,  $\OP{d}\left((g^{-1})^{kn}(P),P\right)=knd$
which implies that
$$
(g^{-1})^{kn}(P)\neq xg^{km}x^{-1}(P)
$$
for $k$ large enough which contradicts \eqref{Eq:conj}.
\end{proof}

Let $A_{\Delta}$ be the right angled Artin group defined
by the graph $\Delta$. The presentation complex $X_{\Delta}$
of $A_{\Delta}$ is a two dimensional complex with one
vertex and with edges corresponding to generators and
two dimensional cells corresponding to relations. It is
a union of two dimensional tori. Its universal covering
$\widetilde{X}_{\Delta}$ is a $\OP{CAT}(0)$ square complex.
Let $\Delta'\subset \Delta$ be a full subgraph. Then
\begin{enumerate}
\item
the homomorphism $\pi\colon A_{\Delta}\to A_{\Delta'}$
defined by
$$
\pi(v):=
\begin{cases}
v & \text{ if } v\in \Delta'\\
1 & \text{ if } v\notin \Delta'
\end{cases}
$$
is well defined and surjective;
\item
every quasimorphism $q\colon A_{\Delta'}\to \B R$
extends to $A_{\Delta}$.
\end{enumerate}

If $\Delta'$ is a bipartite graph then the subgroup
$A_{\Delta'}\subset A_{\Delta}$ is called a {\em join}
subgroup.

\begin{proof}[Proof of Theorem \ref{T:raag}]
Let $g\in A_{\Delta}$ be a nontrivial element of a right angled Artin group.
Suppose that no conjugate of $g$ is contained in a join subgroup. Then,
according to Berhstock-Charney \cite[Theorem 5.2]{MR2874959}, $g$ acts on
the universal cover $\widetilde{X}_{\Delta}$ of the presentation complex
as a rank one isometry.

Thus, since $A_{\Delta}$ is torsion-free, we can apply Lemma \ref{L:not-tilda}
and consequently Lemma \ref{L:b-f} to $g$.

If $g$ is an element of a join subgroup then we project it to
one of the factors repeatedly until no conjugate of $g$
is contained in a join subgroup and then we apply the
above construction and extend the obtained quasimorphism
to $A_{\Delta}$.
\end{proof}

The right angled Coxeter group given by the graph $\Delta$ is a group
defined by the following presentation
$$
W_{\Delta} = \langle v \in \Delta |\thinspace v^2=1 ,
[v,v']=1 \text{ iff } (v,v') \text{ is an edge in } \Delta \rangle,
$$
As in the case of right angled Artin groups,
we have a well defined projection $\pi$ for an arbitrary full subgraph $\Delta'$
and the notion of a join subgroup.

The natural $\OP{CAT}(0)$ complex on which $W_{\Delta}$ acts geometrically is
the Davis cube complex $\Sigma_{\Delta}$ (see Davis \cite{MR2360474}
for more details).

\begin{proof}[Proof of Theorem \ref{T:racg}]
First we prove that the commutator subgroup $W_{\Delta}'$
of $W_{\Delta}$ is torsion-free. Let $g\in W_{\Delta}'$ be a torsion element.
By the $\OP{CAT}(0)$ property it stabilizes a cube in $\Sigma_{\Delta}$.
It follows from the definition of the Davis complex that
stabilizers of cubes are conjugate to spherical subgroups (i.e. subgroups
generated by vertices of some clique). Note that an abelianization of
$W_{\Delta}$ equals $\oplus_{v \in \Delta} \B{Z} / 2\B{Z}$
and spherical subgroups, as well as its conjugates project injectively
into the abelianization. Thus $g$ is a trivial element.

Now the argument is analogous to the proof of Theorem \ref{T:raag}.
Suppose that $g\in~W_{\Delta}'$ is an element such that no conjugate
of $g$ is contained in a join subgroup. According to \cite[Proposition 4.5]{MR2585575},
$g$ acts on $\Sigma_{\Delta}$ as a rank one isometry.
Now we apply Lemma \ref{L:not-tilda} and \ref{L:b-f} to $g$ and $W_{\Delta}'$.

If $g$ is in a join subgroup, we project $g$ together with $W_{\Delta}'$
on the infinite factor. The projection of a commutator subgroup is again
a commutator subgroup, thus it is torsion-free. Hence the assumption of
Lemmas \ref{L:not-tilda} and \ref{L:b-f} are satisfied. Thus we apply the same
argument as in Theorem \ref{T:raag} constructing a quasimorphism which can be
extended to $W_{\Delta}'$.
\end{proof}

Before the proof of Theorem \ref{T:pure} let us recall basic properties
and definitions of braid and pure braid groups.
Denote by $D_n$ an open two dimensional disc with $n$ marked points.
The braid group on $n$ strings, denoted $B_n$, is a group of
isotopy classes of orientation-preserving homeomorphisms
of $D_n$ which permute marked points
(this is the mapping class group of a disc with $n$ punctures).
A class of a homeomorphism which fixes all marked points is called a pure braid.
The group of all pure braids on $n$ strings, denoted
$P_n$, is a finite index normal subgroup of $B_n$.

Let $g>1$. Denote by $\mathcal{MCG}_g^n$ the mapping class group of a closed
hyperbolic surface $\Sigma_g$ with $n$ punctures. In \cite{Bir} J. Birman
showed that $B_n$ naturally embeds into $\mathcal{MCG}_g^n$. More precisely,
let $D$ be an embedded disc in $\Sigma_g$ which contains $n$ punctures. Then a
mapping class group of this $n$ punctured disc $D$ is a subgroup of
$\mathcal{MCG}_g^n$. Let us identify $B_n$ with this subgroup. In the same way
we identify $P_n$ with a subgroup of the pure mapping class group
$\mathcal{PMCG}_g^n$. Note that $\mathcal{PMCG}_g^n$ is a finite index subgroup
of $\mathcal{MCG}_g^n$.

It follows from the Nielsen-Thurston decomposition in $\mathcal{MCG}_g^n$ that
for every $\gamma \in B_n<\mathcal{MCG}_g^n$ there exists $N$, pseudo-Anosov
braids $\gamma_1, \gamma_2, \ldots, \gamma_m\in B_n$ and Dehn twists $\delta_1,
\delta_2, \ldots, \delta_n\in B_n$ such that $$\gamma^N = \gamma_1 \gamma_2
\ldots \gamma_m \delta_1^{m_1} \delta_2^{m_2} \ldots \delta_n^{m_n},$$ where
all elements in the above decomposition pairwise commute. Moreover, the support
of each element is contained in a connected component of the disc $D$, is
bounded by a simple curve and contains non empty subset of marked points.

Following \cite[Section 4]{BBF} we call an element $\gamma$ \textbf{chiral} if
it is not conjugate to its inverse. Note that if two elements in
$B_n<\mathcal{MCG}_g^n$ are conjugate in $\mathcal{MCG}_g^n$, then they are
conjugate in $B_n$. Similarly, if two elements in $P_n<\mathcal{PMCG}_g^n$ are
conjugate in $\mathcal{PMCG}_g^n$, then they are conjugate in $P_n$. It follows
that $\gamma$ is chiral in $B_n$ if and only if it is chiral in
$\mathcal{MCG}_g^n$, and the same statement holds for groups $P_n$ and
$\mathcal{PMCG}_g^n$.  The following lemma is a straightforward consequence of
Theorem 4.2 from \cite{BBF}.

\begin{lemma}[Bestvina-Bromberg-Fujiwara]\label{L:fuji-mcg} Let $\Sigma$\/ be a
closed orientable surface, possibly with punctures. Let $G$\/ be a finite index
subgroup of the mapping class group of\/ $\Sigma$. Consider a nontrivial
element\/ $\gamma \in G$ and Nielsen-Thurston decomposition of its appropriate
power as above. Assume that every element from the decomposition is chiral and
nontrivial powers of any two elements from the decomposition are not conjugate
in $G$. Then there is a homogeneous quasimorphism on $G$ which takes a non zero
value on $\gamma$.  \end{lemma}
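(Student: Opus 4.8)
The plan is to deduce the statement from Theorem 4.2 of \cite{BBF} together with two elementary features of homogeneous quasimorphisms. Write the Nielsen--Thurston decomposition of the appropriate power as
$$
\gamma^N = \gamma_1\cdots\gamma_m\,\delta_1^{m_1}\cdots\delta_n^{m_n},
$$
with all factors pairwise commuting and each supported on a connected subsurface bounded by a simple closed curve. It is convenient to list these factors as $\rho_1,\dots,\rho_r$ (the pseudo-Anosov pieces $\gamma_i$ together with the twists $\delta_j$) with exponents $e_1,\dots,e_r$, so that $\gamma^N=\prod_s \rho_s^{e_s}$, and to note that each $\rho_s$ is nontrivial and chiral by hypothesis.

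First I would record the two reductions coming from homogeneity. If $\psi\in Q(G)$ is a homogeneous quasimorphism then $\psi(\gamma)=\tfrac{1}{N}\psi(\gamma^N)$, so it suffices to arrange $\psi(\gamma^N)\neq0$. Moreover the factors $\rho_1,\dots,\rho_r$ commute and hence generate an abelian subgroup, and a homogeneous quasimorphism restricts to a genuine homomorphism on any abelian subgroup; therefore
$$
\psi(\gamma^N)=\sum_{s=1}^r e_s\,\psi(\rho_s).
$$
Thus it is enough to produce a single $\psi\in Q(G)$ which is nonzero on one chosen factor, say $\rho_1$, and which vanishes on all of the remaining factors $\rho_2,\dots,\rho_r$; for such a $\psi$ we get $\psi(\gamma^N)=e_1\psi(\rho_1)\neq0$, and the lemma follows.

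The heart of the matter is the construction of such a $\psi$, and this is exactly what Theorem 4.2 of \cite{BBF} supplies. There one embeds the action of (a finite-index subgroup of) the mapping class group into an isometric action on a quasi-tree of metric spaces assembled from curve complexes of subsurfaces; each factor $\rho_s$, being pure and supported on a proper subsurface, acts loxodromically and satisfies the WPD condition, so the Bestvina--Fujiwara counting construction attaches to it a homogeneous quasimorphism. The chirality assumption (no $\rho_s$ conjugate to its inverse) guarantees that the quasimorphism associated with $\rho_1$ is genuinely nonzero on $\rho_1$ rather than obstructed, exactly as in Lemma \ref{L:b-f}. The assumption that nontrivial powers of distinct factors are non-conjugate in $G$ translates into the statement that the axes of $\rho_1,\dots,\rho_r$ are pairwise $G$-inequivalent, which is precisely the independence needed for Theorem 4.2 to yield a quasimorphism concentrated on the $\rho_1$-axis, hence nonzero on $\rho_1$ and vanishing on every other $\rho_s$. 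Feeding the family $\rho_1,\dots,\rho_r$ into Theorem 4.2 therefore produces the desired $\psi$.

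The step I expect to be the main obstacle is verifying that the two hypotheses of the lemma match the input required by Theorem 4.2 of \cite{BBF}: one must check that each decomposition factor, whether a pseudo-Anosov piece or a Dehn twist, acts as a WPD loxodromic isometry in the BBF quasi-tree (for a Dehn twist this happens in the annular factor associated with its core curve), and---more delicately---that pairwise non-conjugacy of powers \emph{in $G$} is equivalent to inequivalence of the corresponding axes under the $G$-action, so that no cancellation can occur in the sum $\sum_s e_s\,\psi(\rho_s)$. Once this dictionary between the combinatorial hypotheses and the dynamical input of Theorem 4.2 is in place, the conclusion is immediate.
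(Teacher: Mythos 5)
Your proposal is correct in substance, but it takes a more laborious route than the paper, which in fact offers no proof at all beyond the remark that the lemma ``is a straightforward consequence of Theorem 4.2 from \cite{BBF}.'' The intended derivation is simply the contrapositive of that theorem, in the form the authors use later in Theorem \ref{T:mcg}: if no homogeneous quasimorphism detects $\gamma$, then the chiral factors of the Nielsen--Thurston decomposition can be partitioned into \emph{inessential} equivalence classes (two chiral elements being equivalent when some nontrivial powers are conjugate). Under the lemma's hypotheses every factor is chiral and no two factors have conjugate nontrivial powers, so each equivalence class is a singleton $\{\gamma_i\}$, and a singleton class can never be inessential because the defining condition $\frac{1}{m_0}=0$ has no solution; hence some homogeneous quasimorphism detects $\gamma$, with no further work. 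Your two reductions are correct and worth having --- $\psi(\gamma)=\frac{1}{N}\psi(\gamma^N)$ by homogeneity, and additivity on commuting elements giving $\psi(\gamma^N)=\sum_s e_s\psi(\rho_s)$ --- but they re-derive the easy half of \cite{BBF} rather than quote it. The one inaccuracy is your attribution: Theorem 4.2 of \cite{BBF}, as used throughout this paper, is the vanishing \emph{characterization}, not a constructive statement that outputs a quasimorphism concentrated on the axis of $\rho_1$ and vanishing on $\rho_2,\dots,\rho_r$; that separation statement is the internal machinery of the BBF proof (quasi-trees of subsurface curve complexes, WPD loxodromics, counting quasimorphisms), and invoking it as you do amounts to reproving their theorem in the special case at hand rather than applying it. Relatedly, your parenthetical that chirality means ``no $\rho_s$ conjugate to its inverse'' is weaker than what a Bestvina--Fujiwara-type construction needs (no positive power conjugate to a positive power of the inverse, cf.\ Lemma \ref{L:b-f}); closing that gap for pure Nielsen--Thurston components is again something handled inside \cite{BBF}, which is one more reason the black-box contrapositive is the cleaner route. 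What your version buys in exchange is an explanation of \emph{why} the lemma holds and a template that would transfer to other groups with comparable actions; what the paper's version buys is a two-line proof with nothing left to verify.
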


A group $G$ is said to be \textbf{biorderable} if there exists a linear order
on $G$ which is invariant under left and right translations. For example the
pure braid group on any number of strings is biorderable \cite{RZ}.

\begin{lemma}\label{L:biorder}
Let\/ $G$ be a biorderable group. Then $xy^mx^{-1} \neq y^{-n}$ for every
$y\neq e,x \in G$ and positive $m,n$.
\end{lemma}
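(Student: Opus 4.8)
The plan is to use the biorder directly, relying on the standard reformulation of a bi-invariant order on $G$ in terms of its \emph{positive cone} $P=\{g\in G: g>e\}$. This cone satisfies $P\cdot P\subseteq P$, the partition $G=P\sqcup\{e\}\sqcup P^{-1}$, and—crucially—conjugation invariance $xPx^{-1}=P$ for every $x\in G$. This last property is exactly what bi-invariance (rather than mere left-invariance) provides: from $g>e$ we get $xg>x$ by the left-invariant translation and then $xgx^{-1}>e$ by the right-invariant translation, so $P$ is stable under conjugation.

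First I would make a harmless normalization. Replacing $y$ by $y^{-1}$ turns the claimed inequality $xy^mx^{-1}\ne y^{-n}$ into $xy^{-m}x^{-1}\ne y^{n}$, which is the same statement after inverting both sides; hence I may assume $y>e$ (recall $y\neq e$, so $y$ lies in $P$ or in $P^{-1}$).

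Next, since $P$ is closed under multiplication, $y>e$ yields $y^m>e$ for every positive $m$, and conjugation invariance of $P$ then gives $xy^mx^{-1}>e$. On the other hand $y^n>e$ forces $y^{-n}<e$. Thus one side of the putative equation lies strictly above $e$ while the other lies strictly below, so they cannot coincide; this proves $xy^mx^{-1}\ne y^{-n}$.

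There is no genuine obstacle here: the entire content of the lemma is that bi-invariance makes the positive cone conjugation-invariant, after which the two sides are pinned to opposite sides of the identity. The only points requiring a line of care are the derivation of conjugation invariance from bi-invariance and the remark that the symmetry $y\mapsto y^{-1}$ eliminates any case distinction on the sign of $y$.
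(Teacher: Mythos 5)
Your proof is correct and is essentially the same as the paper's: both reduce to the case $y>e$, deduce $y^m>e$, use bi-invariance to conclude $xy^mx^{-1}>e$ while $y^{-n}<e$, and obtain a contradiction. The positive-cone language is just a repackaging of the paper's direct manipulation of the order.
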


In particular, every nontrivial element in a biorderable group is chiral.

\begin{proof}
Let $<$ be a biinvariant order on $G$. Assume on the contrary that
$xy^mx^{-1} = y^{-n}$. Without loss of generality we can assume that $y > e$.
Then $y^m > e$, we can conjugate the inequality by $x$ which gives that $y^{-n}
= xy^mx^{-1} > e$.  Thus $e > y^n$, that is $e > y$. We got a contradiction.
\end{proof}

\begin{proof} [Proof of Theorem \ref{T:pure}]
Let\/ $\gamma$ be a nontrivial pure
braid on $n$ strings. We will show that there is a homogeneous quasimorphism on
$P_n$ nontrivial on $\gamma$. After passing to a power of\/ $\gamma$ we can write
that
$$
\gamma =
\gamma_1 \gamma_2 \ldots \gamma_m \delta_1^{m_1} \delta_2^{m_2} \ldots \delta_n^{m_n}
$$
where $\gamma_i$ and\/ $\delta_i$ are as in the discussion above. Since $P_n$ is
a finite index subgroup of\/ $B_n$ we can find $M$ such that all\/ $\gamma_i^M$
and\/ $\delta_i^M$ are in $P_n$.  Thus passing to even bigger power of\/ $\gamma$
we can assume that the braids arising in the decomposition are pure.

Lemma \ref{L:biorder} implies that every element from the decomposition is
chiral, and so it is chiral in $\mathcal{PMCG}_g^n$.  Let $x$ and $y$ be two
distinct elements among $\gamma_i$ and $\delta_i^{m_i}$.  From the definition
of the decomposition, simple curves associated to $x$ and $y$ bound disjoint
subsets of marked points.  Since isotopy classes from $P_n$ preserve marked
points pointwise, powers of $x$ and $y$ cannot be conjugate by a pure braid and
hence by no element of $\mathcal{PMCG}_g^n$.

The assumptions of Lemma \ref{L:fuji-mcg} are satisfied, hence $\gamma$ is
detectable by homogeneous quasimorphism. Note that this homogeneous
quasimorphism is defined on the whole group $\mathcal{PMCG}_g^n$, and a
quasimorphism on $P_n$, which detects $\gamma$, is a restriction of the above
quasimorphism to the subgroup $P_n<\mathcal{PMCG}_g^n$.
\end{proof}

\section{The bq-dichotomy}\label{S:dichotomy}

The purpose of this section is to prove
the bq-dichotomy for various classes of groups.
We introduce a family of auxiliary groups which detects bounded elements.

\begin{lemma}\label{L:gen-kedra}
Let $\bar{m} = (m_0, m_1, \ldots, m_k)$ be a sequence of integers
such that $\frac{1}{m_0} + \frac{1}{m_1} + \ldots + \frac{1}{m_k} = 0$. Define
$$
\Gamma(\bar{m}) = \langle x_0, \ldots, x_{k}, t_1, \ldots, t_k ~|~
({}^{t_i}\!x_0)^{m_0} = x_i^{m_i}, [x_j,x_k] = e \rangle
$$
Then $g = x_0x_1\ldots x_k$ generates a bounded cyclic subgroup.
\end{lemma}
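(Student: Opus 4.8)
The plan is to reduce everything to Lemma \ref{L:bounded}. Write $A := \langle x_0,\dots,x_k\rangle$ for the abelian subgroup generated by the commuting generators, and set $a := x_0^{m_0}$. The defining relations give, for each $i\ge 1$,
$$
x_i^{m_i} = \left({}^{t_i}\!x_0\right)^{m_0} = {}^{t_i}\!a .
$$
Since $a=x_0^{m_0}$ and ${}^{t_i}\!a=x_i^{m_i}$ both lie in the abelian group $A$, they commute, i.e. $\left[a,{}^{t_i}\!a\right]=e$. Thus for each fixed $i$ the pair $(a,t_i)$ satisfies the defining relation of the group $\Gamma$ of Lemma \ref{L:bounded}, so that lemma applies and gives $[a,t_i]^p=[a^p,t_i]$ for every $p\in\B Z$. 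In particular $\|[a,t_i]^p\|\le 2$, since $[a^p,t_i]={}^{a^p}\!t_i\cdot t_i^{-1}$ is a product of a conjugate of the generator $t_i$ and of $t_i^{-1}$ (and $\|h^{-1}\|=\|h\|$ covers $p<0$).

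Next I would exploit the arithmetic hypothesis. Fix a common multiple $M$ of $m_0,\dots,m_k$ and put $e_i:=M/m_i$, so that $\sum_{i=0}^k e_i=M\sum_i\frac{1}{m_i}=0$. Because the $x_i$ commute, $g^M=\prod_{i=0}^k x_i^M$, and using $x_i^M=(x_i^{m_i})^{e_i}={}^{t_i}(a^{e_i})$ for $i\ge 1$ together with $x_0^M=a^{e_0}$ yields $g^M=a^{e_0}\prod_{i=1}^k{}^{t_i}(a^{e_i})$. Now write each conjugate as ${}^{t_i}(a^{e_i})=[t_i,a^{e_i}]\,a^{e_i}$, where $[t_i,a^{e_i}]=x_i^{m_ie_i}x_0^{-m_0e_i}\in A$. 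Since all the resulting factors lie in the abelian group $A$, I may reorder and collect the powers of $a$: the total exponent is $a^{\sum_{i=0}^k e_i}=a^0=e$ — this is exactly where $\sum 1/m_i=0$ enters — so
$$
g^M=\prod_{i=1}^k[t_i,a^{e_i}]=\prod_{i=1}^k[a,t_i]^{-e_i},
$$
the last equality again by Lemma \ref{L:bounded}. By the previous paragraph each factor has norm at most $2$, whence $\|g^M\|\le 2k$, independently of $M$.

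Finally I would pass to arbitrary powers. Every integer multiple $qM$ is again a common multiple of the $m_i$, so the displayed bound (applied with $qM$ in place of $M$) gives $\|g^{qM}\|\le 2k$ for all $q\in\B Z$. For an arbitrary $n$ write $n=qM+r$ with $0\le r<M$; then
$$
\|g^n\|\le\|g^{qM}\|+\|g^r\|\le 2k+C,
$$
where $C:=\max_{0\le r<M}\|g^r\|$ is a finite constant. Hence $\langle g\rangle$ is bounded.

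The only genuinely delicate point is the collection step in the second paragraph: one must verify that after extracting the commutators $[t_i,a^{e_i}]$ the residual power of $a$ is precisely $a^{\sum_i e_i}$, which vanishes exactly because of the hypothesis $\sum 1/m_i=0$. Everything else is a direct application of Lemma \ref{L:bounded} combined with the commutativity of the $x_i$, and I would keep those verifications brief.
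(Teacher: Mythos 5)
Your proof is correct and is essentially the paper's own argument: both rewrite $x_i^M$ as a conjugate by $t_i$ of a power of $x_0$, collect everything in the abelian subgroup so that the hypothesis $\sum 1/m_i=0$ cancels the residual power of $x_0$, and exhibit $g^M$ (for every common multiple $M$) as a product of $k$ commutators $[t_i,\cdot\,]$, each of norm at most $2$. The only differences are cosmetic — you route the computation through $a=x_0^{m_0}$ and Lemma \ref{L:bounded} where the paper computes directly with $N=m_0\cdots m_k$ — plus your explicit (and welcome) final step handling exponents not divisible by $M$, which the paper leaves implicit.
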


\begin{proof}
Let $N = m_0m_1\ldots m_k$ and $a_i = \frac{N}{m_i}$.
From the assumption on $m_i$ we have that $a_0+a_1+\ldots+a_k = 0$.
For any $n$ we obtain that
\begin{align*}
g^{nN} & =  x_0^{nN}x_1^{nN}\ldots x_k^{nN}\\
& = x_0^{nm_0a_0}x_1^{nm_1a_1}\ldots x_k^{nm_ka_k}\\
& = x_0^{nm_0a_0} ({}^{t_1}\!x_0)^{nm_0a_1}\ldots ({}^{t_k}\!x_0)^{nm_0a_k}\\
& = x_0^{nm_0(-a_1-a_2-\ldots -a_k)} ({}^{t_1}\!x_0)^{nm_0a_1}\ldots ({}^{t_k}\!x_0)^{nm_0a_k}\\
& = ({}^{t_1}\!x_0)^{nm_0a_1} x_0^{-nm_0a_1}\ldots ({}^{t_k}\!x_0)^{nm_0a_k} x_0^{-nm_0a_k}\\
& = [t_1,x_0^{nm_0a_1}]\ldots [t_k,x_0^{nm_0a_k}].
\end{align*}
It shows that $g^{nN}$ is bounded by $2k$ for every $n$. Hence $g$ generates a bounded subgroup.
\end{proof}

Let us remark that $\Gamma(1,-1)$ is isomorphic to the group $\Gamma$
defined in Section \ref{L:bounded}. We start with a somewhat
weaker statement for Coxeter groups.

\begin{theorem}\label{T:coxeter}
Let $W$ be a Coxeter group and let $g\in W$.
\begin{itemize}
\item
The cyclic subgroup $\langle g \rangle $ is either
bounded or undistorted.
\item
Let $W_T=W_{T_1}\times W_{T_2}$ be a standard parabolic subgroup
such that both $W_{T_1}$ and $W_{T_2}$ are infinite and
standard parabolic.
If the standard projection $W \to W_T$ is well defined
for all $W_T$ of the above form
then $W$ satisfies the bq-dichotomy.
\end{itemize}
\end{theorem}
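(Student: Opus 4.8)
The plan is to dichotomize according to whether some positive power of $g$ is conjugate to its inverse, after first moving $g$ into its parabolic closure. By Corollary \ref{C:isometric} every standard parabolic $W_T$ is isometrically embedded in $W$ for the biinvariant word metrics, and the norm is conjugation invariant, so replacing $g$ by a conjugate alters neither its distortion nor its chirality. Hence I may assume that $g$ lies in, and is essential in (not contained in any proper standard parabolic of), its parabolic closure $W_T$, the smallest standard parabolic that a conjugate of $g$ meets. If now some positive power $g^k$ is conjugate to $g^{-k}$, say $hg^kh^{-1}=g^{-k}$, then $hg^{kn}h^{-1}=g^{-kn}$ and
\[
g^{2kn}=g^{kn}\cdot h^{-1}g^{-kn}h=\left(g^{kn}h^{-1}g^{-kn}\right)h,
\]
whose first factor is conjugate to $h^{-1}$ and whose second factor is $h$; thus $\|g^{2kn}\|\le 2\|h\|$ for all $n$, and since the residues contribute a bounded amount $\langle g\rangle$ is bounded. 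This is exactly the mechanism behind Lemma \ref{L:bounded}, and it disposes of the achiral case in every Coxeter group.

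In the remaining (chiral) case no positive power of $g$ is conjugate to its inverse; I must produce a homogeneous quasimorphism detecting $g$. First $g$ has infinite order, since a torsion element is conjugate into a finite parabolic and finite Coxeter groups are real (every element is conjugate to its inverse). Decompose $W_T=\prod_i W_{T_i}$ into irreducible factors and write $g=\prod_i g_i$; essentiality forces each $g_i$ nontrivial and essential in $W_{T_i}$. In particular $g$ is not conjugate to $g^{-1}$, so some component $g_{i_0}$ is not conjugate to $g_{i_0}^{-1}$ in $W_{T_{i_0}}$, and this factor is infinite by reality of finite Coxeter groups. On this factor I build the quasimorphism: when $W_{T_{i_0}}$ is non-affine the essential element $g_{i_0}$ acts as a rank-one isometry of the Davis complex, for which non-conjugacy to the inverse together with Lemma \ref{L:not-tilda} rules out $g_{i_0}^n\sim g_{i_0}^{-m}$ for all $n,m>0$, so Lemma \ref{L:b-f} supplies a homogeneous quasimorphism nonzero on $g_{i_0}$; when $W_{T_{i_0}}$ is affine it is virtually abelian, so an infinite-order element carries a nonzero homomorphism to $\B R$ on a finite-index subgroup, which induces the required quasimorphism.

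For the first (weaker) bullet this already finishes the argument: pulling the quasimorphism back along the direct-factor projection $W_T\to W_{T_{i_0}}$ yields a homogeneous quasimorphism on $W_T$ nonzero on $g$, whence $\langle g\rangle$ is undistorted in $W_T$, and since $W_T\hookrightarrow W$ is isometric (Corollary \ref{C:isometric}) it is undistorted in $W$. Combined with the bounded case above, every Coxeter group satisfies the bounded-or-undistorted dichotomy.

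For the second (stronger) bullet I must realize the quasimorphism on all of $W$, and this is the sole genuine obstacle, precisely the difficulty flagged in the Remark: there is no evident way to extend a quasimorphism off a parabolic subgroup of a Coxeter group. I split into two situations. If the essential parabolic $W_T$ is irreducible and non-affine, then $g$ is already rank-one in the Davis complex of $W$ itself, so Lemma \ref{L:b-f} applied directly to $W$ produces the quasimorphism with no extension required. Otherwise $g$ is not rank-one in $W$, which forces $W_T$ to be reducible; grouping the irreducible factors I may write $W_T=W_{T_1}\times W_{T_2}$ with both $W_{T_i}$ infinite, and here the standing hypothesis furnishes the standard projection $\pi\colon W\to W_T$, so composing the quasimorphism on $W_T$ with $\pi$ extends it to $W$ while preserving its value on $g$. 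The hard part is exactly this extension step, and its failure in general is what restricts the unconditional conclusion to the weaker dichotomy; the irreducible affine case, where $g$ is neither rank-one nor covered by a product projection, is the subtlest point to reconcile with the projection hypothesis and deserves separate care.
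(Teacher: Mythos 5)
Your overall architecture (reduce to the parabolic closure via Corollary \ref{C:isometric}, split on whether a positive power of $g$ is conjugate to its inverse, use the $\Gamma(m,-m)$-type identity for the bounded side and Lemmas \ref{L:not-tilda} and \ref{L:b-f} for the rank-one side) is close in spirit to the paper's induction on the number of Coxeter generators. But there is a genuine gap in your treatment of the affine case, and it breaks the proof of \emph{both} bullets. You claim that if $W_{T_{i_0}}$ is affine then an infinite-order element ``carries a nonzero homomorphism to $\B R$ on a finite-index subgroup, which induces the required quasimorphism.'' This is false. An irreducible affine Coxeter group is amenable, so every homogeneous quasimorphism on it is a genuine homomorphism to $\B R$; since the group is generated by involutions, that homomorphism is zero. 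Equivalently, extending a functional from the translation lattice requires averaging over the point group (the finite Weyl group), which has no nonzero invariant functionals on the reflection representation, so the induced quasimorphism vanishes identically. Concretely, a generic translation in $\widetilde{A}_2$ has no positive power conjugate to any negative power (the Weyl group of $A_2$ does not contain $-1$), so it lands in your ``chiral'' branch, yet it is detected by no homogeneous quasimorphism. Your dichotomy ``no power conjugate to an inverse power $\Rightarrow$ detected'' is therefore false in Coxeter groups.

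The correct resolution, and the one the paper uses, is that such elements sit on the \emph{bounded} side for a different reason: irreducible affine Coxeter groups of rank at least $3$ are bounded in the biinvariant word metric (the paper cites \cite{MR2842917} for this; rank $2$, the infinite dihedral group, is elementary since $t^{2n}=[t^n,s]$). This is exactly the Caprace--Fujiwara Case 2(1) of the paper's proof (finite $\times$ affine of rank $\geq 3$), which your argument never reaches because your chirality test misclassifies these elements. You do flag the irreducible affine case as ``deserving separate care'' for the second bullet, but your proposed mechanism for it is wrong rather than merely incomplete, and the same wrong mechanism is used in your proof of the first bullet. To repair the proof you need to import the boundedness of affine factors as an input and route all elements whose parabolic closure has an irreducible affine factor through the bounded alternative, as the paper does.
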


\begin{remark}
Let $S$ be the standard generating set for $W$.
The property in the second item of the theorem holds if
for every $s\in S\setminus T$ and $t\in T$ the exponent
in the relation $(st)^m$ is even.
\end{remark}

\begin{proof}
We proceed by induction on a number of Coxeter generators. If there is only
one generator the theorem is obvious. Let $n\in \B N$
be a natural number and $W$ be a
Coxeter group generated by $n$ Coxeter generators. Assume that the theorem is true
for Coxeter groups generated by less than $n$ Coxeter generators.
Let $g\in W$ be a nontorsion element. There are two cases:

\textit{Case 1}:
The element $g$  acts as a rank one isometry on the
Davis complex. If no positive power of $g$ is conjugate to a positive power of
$g^{-1}$ then we can apply Lemma \ref{L:b-f} to obtain a homogeneous
quasimorphism nonvanishing on $g$.
Otherwise we have that $xg^mx^{-1} = g^{-n}$ for
some $x \in W$ and positive $m,n\in \B N$.
By Lemma \ref{L:not-tilda} it follows that
$m=n$.  There is a homomorphism
$$\Psi \colon \Gamma(m,-m) \to W$$
defined on generators as
$\Psi(x_0) = \Psi(x_1) = g$, $\Psi(t_1) = x$. Thus the element
$\Psi(x_0x_1) = g^2$ (as well as  $g$)
generates a bounded cyclic subgroup.

\textit{Case 2}:
$g$ does not act as a rank one isometry on the Davis complex.
Then, according to Caprace and Fujiwara \cite[Proposition 4.5]{MR2585575},
$g$ is contained in a parabolic subgroup $P$ that is either
\begin{enumerate}
\item equal to $P_1\times P_2$, where $P_1$ is finite parabolic and
$P_2$ is parabolic and affine of rank at least three or
\item equal to  $P_1\times P_2$, where both $P_1$ and $P_2$ are infinite
parabolic.
\end{enumerate}

In the first case, both $P_1$ and $P_2$ are bounded
\cite{MR2842917} and so is their product and hence $g$
generates a bounded subgroup.

In the second case we project $g$ to the factors and the first
statement follows by induction because the inclusion of a
parabolic subgroup is an isometry due to Corollary \ref{C:isometric}.

If the projection of $g$ to one of the factors is detectable
by a homogeneous quasimorphism then this quasimorphisms extends to the                               product $P$. Thus if W satisfies the assumption of the second statement,
we pull back the latter to W using the projection $W \to W_T$ and
the conjugation $xW_Tx^{-1} = P$. Otherwise $g$ generates a bounded
subgroup in both $P_1$ and $P_2$. Indeed, we proceed by induction since
the assumption of the second statement is inherited by parabolic subgroups.
Thus $g$ also generates a bounded subgroup in $P_1\times P_2$
and hence in $W$.
\end{proof}

\begin{theorem}\label{T:mcg}
The bq-dichotomy holds for a finite index subgroup
of the mapping class group of a closed surface possibly
with punctures.
\end{theorem}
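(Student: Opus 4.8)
The plan is to follow the strategy of the proof of Theorem \ref{T:pure}, using Lemma \ref{L:fuji-mcg} to produce quasimorphisms, while accounting for the two phenomena that do not occur in a pure braid group and that are the source of bounded cyclic subgroups: a piece of the Nielsen--Thurston decomposition that is conjugate to its own inverse, and two distinct pieces whose powers are conjugate in $G$. Let $G$ be a finite index subgroup of the mapping class group of a closed orientable surface $\Sigma$, possibly with punctures, and let $g\in G$. If $g$ has finite order then $\langle g\rangle$ is finite, hence bounded, so I may assume that $g$ is nontorsion. Passing to a suitable power I write a Nielsen--Thurston decomposition $g^N=\sigma_1\cdots\sigma_r$ into pairwise commuting pieces supported on disjoint subsurfaces --- pseudo-Anosov components and powers of Dehn twists --- and, since $G$ has finite index, I raise to a further power so that every $\sigma_i$ lies in $G$. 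Lemma \ref{L:fuji-mcg} applies verbatim to $G$.

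First I would organize the pieces. Conjugation preserves the dilatation of a pseudo-Anosov piece and the magnitude of a twisting piece, so the relation ``some nontrivial powers of $\sigma_i$ and $\sigma_j$ are conjugate in $G$'' forces the exponents to match in absolute value; this is the mapping class group analogue of Lemma \ref{L:not-tilda} and makes the partition of $\{\sigma_i\}$ into conjugacy classes well behaved. For every homogeneous quasimorphism $q$ and every class $C$ with representative $\rho_C$ one then has $q(\sigma_i)=\pm q(\rho_C)$ for $i\in C$, so that $q(g^N)=\sum_C n_C\,q(\rho_C)$, where $n_C$ is the net signed exponent of the class. A class is \emph{achiral} when $\rho_C$ is conjugate to $\rho_C^{-1}$, in which case $q(\rho_C)=0$ for every $q$. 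For a chiral class, Lemma \ref{L:fuji-mcg} applied to $\rho_C$, together with the Bestvina--Bromberg--Fujiwara construction which makes the resulting quasimorphism vanish on pieces not conjugate to $\rho_C$, yields a homogeneous quasimorphism $q_C$ with $q_C(\rho_C)\neq 0$ and $q_C(g^N)=n_C\,q_C(\rho_C)$. Hence if some chiral class has $n_C\neq 0$ then $q_C$ detects $g$ and we are in the first horn of the dichotomy. In particular any class consisting of a single chiral piece detects $g$, exactly as in the pure braid case.

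It remains to treat the case in which every chiral class has vanishing net exponent and the remaining classes are achiral; here I claim that $g$ generates a bounded cyclic subgroup. For a chiral class $C$ with $n_C=0$ the class-part $\prod_{i\in C}\sigma_i^{e_i}$ is bounded: I would build a homomorphism from an auxiliary group $\Gamma(\bar m)$ of Lemma \ref{L:gen-kedra} into $G$ sending the distinguished generator $x_0\cdots x_k$ to a power of this class-part, where the conjugating elements of the class furnish the images of the $t_i$ realizing the relations $({}^{t_i}\!x_0)^{m_0}=x_i^{m_i}$, and the condition $\sum 1/m_i=0$ encodes precisely $n_C=0$. For an achiral class each piece is conjugate to an element that is bounded by the $\Gamma(m,-m)$ argument used in Case~1 of the proof of Theorem \ref{T:coxeter}, so each such piece, and hence the whole class-part, is bounded. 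Since all pieces commute, the class-parts are pairwise commuting bounded elements whose product is $g^N$; boundedness of a cyclic subgroup is inherited by commuting products, so $\langle g^N\rangle$, and therefore $\langle g\rangle$, is bounded.

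The main obstacle is the boundedness step: setting up the homomorphisms $\Gamma(\bar m)\to G$ requires the rigidity statement that conjugacies between pseudo-Anosov and Dehn-twist pieces respect the exponents, together with a careful choice of the integers $m_i$ so that the defining relations and the balancing condition $\sum 1/m_i=0$ are simultaneously realized by the actual exponents $e_i$ and conjugating elements occurring in the decomposition of $g^N$. The second delicate point is the separation property used in the second paragraph, namely that the Bestvina--Bromberg--Fujiwara quasimorphism attached to a chiral class can be chosen to vanish on all other classes and on the achiral pieces; this is what guarantees that $q_C(g^N)=n_C\,q_C(\rho_C)$ rather than some uncontrolled combination, and it is the input that turns ``some chiral class survives'' into an honest detection of $g$.
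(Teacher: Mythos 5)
Your architecture matches the paper's: pass to a power so that the Nielsen--Thurston pieces lie in $G$ and pairwise commute, show that a piece conjugate to its inverse is bounded via $\Gamma(1,-1)$, show that a ``balanced'' conjugacy class of chiral pieces is bounded via a homomorphism $\Gamma(\bar m)\to G$ and Lemma \ref{L:gen-kedra}, and conclude by the commuting-product argument; the detection horn comes from the Bestvina--Bromberg--Fujiwara machinery. The paper, however, short-circuits both of the ``delicate points'' you flag by citing \cite[Theorem 4.2]{BBF} in full: that theorem says directly that if no homogeneous quasimorphism detects $\gamma$, then every piece of the decomposition is either achiral or belongs to an \emph{inessential} class, meaning there exist integers $m_i$ with the $\gamma_i^{m_i}$ pairwise conjugate and $\sum 1/m_i=0$. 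Those $m_i$ are then fed verbatim into Lemma \ref{L:gen-kedra}. So neither the separation property of the BBF quasimorphisms nor any rigidity of conjugacies between pieces needs to be established; they are packaged inside the cited theorem.

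One step of your write-up is actually false as stated: conjugation preserving dilatation does \emph{not} force the exponents in a conjugacy $\phi\sigma_i^a\phi^{-1}=\sigma_j^b$ to match in absolute value (it only forces $\lambda_i^{|a|}=\lambda_j^{|b|}$, and the pieces need not have equal dilatation --- e.g.\ $\sigma_j$ could be conjugate to $\sigma_i^2$). Consequently your identity $q(\sigma_i)=\pm q(\rho_C)$ and the integer ``net signed exponent'' $n_C$ are not the right bookkeeping; the correct vanishing condition is $\sum_i 1/m_i=0$ in BBF's sense, i.e.\ $q(\sigma_i)=(1/m_i)\,q(\rho_C^{m_0})$ up to normalization. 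This is repairable --- replacing $n_C=0$ by $\sum 1/m_i=0$ throughout recovers exactly the paper's argument --- but as written the dichotomy ``some chiral class has $n_C\neq 0$ versus all have $n_C=0$'' does not partition the cases correctly. The separation property you invoke in the second paragraph is likewise not derivable from Lemma \ref{L:fuji-mcg} alone (whose hypotheses require \emph{all} pieces to be chiral and pairwise non-conjugate); you need the stronger statement of \cite[Theorem 4.2]{BBF}, which is what the paper actually uses.
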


\begin{proof}
Let us recall some notions from \cite[Section 4]{BBF}.  We say that two chiral
elements of a group $G$ are equivalent if some of theirs nontrivial powers are
conjugate.  An equivalence class $\{\gamma_0,\gamma_1, \ldots, \gamma_n\}$ of this relation
is called \textbf{inessential}, if there is a sequence of numbers
$\bar{m} =(m_0,\ldots,m_n)$ such that elements $\gamma_i^{m_i}$ are pairwise conjugate and
$\Sigma \frac{1}{m_i} = 0$.  Let $h = \gamma_0\ldots \gamma_n$, where all
$\gamma_i$'s commute. Note that there is a homomorphism
$$\Psi \colon \Gamma(\bar{m}) \to G$$
defined on the generators as $\Psi(x_i) = \gamma_i$. From the Lemma
\ref{L:gen-kedra} it follows that $\Psi(x_0\ldots x_n) = h$ generates a bounded
subgroup. When $\gamma$ is not chiral,
it generates a bounded subgroup due to a homomorphism from $\Gamma(1,-1)$
defined by $\Psi(x_0)=\Psi(x_1)=\gamma$.

Let $\gamma \in G$. By the same argument as in the proof of Theorem \ref{T:pure} we
can assume that $\gamma$ has a Nielsen-Thurston decomposition within $G$ (that is,
elements of the decomposition are in $G$). Assume that there is no homogeneous
quasimorphism which takes non zero value on $\gamma$.
Then by \cite[Theorem 4.2]{BBF} in the decomposition of $\gamma$ we have either
not chiral elements, or
chiral elements which can be divided into inessential equivalence classes.
Hence we can write that
$$\gamma = c_1\ldots c_m h_1 \ldots h_n,$$
where $c_i$ are not chiral and $h_i$ are products of elements from inessential class. In both cases
they generate bounded subgroups. Since
$c's$ and $h's$ commute, we have that
$$\gamma^k = c_1^k\ldots c_n^k h_1^k \ldots h_n^k.$$
Thus $\gamma$ generates a bounded subgroup in $G$.
\end{proof}

\begin{theorem}\label{T:braid}
The bq-dichotomy holds for Artin braid groups.
\end{theorem}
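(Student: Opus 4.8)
The plan is to run the argument of Theorem~\ref{T:mcg} for $B_n$, using Birman's embedding $B_n\hookrightarrow \mathcal{MCG}_g^n$ (for a hyperbolic surface $\Sigma_g$ with $n$ punctures) together with the fact, recorded before Theorem~\ref{T:pure}, that two braids which are conjugate in $\mathcal{MCG}_g^n$ are already conjugate in $B_n$, and likewise for $P_n$ inside $\mathcal{PMCG}_g^n$. The obstacle to simply quoting Theorem~\ref{T:mcg} is that $B_n$ sits in $\mathcal{MCG}_g^n$ as the mapping class group of an embedded $n$-punctured disc $D$ and is \emph{not} of finite index; consequently both the detecting quasimorphisms and the boundedness witnesses must be shown to live on $B_n$ itself rather than merely on the ambient group.

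First I would reduce to a convenient power of $\gamma$. Given a nontrivial $\gamma\in B_n$, exactly as in the proof of Theorem~\ref{T:pure}, I pass to a power whose Nielsen--Thurston decomposition
$$
\gamma = \gamma_1\cdots\gamma_m\,\delta_1^{m_1}\cdots\delta_n^{m_n}
$$
consists of pairwise commuting braids (pseudo-Anosov pieces and Dehn twists) whose supports lie in disjoint subsurfaces of $D$, so that each factor is a genuine element of $B_n$.

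Next comes the dichotomy. Suppose no homogeneous quasimorphism on $B_n$ is nonzero on $\gamma$. Applying Theorem~4.2 of \cite{BBF} to this decomposition inside $\mathcal{MCG}_g^n$, I conclude that the pieces are either not chiral or can be divided into inessential equivalence classes; for otherwise Lemma~\ref{L:fuji-mcg} would produce a homogeneous quasimorphism on $\mathcal{MCG}_g^n$ whose restriction to $B_n$ detects $\gamma$. Here chirality of a braid in $B_n$ coincides with chirality in $\mathcal{MCG}_g^n$ by the remark preceding Theorem~\ref{T:pure}. Thus I may write
$$
\gamma = c_1\cdots c_m\, h_1\cdots h_\ell,
$$
where each $c_i$ is not chiral, each $h_j$ is a product of members of a single inessential class, and all factors commute.

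Finally, boundedness. For each non-chiral $c_i$ there is $x\in\mathcal{MCG}_g^n$ with $x c_i x^{-1}=c_i^{-1}$; since this conjugacy of braids is realized inside $B_n$, the element $x$ may be chosen in $B_n$, and the homomorphism $\Gamma(1,-1)\to B_n$ sending $x_0,x_1\mapsto c_i$ and $t_1\mapsto x$ shows, via Lemma~\ref{L:gen-kedra}, that $c_i$ generates a bounded cyclic subgroup of $B_n$. For each inessential class the witnessing pairwise conjugacies of the $\gamma_i^{m_i}$ are again realized in $B_n$, giving a homomorphism $\Gamma(\bar m)\to B_n$ and hence boundedness of $h_j$ by Lemma~\ref{L:gen-kedra}. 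Because the $c_i$ and $h_j$ commute, $\gamma^k=c_1^k\cdots c_m^k\,h_1^k\cdots h_\ell^k$ is a product of boundedly many bounded factors, so $\gamma$ generates a bounded cyclic subgroup of $B_n$. The main point to verify carefully is precisely that every conjugacy and every auxiliary homomorphism can be taken inside $B_n$ and not only in $\mathcal{MCG}_g^n$; this is exactly what the realizability of braid conjugacy in $B_n$ provides.
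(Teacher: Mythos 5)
Your proposal is correct and follows essentially the same route as the paper: embed $B_n$ in $\mathcal{MCG}_g^n$ via Birman, use that conjugacy (hence chirality and the essential/inessential distinction) for braids is the same in $B_n$ as in $\mathcal{MCG}_g^n$, apply \cite[Theorem 4.2]{BBF} to split into the quasimorphism case (restrict to $B_n$) and the non-chiral/inessential case, and in the latter realize the witnessing conjugacies inside $B_n$ so that Lemma \ref{L:gen-kedra} gives boundedness of the commuting factors. Your explicit emphasis on why the auxiliary homomorphisms $\Gamma(\bar m)\to B_n$ land in $B_n$ is exactly the point the paper relies on.
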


\begin{proof}
Let
$\gamma \in B_n<\mathcal{MCG}_g^n$, where $g>1$. Recall that two braids in $B_n$ are
conjugate in $B_n$ if and only if they are conjugate in $\mathcal{MCG}_g^n$.
Hence an equivalence class $\{\gamma_0,\gamma_1, \ldots, \gamma_n\}$, where
each $\gamma_i\in B_n$, is essential (respectively inessential) in $B_n$ if and
only if it is essential (respectively inessential) in $\mathcal{MCG}_g^n$.
Similarly if $\gamma$ is not chiral in $B_n$, then it is not chiral in
$\mathcal{MCG}_g^n$.

Assume that there is no homogeneous quasimorphism on $\mathcal{MCG}_g^n$ which
takes non zero value on $\gamma$.  Then by \cite[Theorem 4.2]{BBF} in the
Nielsen-Thurston decomposition of $\gamma$ in $\mathcal{MCG}_g^n$  we have
either
not chiral elements, or chiral elements which can be divided into inessential
equivalence classes. Since each element in the Nielsen-Thurston decomposition
of $\gamma$ lies in $B_n$, and the notion of equivalence class and chirality is
the same in $B_n<\mathcal{MCG}_g^n$ and in $\mathcal{MCG}_g^n$, it follows that
there is no homogeneous quasimorphism on $B_n$ which takes non zero value on
$\gamma$. Hence we can write that
$$
\gamma = c_1\ldots c_m h_1 \ldots h_n,
$$
where $c_i$'s are not
chiral in $B_n$ and $h_i$'s are products of elements from inessential class in
$B_n$. In both cases they generate bounded subgroups (see the discussion in the
proof of the previous case). Since $c's$ and $h's$ commute, we have that
$$\gamma^k = c_1^k\ldots c_n^k h_1^k \ldots h_n^k.$$ Thus $\gamma$ generates a
bounded subgroup in $B_n$.
\end{proof}

\begin{theorem}\label{T:sphere-braid}
The bq-dichotomy holds for spherical braid groups (both
pure and full).
\end{theorem}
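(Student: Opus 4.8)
The plan is to reduce to the mapping class group case handled in Theorem \ref{T:mcg} by passing through a central extension with finite kernel. Write $B_n(S^2)$ and $P_n(S^2)$ for the full and pure spherical braid groups, and let $\mathcal{MCG}_0^n$ and $\mathcal{PMCG}_0^n$ denote the full and pure mapping class groups of the $n$-punctured sphere. Since $\OP{Diff}^+(S^2)$ deformation retracts onto $\OP{SO}(3)$, one has $\pi_1(\OP{Diff}^+(S^2))=\pi_1(\OP{SO}(3))=\B Z/2\B Z$ and $\pi_0(\OP{Diff}^+(S^2))=1$. Feeding this into the long exact sequence of the evaluation fibration $\OP{Diff}^+(S^2)\to\OP{UConf}_n(S^2)$ gives, for $n\geq 3$, a central extension
\[
1\to \B Z/2\B Z\to B_n(S^2)\xrightarrow{\pi}\mathcal{MCG}_0^n\to 1,
\]
in which the kernel is the center of $B_n(S^2)$ generated by the full twist. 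Restricting to pure classes yields the analogous central extension of $\mathcal{PMCG}_0^n$ by the same $\B Z/2\B Z$ for $P_n(S^2)$.

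Next I would establish a transfer principle: if $1\to C\to \widetilde G\to G\to 1$ is a central extension with $C$ finite and $\widetilde G$ normally finitely generated, then $\widetilde G$ satisfies the bq-dichotomy whenever $G$ does. Lifting a shortest conjugation-invariant factorization of $\pi(g)$ and correcting by an element of $C$ produces the estimate
\[
\|g\|_{\widetilde G}\leq \|\pi(g)\|_G+K,\qquad K:=\max_{z\in C}\|z\|_{\widetilde G}<\infty,
\]
the finiteness of $K$ being immediate from $|C|<\infty$. Hence boundedness of $\langle\pi(g)\rangle$ in $G$ forces boundedness of $\langle g\rangle$ in $\widetilde G$. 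On the other hand, any homogeneous quasimorphism $q$ on $G$ with $q(\pi(g))\neq 0$ pulls back to the homogeneous quasimorphism $q\circ\pi$ on $\widetilde G$, which is nonzero on $g$. Applying the bq-dichotomy of $G$ to $\pi(g)$ then yields the dichotomy for an arbitrary $g\in\widetilde G$.

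It remains to assemble the pieces. For $n\leq 3$ the groups $\mathcal{MCG}_0^n$ and $\mathcal{PMCG}_0^n$ are finite, so $B_n(S^2)$ and $P_n(S^2)$ are finite, hence bounded, and the dichotomy holds trivially. For $n\geq 4$ the $n$-punctured sphere is a closed orientable surface with punctures, so $\mathcal{MCG}_0^n$ and its finite index subgroup $\mathcal{PMCG}_0^n$ both satisfy the bq-dichotomy by Theorem \ref{T:mcg}; applying the transfer principle to the two central extensions above gives the dichotomy for $B_n(S^2)$ and $P_n(S^2)$. The one point needing care is the identification of the kernel as a finite central subgroup, i.e. the topological input $\pi_1(\OP{SO}(3))=\B Z/2\B Z$, together with checking that Theorem \ref{T:mcg} is genuinely available for the genus-zero case; this is fine, since Lemma \ref{L:fuji-mcg} is stated for closed orientable surfaces possibly with punctures. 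The transfer estimate itself, by contrast, is routine.
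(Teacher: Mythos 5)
Your proof is correct and follows essentially the same route as the paper: both reduce $B_n(S^2)$ (and $P_n(S^2)$) to the mapping class group of the punctured sphere via the central extension with kernel $\langle\Delta^2\rangle\cong\B Z/2\B Z$, transferring quasimorphisms by pullback and boundedness using centrality and finiteness of the kernel, and then rely on the genus-zero case of the mapping class group result (Theorem \ref{T:mcg}). The only cosmetic differences are that you abstract this into a general transfer principle for finite central extensions, whereas the paper re-runs the Bestvina--Bromberg--Fujiwara decomposition argument for $\Pi(\gamma)$ and treats the pure case via the direct-product decomposition $P_n(S^2)\cong\B Z/2\B Z\times\mathcal{PMCG}_0^n$, which is a split instance of your extension.
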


\begin{proof}
{\bf The case of spherical pure braid groups $P_n(S^2)$.} Recall that
$P_n(S^2)$ is a fundamental group of an ordered configuration space of $n$
different points in a two-sphere $S^2$. As before we denote by

$\mathcal{MCG}_0^n$ the mapping class group of the $n$ punctured sphere, and by
$\mathcal{PMCG}_0^n$ the pure mapping class group of the $n$ punctured sphere.
Since $P_n(S^2)$ are trivial for $n=1,2$, we assume that $n>2$. It is well
known fact that $P_n(S^2)$ is isomorphic to a direct product of $\B Z/\B Z_2$
and $\mathcal{PMCG}_0^n$, see e.g.  \cite{KV}. Since we already proved the
statement for finite index subgroups of mapping class groups, the proof of this
case follows.

{\bf The case of spherical braid groups $B_n(S^2)$.} The group $B_n(S^2)$ is a
fundamental group of a configuration space of $n$ different points in a
two-sphere $S^2$. It is known that the group $\mathcal{MCG}_0^n$ is isomorphic
to $B_n(S^2)/\langle\Delta^2\rangle$, where $\Delta$ is the Garside fundamental
braid, see \cite{M}. In particular, $\Delta^2$ lies in the center of $B_n(S^2)$
and $\Delta^4=1_{B_n(S^2)}$. Let $$\Pi\colon B_n(S^2)\to
B_n(S^2)/\langle\Delta^2\rangle\cong\mathcal{MCG}_0^n$$ be the projection
homomorphism. Since $\Delta^4=1_{B_n(S^2)}$ and $\Delta^2$ is central, every
homogeneous quasimorphism on $\mathcal{MCG}_0^n$ defines a homogeneous
quasimorphism on $B_n(S^2)$ and vice versa. In addition, if two elements
$\Pi(x),\Pi(y)\in \mathcal{MCG}_0^n$ commute or are conjugate in
$\mathcal{MCG}_0^n$, then $x$ and $y$ commute or are conjugate up to the
multiplication by a torsion element $\Delta^2$ in $B_n(S^2)$.

Let $\gamma\in B_n(S^2)$. Assume that there is no homogeneous quasimorphism on
$B_n(S^2)$ which takes non zero value on $\gamma$. Then there is no homogeneous
quasimorphism on $\mathcal{MCG}_0^n$ which takes non zero value on
$\Pi(\gamma)$. Then by \cite[Theorem 4.2]{BBF} in the Nielsen-Thurston
decomposition of $\Pi(\gamma)$ in $\mathcal{MCG}_0^n$  we have either not
chiral elements, or chiral elements which can be divided into inessential
equivalence classes. Hence we can write that
$$
\Pi(\gamma) = \Pi(c_1)\ldots \Pi(c_m) \Pi(h_1) \ldots \Pi(h_n),$$
where $\Pi(c_i)$ are not chiral in
$\mathcal{MCG}_0^n$ and $\Pi(h_i)$ are products of elements from inessential
class in $\mathcal{MCG}_0^n$. As before, $\Pi(\gamma)$ generates a bounded
subgroup in $\mathcal{MCG}_0^n$ and since $\Delta^4=1_{B_n(S^2)}$ and
$\Delta^2$ is central, $\gamma$ generates a bounded subgroup in $B_n(S^2)$.
\end{proof}

\subsection{The bq-dichotomy for nilpotent groups}
Let us recall that a group $G$ is said to be boundedly
generated if there are cyclic subgroups $C_1,\ldots,C_n$
of $G$ such that $G=C_1\dots C_n$. It is known that
a finitely generated nilpotent group
has bounded generation \cite{MR1444027}.
In the proof below we shall use
a trivial observation that if group is boundedly
generated by bounded cyclic subgroups then it is bounded.

\begin{theorem}\label{T:nilpotent}
Let $N$ be a finitely generated nilpotent group.
Then the commutator subgroup $[N,N]$ is bounded in $N$.
Consequently, $N$ satisfies the bq-di\-cho\-to\-my.
\end{theorem}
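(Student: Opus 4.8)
The plan is to prove the stronger assertion that $[N,N]$ is bounded in $N$ by induction on the nilpotency class $c$ of $N$, and then to deduce the bq-dichotomy from the fact that $N/[N,N]$ is finitely generated abelian. When $c\le 1$ the group is abelian and $[N,N]$ is trivial, so I would assume $c\ge 2$ and set $Z:=\gamma_c(N)$, the last nontrivial term of the lower central series; it is central, finitely generated, and contained in $[N,N]$. I would then pass to the quotient $\pi\colon N\to \bar N:=N/Z$, which has class $c-1$, so that by induction $[\bar N,\bar N]=\pi([N,N])$ is bounded in $\bar N$. The argument then splits into two halves: showing that the central subgroup $Z$ is itself bounded in $N$, and lifting boundedness from $\bar N$ back up to $N$.

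For the first half, I would use that $Z=[\gamma_{c-1}(N),N]$ is generated by commutators $[z,g]$ with $z\in\gamma_{c-1}(N)$ and $g\in N$. The key point is that the correction term $({}^{g}z)\,z^{-1}=[g,z]$ lies in $\gamma_c(N)=Z$ and is therefore central, so $z$ commutes with ${}^{g}z$; this is exactly the defining relation of $\Gamma=\langle x,t\mid [x,{}^t\!x]=1\rangle$. Hence $x\mapsto z$, $t\mapsto g$ defines a homomorphism $\Psi\colon\Gamma\to N$ with $\Psi([x,t])=[z,g]$, and since homomorphisms between normally finitely generated groups are Lipschitz for the biinvariant metrics, Lemma \ref{L:bounded} shows that $\langle [z,g]\rangle$ is bounded in $N$. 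As $Z$ is finitely generated abelian I would pick a finite generating set consisting of such commutators; the observation that a group boundedly generated by bounded cyclic subgroups is bounded then yields that $Z$ is bounded in $N$.

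For the second half, given $h\in[N,N]$ I would write $\pi(h)$ as a product of at most $B$ conjugates of generators of $\bar N$, where $B$ bounds $[\bar N,\bar N]$, and lift each such conjugate to a conjugate of a generator of $N$ (both the generators and the conjugating elements lift through $\pi$). The lifted product agrees with $h$ modulo $Z$, so $h=s_1\cdots s_k z$ with $z\in Z$ and $k\le B$; combined with the boundedness of $Z$ this gives a uniform estimate $\|h\|\le B+\|z\|$, so $[N,N]$ is bounded. To conclude the bq-dichotomy I would pass to $N^{\mathrm{ab}}$: if the image of $g$ has infinite order there is a homomorphism $N\to\B R$ (a homogeneous quasimorphism) nonzero on $g$, and otherwise $g^m\in[N,N]$ for some $m\ge 1$, so $\langle g\rangle$ is bounded.

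The step I expect to be the main obstacle is the interface between the two halves, namely arranging the induction so that boundedness genuinely lifts through the central quotient. This is what forces the specific choice $Z=\gamma_c(N)$ rather than an arbitrary central subgroup: it is precisely the centrality of the correction term $[g,z]$ that reproduces the relation of $\Gamma$ and lets Lemma \ref{L:bounded} apply, and it requires the lifting of conjugates in the second half to be carried out without inflating word length, so that the bound coming from $\bar N$ survives intact.
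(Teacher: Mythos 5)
Your proof is correct and follows essentially the same route as the paper: both isolate the last nontrivial (central) term of the lower central series, bound it via the identity $[z,g]^n=[z^n,g]$ (the paper invokes this as a direct calculation rather than through a homomorphism from $\Gamma$ and Lemma \ref{L:bounded}) together with Baer's finite generation of the terms and bounded generation of finitely generated abelian groups, and then lift boundedness through the central quotient by induction. The only organizational differences are that the paper inducts downward along the series inside a fixed $N$ and lifts via Lemma \ref{L:bb} plus bounded generation, whereas you induct on the nilpotency class and lift by a direct word-length estimate, and you make explicit the deduction of the bq-dichotomy from the abelianization, which the paper leaves implicit.
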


In the proof of the theorem we will use the following
observation. Its straightforward proof is left to the reader.
\begin{lemma}\label{L:bb}
Let\/ $K\lhd H< G$ be a sequence of groups such that\/
$K$ is normal in $G$. If\/ $K$
is bounded in $G$ and every cyclic subgroup of\/ $H/K$ is bounded
in $G/K$ then every cyclic subgroup of\/ $H$ is bounded in $G$.
\end{lemma}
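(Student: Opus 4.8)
The plan is to bound $\|h^n\|_G$ uniformly in $n$ for an arbitrary $h\in H$, where $\|\cdot\|_G$ denotes the biinvariant word norm on $G$ associated with a finite symmetric generating set $S$ and its conjugation closure $\overline{S}$, and $\|\cdot\|_{G/K}$ the corresponding norm on $G/K$ coming from $\pi(S)$, with $\pi\colon G\to G/K$ the quotient homomorphism. I would rely on two facts: first, that $\pi$ is surjective and hence carries the conjugation-invariant generating set $\overline{S}$ onto the conjugation-invariant generating set $\overline{\pi(S)}=\pi(\overline{S})$ of $G/K$; second, that the norm is subadditive and conjugation invariant.

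Fix $h\in H$ and set $\bar h=\pi(h)\in H/K$. By the second hypothesis the cyclic subgroup $\langle\bar h\rangle$ is bounded in $G/K$, so there is a constant $M$ with $\|\bar h^n\|_{G/K}\le M$ for all $n$. For each $n$ I would write $\bar h^n=\bar t_1\cdots\bar t_m$ with $m\le M$ and each $\bar t_i\in\overline{\pi(S)}$. Using surjectivity I pick $\tau_i\in\overline{S}$ with $\pi(\tau_i)=\bar t_i$ and set $w_n=\tau_1\cdots\tau_m$. Then $\pi(w_n)=\bar h^n$, and since each $\tau_i$ is a single conjugation-invariant generator, subadditivity gives $\|w_n\|_G\le m\le M$.

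Now $h^n$ and $w_n$ have the same image under $\pi$, so $k_n:=w_n^{-1}h^n$ lies in $K$. The first hypothesis provides a constant $C$ with $\|k\|_G\le C$ for every $k\in K$, whence
$$
\|h^n\|_G=\|w_n k_n\|_G\le\|w_n\|_G+\|k_n\|_G\le M+C.
$$
As this bound is independent of $n$, the cyclic subgroup $\langle h\rangle$ is bounded in $G$, which is the assertion.

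The only point that requires care—and the step I expect to be the main, if modest, obstacle—is the lifting carried out in the second paragraph: one must verify that a short expression of $\bar h^n$ in the conjugation-invariant generators of $G/K$ lifts letter by letter to an expression of the same length in $G$, which amounts to the identity $\pi(\overline{S})=\overline{\pi(S)}$ for the surjection $\pi$. This is exactly what converts the boundedness hypothesis on the quotient into a genuine upper bound in $G$; the remainder is the triangle inequality together with the boundedness of $K$.
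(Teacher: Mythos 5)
Your proposal is correct; the paper explicitly leaves the proof of this lemma to the reader, and your argument---lifting a length-$\le M$ expression of $\bar h^{\,n}$ in $\overline{\pi(S)}$ letter by letter to $\overline{S}$ (using that $\pi(\overline{S})=\overline{\pi(S)}$ for the surjection $\pi$, which indeed holds) and absorbing the discrepancy $w_n^{-1}h^n\in K$ into the uniform bound on $K$---is the natural and surely intended one. The one step you flag as delicate is fine, so there is no gap.
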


\begin{proof}[Proof of Theorem \ref{T:nilpotent}]
Let $N_i\subset N$ be the lower central series. That is
$N_0=N$, $N_1=[N,N]$ and $N_{i+1}=[N,N_i]$. Since $N$
is nilpotent $N_i=0$ for $i>k$ and the last nontrivial
term $N_k$ is central.

Observe first that $N_k$ is bounded in $N$.
Let $x\in N$ and let $y\in N_{k-1}$. Then
$z=[x,y]\in N_k$ is central and a direct calculation shows that
$z^n=[x^n,y]$. Since $N$ is finitely generated,
we know that all $N_i$ are finitely generated as well,
according to  Baer \cite[page 232]{MR0080089}.
Now we have that $N_k$ is finitely generated by
products of commutators of the above form and,
since $N_k$ is abelian, these elements generate
bounded (in $N$) cyclic subgroups. This implies that
$N_k$ is bounded in $N$ as claimed.

The quotient series $N_i/N_k$ is the lower central series
for $N/N_k$ and by the same argument as above we obtain
that $N_{k-1}/N_k$ is bounded in $N/N_k$. Applying
Lemma \ref{L:bb} to the diagram
$$
\xymatrix
{
N_k     \ar[d]\ar[r] & N_k \ar[d]\\
N_{k-1} \ar[d]\ar[r] & N   \ar[d]\\
N_{k-1}/N_k   \ar[r] & N/N_k
}
$$
we get that every cyclic subgroup in
$N_{k-1}$ is bounded in $N$. Again, this
implies that $N_{k-1}$ is bounded in $N$.
Repeating this argument for $N/N_{k-1}$ we obtain
that $N_{k-2}$ is bounded in $N$.
The statement follows by induction.
\end{proof}

\subsection{The bq-dichotomy for solvable groups}

\begin{theorem}\label{T:solvable}
Let\/ $G$ be a finitely generated solvable group
such that its commutator subgroup is finitely
generated and nilpotent.
Then the commutator subgroup $[G,G]$ is bounded in $G$.
Consequently, it satisfies the bq-dichotomy.
\end{theorem}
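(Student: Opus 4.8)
The plan is to reduce to the metabelian case by induction on the nilpotency class $c$ of $N:=[G,G]$, peeling off the lower central series of $N$ one term at a time with Lemma~\ref{L:bb}, and then to settle the metabelian case by splitting $N$ into the ``difference subgroup'' $[G,N]$ and a complementary central quotient.

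First I would set up the downward reduction. Let $\gamma_1(N)=N\supseteq\gamma_2(N)\supseteq\cdots\supseteq\gamma_{c+1}(N)=1$ be the lower central series of $N$; each term is characteristic in $N$, hence normal in $G$. If $c\geq 2$, the last nontrivial term $\gamma_c(N)$ is central in $N$ and is generated by commutators $z=[v,u]$ with $v\in\gamma_{c-1}(N)$ and $u\in N$. Exactly as in the proof of Theorem~\ref{T:nilpotent}, centrality of $z$ gives $z^n=[v^n,u]={}^{v^n}\!u\cdot u^{-1}$, so $\|z^n\|\leq 2\|u\|$ and each such $z$ generates a bounded cyclic subgroup of $G$; since $N$ is finitely generated nilpotent, $\gamma_c(N)$ is finitely generated and therefore bounded in $G$. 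The quotient $G/\gamma_c(N)$ again satisfies the hypotheses, its commutator subgroup $N/\gamma_c(N)$ having class $c-1$, so by induction $N/\gamma_c(N)$ is bounded in $G/\gamma_c(N)$. Applying Lemma~\ref{L:bb} with $K=\gamma_c(N)$ and $H=N$ shows that every cyclic subgroup of $N$ is bounded in $G$, and bounded generation of finitely generated nilpotent groups (used already in the proof of Theorem~\ref{T:nilpotent}) then forces $N$ to be bounded in $G$. This leaves the base case $c=1$, i.e. $G$ metabelian.

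For the metabelian case I would put $K:=[G,N]=[G,[G,G]]$, a characteristic (hence normal) subgroup of $G$ contained in the finitely generated abelian group $N$, and therefore itself finitely generated. Because $N$ is abelian, for every $v\in N$ and $g\in G$ the conjugate ${}^g\!v$ again lies in $N$ and commutes with $v$, so $[v,{}^g\!v]=1$; thus $x\mapsto v$, $t\mapsto g$ defines a homomorphism $\Gamma\to G$ and Lemma~\ref{L:bounded} shows that $[g,v]$ generates a bounded cyclic subgroup. Since $K$ is finitely generated and generated by such commutators, finitely many of them generate it, so $K$ is boundedly generated by bounded cyclic subgroups and hence bounded in $G$. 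On the other hand $N/K=[G/K,G/K]$ is central in $G/K$, so $G/K$ is finitely generated nilpotent of class at most two, and Theorem~\ref{T:nilpotent} gives that $N/K$ is bounded in $G/K$. Lemma~\ref{L:bb} with $K$ and $H=N$ now yields that every cyclic subgroup of $N$ is bounded in $G$, and since $N$ is finitely generated abelian it is itself bounded, completing the metabelian case.

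The hard part is precisely this metabelian step, and within it the quotient $N/[G,N]$: the elementary commutator trick controls only ``differences'' $v\cdot({}^g\!v)^{-1}$, that is, the subgroup $[G,N]$, and says nothing about the coinvariants $N/[G,N]$. The observation that unlocks it is that modulo $[G,N]$ the commutator subgroup becomes \emph{central}, reducing the recalcitrant part to the already-established nilpotent case. Finally, for the bq-dichotomy: given $g\in G$, either its image in the finitely generated abelian group $G/[G,G]$ has infinite order, in which case a homomorphism $G\to G/[G,G]\to\B Z\hookrightarrow\B R$ is a homogeneous quasimorphism not vanishing on $g$, or some power $g^m$ lies in the bounded subgroup $[G,G]$, whence $\langle g\rangle$ is bounded.
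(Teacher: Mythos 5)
Your proof is correct and follows essentially the same route as the paper: the metabelian core is identical (bound $[G,[G,G]]$ via Lemma \ref{L:bounded} and the abelianness of $[G,G]$, note that $G/[G,[G,G]]$ is finitely generated nilpotent so Theorem \ref{T:nilpotent} applies, then glue with Lemma \ref{L:bb} and bounded generation), and your closing derivation of the bq-dichotomy from boundedness of $[G,G]$ is the standard one the paper leaves implicit. The only organizational difference is in the reduction to the metabelian case: you induct on the nilpotency class of $[G,G]$, peeling off its lower central series term by term, whereas the paper applies Theorem \ref{T:nilpotent} once to the finitely generated nilpotent group $[G,G]$ to conclude that $[[G,G],[G,G]]$ is already bounded and then passes directly to the metabelian quotient; both reductions are valid.
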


\begin{proof}
Let us first proof the statement under an additional assumption
that $G$ is metabelian (hence $[G,G]$ is a finitely
generated abelian group).
Let $x,y,t\in G$ and consider the element
$[t,[x,y]]\in [G,[G,G]]$. Observe that
it generates a bounded subgroup in $G$ because
$[x,y]$ commutes with ${}^t[x,y]$ and we
can apply Lemma \ref{L:bounded}.
Since the subgroup $[G,[G,G]]\subset [G,G]$ is finitely generated
abelian, it is boundedly generated by cyclic subgroups
bounded in $G$ and hence $[G,[G,G]]$ is bounded
in $G$.

Consider the following diagram.
$$
\xymatrix
{
G_2= [G,[G,G]] \ar[d]\ar[r] & [G,[G,G]]\ar[d]\\
G_1= [G,G]     \ar[d]\ar[r] & G\ar[d]\\
G_1/G_2        \ar[r]       & G/G_2
}
$$
Since $[G/G_2,[G/G_2,G/G_2]]$ is trivial,
$G/G_2$ is metabelian and nilpotent. Hence,
due to Theorem \ref{T:nilpotent}, we get that
$G_1/G_2=[G/G_2,G/G_2]$ is bounded in $G/G_2$.
It then follows from  Lemma~\ref{L:bb} that
$[G,G]$ is bounded in $G$.

Let us prove the statement for a general $G$. Since the commutator
subgroup $G^1=[G,G]$ is finitely generated and nilpotent
we have, according to Theorem \ref{T:nilpotent}, that
$G^2=[G^1,G^1]$ is bounded in $G_1$ and hence in $G$.
$$
\xymatrix
{
G^2= [G^1,G^1] \ar[d]\ar[r] & [G^1,G^1]\ar[d]\\
G^1= [G,G]     \ar[d]\ar[r] & G\ar[d]\\
G^1/G^2        \ar[r]       & G/G^2
}
$$
Since $G/G^2$ is metabelian and
$G^1/G^2=[G/G^2,G/G^2]$
is finitely generated (because $G^1$ is)
we have that $G^1/G^2$ is bounded in $G/G^2$, due
to the first part of the proof. Again,
by Lemma \ref{L:bb}, we get that $[G,G]$ is bounded
in $G$ as claimed.
\end{proof}

\begin{remark}
The integer lamplighter group $G=\B Z\wr \B Z$ is solvable
and finitely generated but its commutator subgroup is
abelian of infinite rank. The proof still works in this
case because we have that $[G,[G,G]]=[G,G]$. We also showed
it directly in Example \ref{E:bounded} (3).
\end{remark}

\subsection{The bq-dichotomy for graph of groups}
For an introduction to graph of groups see
Serre \cite{MR1954121}.

\begin{lemma} \label{L:g-not-conjugate}
Let\/ $\B{A}$ be a graph of groups and let\/ $G_{\B{A}}$ be its
fundamental group.  Assume tha\/t $g \in G_{\B{A}}$ is not conjugate to an
element of the vertex group.  Then $g$ is either detectable by a homogenous
quasimorphism or $\langle g \rangle$ is bounded with respect to
the conjugation invariant norm.
\end{lemma}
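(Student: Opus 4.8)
The plan is to use the action of $G_{\B A}$ on its Bass--Serre tree $T$. By Bass--Serre theory an element of $G_{\B A}$ fixes a point of $T$ if and only if it is conjugate into a vertex group; since by hypothesis $g$ is not conjugate into any vertex group, it acts on $T$ without a fixed point, hence as a hyperbolic isometry with an axis $L$ and positive translation length. As $T$ is a tree it is a $\OP{CAT}(0)$ (indeed $0$-hyperbolic) space containing no flat half-plane, so $L$ is a contracting geodesic and $g$ is automatically a rank one isometry. This puts us in exactly the situation of Lemmas \ref{L:b-f} and \ref{L:not-tilda}, and the proof will follow the dichotomy used in Case 1 of the proof of Theorem \ref{T:coxeter}.

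I would split according to whether some positive power of $g$ is conjugate to some positive power of $g^{-1}$. If no such conjugacy exists, Lemma \ref{L:b-f} applies to the rank one isometry $g$ and yields a homogeneous quasimorphism $q\colon G_{\B A}\to\B R$ that is nontrivial on $\langle g\rangle$; thus $g$ is detected by a quasimorphism.

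In the opposite case there exist $x\in G_{\B A}$ and positive integers $m,n$ with $xg^mx^{-1}=g^{-n}$. Since translation length is a conjugacy invariant and $g$ is hyperbolic with translation length $\ell>0$, the left side has translation length $m\ell$ and the right side $n\ell$, forcing $m=n$ (this is the tree analogue of Lemma \ref{L:not-tilda} and requires no properness). Hence $xg^mx^{-1}=g^{-m}$, and I would define $\Psi\colon\Gamma(m,-m)\to G_{\B A}$ on generators by $\Psi(x_0)=\Psi(x_1)=g$ and $\Psi(t_1)=x$. The defining relation $({}^{t_1}\!x_0)^m=x_1^{-m}$ holds precisely because $xg^mx^{-1}=g^{-m}$, and $[x_0,x_1]=e$ maps to $[g,g]=e$, so $\Psi$ is well defined. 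By Lemma \ref{L:gen-kedra} the element $x_0x_1$ generates a bounded cyclic subgroup of $\Gamma(m,-m)$; since homomorphisms out of a finitely generated group are Lipschitz for conjugation invariant norms, its image $\Psi(x_0x_1)=g^2$ generates a bounded subgroup of $G_{\B A}$, and therefore so does $g$.

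The step I expect to be the main obstacle is the appeal to Lemma \ref{L:b-f}, because the Bass--Serre tree $T$ is typically not locally finite (edge and vertex groups may have infinite index) and hence need not be proper, whereas Lemma \ref{L:b-f} is stated for proper spaces. To bridge this I would either check that the Bestvina--Fujiwara construction behind Lemma \ref{L:b-f} uses only the strong contraction of the axis in a $0$-hyperbolic space together with a weak properness of $g$ along $L$, which holds here, or replace it by a direct Brooks-type quasimorphism: fix an oriented fundamental segment $\sigma\subset L$ for $g$ and count, up to bounded error, the signed number of translates of $\sigma$ traversed by geodesics in $T$, with the non-reversal hypothesis ensuring the homogenization is nonzero on $g$. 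The remaining steps are a routine transcription of the arguments already carried out for Coxeter groups.
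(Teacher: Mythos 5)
Your proof follows the paper's argument essentially verbatim: act on the Bass--Serre tree, note that $g$ fixes no vertex because it is not conjugate into a vertex group, conclude it is a hyperbolic and hence rank-one isometry, and apply Lemma \ref{L:b-f} to get the dichotomy, handling the reversible case exactly as in Case~1 of the proof of Theorem \ref{T:coxeter} via $\Gamma(m,-m)$ and Lemma \ref{L:gen-kedra}. Your concern about properness of the (generally non-locally-finite) Bass--Serre tree is a fair point that the paper's one-line proof silently glosses over, and your proposed fixes are the right way to bridge it.
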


\begin{proof}
Consider the action of $G_{\mathbf{A}}$ on the Bass-Serre tree $T_{\mathbf{A}}$.
The action of $g$ on $T_{\mathbf{A}}$ does not have a fixpoint, for
$G_{\mathbf{A}}$ acts on $T_{\mathbf{A}}$ without edge inversions and
the stabilizers of vertices are conjugate to the vertex groups.  Thus $g$ acts by a
hyperbolic isometry and it is automatically of rank-one. By Lemma \ref{L:b-f}, $g$
is either detectable by a homogeneous quasimorphism, or it is conjugate to
$g^{-1}$, hence $\langle g \rangle$ is bounded.
\end{proof}

Let $H\subset G$ be a subgroup. We say that $H$ satisfies
the {\em relative bq-dichotomy (with respect to $G$)} if every
cyclic subgroup of $H$ is either bounded in $G$ or
it is detected by a homogeneous quasimorphism $q\colon G\to \B R$.
The following result is a
straightforward application of the above lemma.

\begin{theorem} \label{T:graph-of-gps}
Let\/ $\mathbf{A}$ be a graph of groups and let\/ $G_{\mathbf{A}}$ be its fundamental group.
If each vertex subgroup of\/ $G_{\B A}$ satisfies the relative
bq-dichotomy then $G_{\B A}$ satisfies the bq-dichotomy.\qed
\end{theorem}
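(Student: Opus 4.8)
The plan is to reduce the bq-dichotomy for $G_{\B A}$ to the relative bq-dichotomy of its vertex subgroups by exploiting the dichotomy already established in Lemma \ref{L:g-not-conjugate}. Let $g\in G_{\B A}$ be an arbitrary element and consider the cyclic subgroup $\langle g\rangle$. The action of $G_{\B A}$ on the Bass-Serre tree $T_{\B A}$ gives a clean trichotomy for how $g$ sits in the group, so the first step is to split into the two geometric cases: either $g$ acts hyperbolically on $T_{\B A}$ (equivalently, $g$ is not conjugate into any vertex group), or $g$ fixes a vertex and hence is conjugate to an element of some vertex subgroup.

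In the first case I would simply invoke Lemma \ref{L:g-not-conjugate} directly: it tells us that $g$ is either detected by a homogeneous quasimorphism $q\colon G_{\B A}\to\B R$ or else $\langle g\rangle$ is bounded in $G_{\B A}$. That is exactly the bq-dichotomy for $g$, so nothing further is needed here. In the second case, write $g = h\,g_0\,h^{-1}$ where $g_0$ lies in a vertex subgroup $G_v$. Because boundedness and the existence of a detecting homogeneous quasimorphism are both conjugation-invariant properties (homogeneous quasimorphisms are class functions, and the biinvariant norm is conjugation-invariant), it suffices to settle the dichotomy for $g_0$ itself. Now apply the hypothesis: $G_v$ satisfies the \emph{relative} bq-dichotomy with respect to $G_{\B A}$, which means precisely that every cyclic subgroup of $G_v$ is either bounded in $G_{\B A}$ or detected by a homogeneous quasimorphism defined on all of $G_{\B A}$. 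Applying this to $\langle g_0\rangle$ closes the second case, and combining the two cases yields the bq-dichotomy for $G_{\B A}$.

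The key subtlety, and the reason the \emph{relative} version of the dichotomy is the right hypothesis rather than the absolute one, is that detection must happen at the level of the ambient group $G_{\B A}$: a quasimorphism on $G_v$ that merely detects $g_0$ would be useless unless it extends to $G_{\B A}$, and likewise boundedness must be measured in the norm on $G_{\B A}$ rather than in any intrinsic norm on $G_v$. Phrasing the hypothesis relatively is what makes the conjugation step and the passage from $g_0$ back to $g$ legitimate, since both the extended quasimorphism and the ambient boundedness transport freely under conjugation in $G_{\B A}$. This is the only point requiring genuine care; once it is in place the argument is a two-line case split, which is why the statement is marked \qed and the proof is omitted in the excerpt.
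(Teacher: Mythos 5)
Your proof is correct and follows exactly the argument the paper intends: split on whether $g$ is conjugate into a vertex group, apply Lemma \ref{L:g-not-conjugate} in the hyperbolic case, and use conjugation-invariance together with the relative bq-dichotomy hypothesis in the elliptic case. This is precisely why the paper marks the theorem as a straightforward application of the lemma and omits the proof.
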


\begin{example}\label{E:bs}
Baumslag-Solitar groups satisfy the bq-dichotomy.
Indeed, Baumslag-Solitar groups are HNN extensions of the infinite cyclic group $\B Z$.
The graph of groups in this case has one vertex and one edge.
By virtue of Example $\ref{E:bounded}$ (1) the vertex group is bounded and
we can apply Theorem \ref{T:graph-of-gps}.
\end{example}

\begin{example}
The groups $\Gamma(\overline{m})$ (defined in Lemma \ref{L:gen-kedra})
satisfy the bq-dichotomy.
We keep the notation from Lemma \ref{L:gen-kedra}.
The group $\Gamma(\bar{m})$ is the fundamental group of the graph of
groups associated with a rose with $k$ petels. The vertex group is $\B Z^{k+1}$
generated by $x_0,\ldots,x_k$ and the edge groups are cyclic generated
by $x_i^{m_i}$. The elements $x_i$ are detected by a homomorphism
$h \colon \Gamma(\bar{m}) \to \B Z$ defined by $h(t_i) = 0$ and
$h(x_i) = a_i$.
The kernel of this homomorphism is bounded, according to
Lemma \ref{L:gen-kedra}. Consequently,
the bq-dichotomy follows from Theorem \ref{T:graph-of-gps}.
\end{example}

\subsection*{Acknowledgments}
The first author wishes to express his gratitude to Max Planck Institute for
Mathematics in Bonn for the support and excellent working conditions.
He was supported by the Max Planck Institute research grant.
We thank \'Etienne Ghys, \L ukasz Grabowski, Karol Konaszy\'nski and Peter Kropholler
for helpful discussions and the referee for comments and spotting an incorrect
example.

University of Aberdeen supported the visit of MB, SG and MM in Aberdeen
during which a part of the paper was developed.

\section{Erratum: Proposition \ref{P:balanced}}\label{S:erratum}

\hfill{\bf March 2023}

The proof of Proposition \ref{P:balanced} is incomplete.
Namely, the equality
$$
|xr^{-1}ry|_{\times} = |xy|_{\times}
$$
is not justified. Here, we present the proof of the following special case.

\noindent
{\bf Proposition 2.E'.}
{\it
Let $G=\langle S\ |\ R\rangle$
be either a right-angled Artin group or a Coxeter group.
Let $v$ and $w$ be two words in the alphabet $S$ representing
the same element $g\in G$. Then $|v|_{\times}=|w|_{\times}$.
In particular, the cancellation norm of $g$ is equal to the
cancellation length of any word representing $g$.
}

\begin{proof}
Let $G=\langle S\ |\ R\rangle$ be a Coxeter group and let $g\in G$.
Dyer \cite{MR1838781} proved that $\|g\| = |v|_{\times}$, where
$v$ is a shortest reduced word representing $g$. Let $w$ be any word
representing $g$. It follows from \cite[Theorem 3.4.2]{MR2360474}
that $v$ can be obtained from $w$ by a sequence of the following
operations:
\begin{itemize}
\item deleting a subword of the form $ss$;
\item replacing alternating subword $sts\cdots$ of length $m_{st}$
by the alternating word $tst\cdots$ of the same length.
\end{itemize}
The first operation does not decrease the cancellation length. That
is, $|xssy|_{\times} \leq |xy|_{\times}$ for any words $x,y$. The
second operation does not change the cancellation length since
the relation $sts\cdots = tst\cdots$ is balanced. Thus in the process
of reduction from $w$ to $v$ the cancellation length cannot
decrease. Since $|v|_{\times}$ is minimal among all words representing
$g$, we get that $|w|_{\times}=|v|_{\times}$. This shows the statement
for Coxeter groups.

Let $G=\langle S\ |\ R\rangle$ be a right-angled Artin group.
Let $g\in G$ and let $v$ and $w$ be two words representing $g$.
They are related by a sequence of the following operations
\begin{itemize}
\item deleting or inserting a subword of the form $ss^{-1}$;
\item replacing a subword $st$ by the subword $ts$.
\end{itemize}
Since the relation $st=ts$ is balanced, the second operation does not change
the cancellation length. As before, we have that $|xss^{-1}y|_{\times} \leq
|xy|_{\times}$ for any words $x,y$ and any letter $s\in S$.  If a minimal
cancellation sequence of $xss^{-1}y$ does not involve the displayed $s$ or
$s^{-1}$ then $|xss^{-1}y|_{\times}=|xy|_{\times}$.  Suppose that a minimal
cancellation sequence for $xss^{-1}y$ contains $s$ (it can't contain both $s$
and $s^{-1}$ due to its minimality). After deleting the cancelling sequence
we obtain a word $x's^{-1}y'$ representing the trivial element.
Since the process of reduction of $x's^{-1}y'$ involves only replacing
$ab$ with $ba$ or deleting $aa^{-1}$ \cite{MR3587227,MR2322545}, 
there is a letter $s$ either in $x'$
or in $y'$ with which the displayed $s^{-1}$ will cancel. Swapping the
initial $s$ in the cancellation sequence for the $s$ in either $x'$ or $y'$
yield another minimal cancellation sequence that does not involve the
initial $ss^{-1}$. This shows that $|v|_{\times}=|w|_{\times}$ and
finishes the proof.
\end{proof}

\bibliography{bibliography}

\begin{thebibliography}{10}

\bibitem{AD}
{\sc Arzhantseva, G., and Drutu, C.}
\newblock Geometry of infinitely presented small cancellation groups, rapid
  decay and quasi-homomorphisms.
\newblock {\em arXiv:1212.5280\/}.

\bibitem{Ba}
{\sc Bavard, C.}
\newblock Longueur stable des commutateurs.
\newblock {\em Enseign. Math. (2) 37}, 1--2 (1991), 109--150.

\bibitem{MR2874959}
{\sc Behrstock, J., and Charney, R.}
\newblock Divergence and quasimorphisms of right-angled {A}rtin groups.
\newblock {\em Math. Ann. 352}, 2 (2012), 339--356.

\bibitem{MR3587227}
{\sc Bell, R.~W., and Clay, M.}
\newblock Right-angled {A}rtin groups.
\newblock In {\em Office hours with a geometric group theorist}. Princeton
  Univ. Press, Princeton, NJ, 2017, pp.~291--309.

\bibitem{BBF}
{\sc Bestvina, M.;~Bromberg, K., and Fujiwara, K.}
\newblock Stable commutator length on mapping class groups.
\newblock {\em arXiv:1306.2394\/}.

\bibitem{MR1914565}
{\sc Bestvina, M., and Fujiwara, K.}
\newblock Bounded cohomology of subgroups of mapping class groups.
\newblock {\em Geom. Topol. 6\/} (2002), 69--89 (electronic).

\bibitem{MR2507218}
{\sc Bestvina, M., and Fujiwara, K.}
\newblock A characterization of higher rank symmetric spaces via bounded
  cohomology.
\newblock {\em Geom. Funct. Anal. 19}, 1 (2009), 11--40.

\bibitem{Bir}
{\sc Birman, J.}
\newblock Mapping class groups and their relationship to braid groups.
\newblock {\em Comm. Pure Appl. Math. 22\/} (1969), 213--238.

\bibitem{MR2900661}
{\sc Bou-Rabee, K., and Hadari, A.}
\newblock Simple closed curves, word length, and nilpotent quotients of free
  groups.
\newblock {\em Pacific J. Math. 254}, 1 (2011), 67--72.

\bibitem{B}
{\sc Brandenbursky, M.}
\newblock Bi-invariant metrics and quasi-morphisms on groups of hamiltonian
  diffeomorphisms of surfaces.
\newblock {\em arXiv:1306.3350\/}.

\bibitem{BK}
{\sc Brandenbursky, M., and K\k{e}dra, J.}
\newblock On the autonomous metric on the group of area-preserving
  diffeomorphisms of the 2-disc.
\newblock {\em Algebraic \& Geometric Topology 13\/} (2013), 795--816.

\bibitem{Brooks}
{\sc Brooks, R.}
\newblock Some remarks on bounded cohomology.
\newblock {\em Ann. of Math. Studies 97\/} (1981), 53--63.

\bibitem{BIP}
{\sc Burago, D.;~Ivanov, S., and Polterovich, L.}
\newblock Conjugation-invariant norms on groups of geometric origin.
\newblock {\em Groups of Diffeomorphisms 52\/} (2008), 221--250.

\bibitem{MR2390536}
{\sc Calegari, D.}
\newblock Word length in surface groups with characteristic generating sets.
\newblock {\em Proc. Amer. Math. Soc. 136}, 7 (2008), 2631--2637.

\bibitem{Calegari}
{\sc Calegari, D.}
\newblock {\em scl}, vol.~20 of {\em MSJ Memoirs}.
\newblock Mathematical Society of Japan, Tokyo, 2009.

\bibitem{MR2866929}
{\sc Calegari, D., and Zhuang, D.}
\newblock Stable {$W$}-length.
\newblock In {\em Topology and geometry in dimension three}, vol.~560 of {\em
  Contemp. Math.} Amer. Math. Soc., Providence, RI, 2011, pp.~145--169.

\bibitem{MR2585575}
{\sc Caprace, P.-E., and Fujiwara, K.}
\newblock Rank-one isometries of buildings and quasi-morphisms of {K}ac-{M}oody
  groups.
\newblock {\em Geom. Funct. Anal. 19}, 5 (2010), 1296--1319.

\bibitem{MR2322545}
{\sc Charney, R.}
\newblock An introduction to right-angled {A}rtin groups.
\newblock {\em Geom. Dedicata 125\/} (2007), 141--158.


\bibitem{MR2360474}
{\sc Davis, M.~W.}
\newblock {\em The geometry and topology of {C}oxeter groups}, vol.~32 of {\em
  London Mathematical Society Monographs Series}.
\newblock Princeton University Press, Princeton, NJ, 2008.

\bibitem{MR1838781}
{\sc Dyer, M.~J.}
\newblock On minimal lengths of expressions of {C}oxeter group elements as
  products of reflections.
\newblock {\em Proc. Amer. Math. Soc. 129}, 9 (2001), 2591--2595 (electronic).

\bibitem{EP}
{\sc Entov, M., and Polterovich, L.}
\newblock Calabi quasimorphism and quantum homology.
\newblock {\em Int. Math. Res. Not. 30\/} (2003), 1635--1676.

\bibitem{EF}
{\sc Epstein, D., and Fujiwara, K.}
\newblock The second bounded cohomology of word-hyperbolic groups.
\newblock {\em Topology 36\/} (1997), 1275--1289.

\bibitem{MR2819193}
{\sc Gal, {\'S}.~R., and K\k{e}dra, J.}
\newblock On bi-invariant word metrics.
\newblock {\em J. Topol. Anal. 3}, 2 (2011), 161--175.

\bibitem{GG}
{\sc Gambaudo, J.-M., and Ghys, E.}
\newblock Commutators and diffeomorphisms of surfaces.
\newblock {\em Ergodic Theory Dynam. Systems 24}, 5 (2004), 1591--1617.

\bibitem{Hofer}
{\sc Hofer, H.}
\newblock On the topological properties of symplectic maps.
\newblock {\em Proc. Roy. Soc. Edinburgh Sect. A 115}, 1--2 (1990), 25--38.

\bibitem{KV}
{\sc Kaabi, N., and Vershinin, V.}
\newblock On {V}assiliev invariants of braid groups of the sphere.
\newblock {\em arXiv:1202.3557\/}.

\bibitem{Kotschick}
{\sc Kotschick, D.}
\newblock Stable length in stable groups.
\newblock {\em Groups of Diffeomorphisms 52\/} (2008), 401--4113.

\bibitem{MR0080089}
{\sc Kurosh, A.~G.}
\newblock {\em The theory of groups. {V}ol. {II}}.
\newblock Chelsea Publishing Company, New York, N.Y., 1956.
\newblock Translated from the Russian and edited by K. A. Hirsch.

\bibitem{LM}
{\sc Lalonde, F., and McDuff, D.}
\newblock The geometry of symplectic energy.
\newblock {\em Annals of Mathematics 141}, 2 (1995), 349--371.

\bibitem{MR744962}
{\sc Liehl, B.}
\newblock Beschr\"ankte {W}ortl\"ange in {${\rm SL}_{2}$}.
\newblock {\em Math. Z. 186}, 4 (1984), 509--524.

\bibitem{M}
{\sc Magnus, W.}
\newblock $\ddot{U}$ber automorphismen von fundamentalgruppen berandeter
  fl$\ddot{a}$chen.
\newblock {\em Math. Ann. 109\/} (1934), 617--646.

\bibitem{Mar}
{\sc Marcinkowski, M.}
\newblock Programm for computing the biinvariant norm.
\newblock {\em
  http://www.math.uni.wroc.pl/$\sim$marcinkow/papers/program.biinv/biinv.length.v1.0.tar\/}.

\bibitem{MR2842917}
{\sc McCammond, J., and Petersen, T.~K.}
\newblock Bounding reflection length in an affine {C}oxeter group.
\newblock {\em J. Algebraic Combin. 34}, 4 (2011), 711--719.

\bibitem{MR2718128}
{\sc Muranov, A.}
\newblock Finitely generated infinite simple groups of infinite square width
  and vanishing stable commutator length.
\newblock {\em J. Topol. Anal. 2}, 3 (2010), 341--384.

\bibitem{PR}
{\sc Polterovich, L., and Rudnick, Z.}
\newblock Stable mixing for cat maps and quasi-morphisms of the modular group.
\newblock {\em Ergodic Theory Dynam. Systems 24}, 2 (2004), 609--619.

\bibitem{RZ}
{\sc Rolfsen, D., and Zhu, J.}
\newblock Braids, orderings and zero divisors.
\newblock {\em J. Knot Theory Ramifications 7}, 6 (1998), 837--841.

\bibitem{MR1954121}
{\sc Serre, J.-P.}
\newblock {\em Trees}.
\newblock Springer Monographs in Mathematics. Springer-Verlag, Berlin, 2003.
\newblock Translated from the French original by John Stillwell, Corrected 2nd
  printing of the 1980 English translation.

\bibitem{MR1444027}
{\sc Sury, B.}
\newblock Bounded generation does not imply finite presentation.
\newblock {\em Comm. Algebra 25}, 5 (1997), 1673--1683.

\end{thebibliography}
\bibliographystyle{acm}

\end{document}